\numberwithin{equation}{section} 
\newtheoremstyle{mystyle}
{}
{}
{\itshape}{}
{\bfseries}{.}
{4pt}{}
\theoremstyle{mystyle}
\newtheorem{Def}{Definition}[section]
\newtheorem{Thm}[Def]{Theorem}
\newtheorem{Lem}[Def]{Lemma}
\newtheorem{Cor}[Def]{Corollary}
\let\oldenumerate\enumerate
\renewcommand{\enumerate}{
\oldenumerate
\setlength{\itemsep}{1pt}
\setlength{\parskip}{2pt}
\setlength{\parsep}{0pt}
}
\let\olditemize\itemize
\renewcommand{\itemize}{
\olditemize
\setlength{\itemsep}{1pt}
\setlength{\parskip}{2pt}
\setlength{\parsep}{0pt}
}
\let\olddescription\description
\renewcommand{\description}{
\olddescription
\setlength{\itemsep}{1pt}
\setlength{\parskip}{2pt}
\setlength{\parsep}{0pt}
}
\newenvironment{bprooftree}
  {\leavevmode\hbox\bgroup}
  {\DisplayProof\egroup}
\newcommand{\negr}[1]{\,\sim\!\!#1}
\newcommand{\negl}[1]{-#1}
\newcommand{\negrl}[1]{\sim\!\!-#1} 
\newcommand{\cdott}[1]{\,\,\cdot^{\star}\!#1}  
\newcommand{\cdottt}[1]{\,\,\cdot\!#1} 
\newcommand{\negrr}[1]{\sim\sim\!\!#1} 
\newcommand{\neglr}[1]{-\!\!\sim\!#1} 
\newcommand{\darrowa}[1]{\,\downarrow_{\mathbf{A}}\!\!#1} \newcommand{\darrowgx}[1]{\,\downarrow_{G_X}\!\!#1} 
\newcommand{\uarrowgx}[1]{\uparrow_{G_X}\!\!#1} 
\newcommand{\Crit}[1]{\text{Crit}#1}
\newcommand{\dom}[1]{\mathop{\mathrm{dom}}\nolimits#1}
\newcommand{\N}[1]{\,\,N\,\,#1}
\newcommand{\R}[1]{\,\,R\,\,#1}
\newcommand{\Nh}[1]{\,\,N_h\,\,#1}
\newcommand{\Nb}[1]{\,\,N_B\,\,#1}
\newcommand{\Nz}[1]{\,\,N_{B^0}\,\,#1}
\newcommand{\id}{\mathop{\mathrm{id}}\nolimits}
\newcommand{\dbackslash}[1]{\backslash\!\!\backslash#1}
\newcommand{\dslash}[1]{/\!\!/#1}
\newcommand{\ifff}[1]{\quad \text{iff} \quad#1}
\newtheorem{LEM}{Lemma}
\begin{document}
\title{Decision Problems for Propositional Non-associative Linear Logic and Extensions}
\setcounter{footnote}{1}
\author{Hiromi Tanaka \thanks{Graduate School of Letters, Keio University, Tokyo, Japan.
		E-mail: \texttt{hiromitanaka@keio.jp}}
	}
\date{}
\maketitle
\begin{abstract}
\noindent 
In our previous work, we proposed the logic obtained from full non-associative Lambek calculus by adding a sort of linear-logical modality. 
We call this logic non-associative non-commutative intuitionistic linear logic ($\mathbf{NACILL}$, for short).  
In this paper, we establish the decidability and undecidability results for various extensions of $\mathbf{NACILL}$. 
Regarding the decidability results, we show that the deducibility problems for several extensions of $\mathbf{NACILL}$ with the rule of left-weakening are decidable. 
Regarding the undecidability results, we show that the provability problems for all the extensions of non-associative non-commutative classical linear logic by the rules of contraction and exchange are undecidable.
\end{abstract}
\section{Introduction} 
In the early period of linear logic, Lincoln-Mitchell-Scedrov-Shankar \cite{LMSS92} proved that the provability problems for propositional linear logic and propositional non-commutative linear logic are both undecidable.
In view of this result, it is natural to ask how the lack of associativity of multiplication affects the decision problems for linear logic and related systems. 
So far, however, it has hardly been investigated whether the decision problems for non-associative versions of propositional linear logic are decidable or not, whereas several substructural logicians investigated the decision problems for various non-associative logics, see e.g., \cite{BvA04,Bus16,Bus17,Ch15,Fa08,GJ13,HH14}.

Under these circumstances, in order to develop the work by Lincoln et al. in a non-associative setting, the author \cite{Tan19} proposed propositional non-associative non-commutative intuitionistic linear logic (denoted by $\mathbf{NACILL}$) and showed that all the extensions $\mathbf{NACILL}$ by the rules of contraction and exchange are undecidable. 
In this paper, as a continuation of our previous work, we advance further the program of settling the decision problems for non-associative versions of propositional linear logic. 
Our contribution is summarized as follows:
\begin{enumerate}
	\item Contrary to the undecidability results established in \cite{Tan19}, we show that the deducibility problems for several extensions of $\mathbf{NACILL}$ with the rule of left-weakening are decidable. 
	\item Also, we show that the provability problem for any of the extensions of propositional non-associative non-commutative classical linear logic ($\mathbf{NACCLL}^-$, for short) by the rules of contraction and exchange is undecidable. 
\end{enumerate}

For our first contribution, we show that various subvarieties of integral interior residuated lattice-ordered unital groupoids (integral interior $r \ell u$-groupoids, for short) have the finite embeddability property (FEP, for short). 
Here, an interior $r \ell u$-groupoid is just a residuated lattice-ordered unital groupoid ($r \ell u$-groupoid, for short) equipped with a conucleus as a fundamental operation. 
Actually, some subvarieties of interior $r \ell u$-groupoids form equivalent algebraic semantics for extensions of $\mathbf{NACILL}$. 
Our proof of the FEP for integral interior $r \ell u$-groupoids relies heavily on the techniques by Blok-van Alten \cite{BvA04} and Galatos-Jipsen \cite{GJ13}. 
Blok-van Alten showed that various subclasses of integral residuated partially-ordered groupoids possess the FEP. 
After that, Galatos-Jipsen showed that any of the subvarieties of integral $r \ell u$-groupoids axiomatized by equations consisting only of operation symbols from the language $\{\cdot,\lor,1\}$ has the FEP, using relational models for (non-associative) substructural logics, called residuated frames. 
As well as in these two approaches, the assumption of integrality is crucial in our setting. 
To apply their techniques to our proof, we introduce slightly extended versions of residuated frames, called enriched residuated frames. 

For our second contribution, we show that every $r \ell u$-groupoid is embeddable into the reduct of a cyclic bounded involutive $r \ell u$-groupoid, using the idea from Galatos-Raftery \cite{GR04}. 
Immediately, it turns out that involutive full non-associative Lambek calculus (denoted by $\mathbf{InFNL}$) is strongly conservative over full non-associative Lambek calculus (denoted by $\mathbf{FNL}$).  
In conjunction with the undecidability result established by Chvalovsk\'{y} \cite{Ch15}, it follows that the deducibility problem for  $\mathbf{InFNL}$ is undecidable. 
Moreover, using the idea in \cite{Tan19}, we show that the provability problem for $\mathbf{NACCLL}^-$ is undecidable. 
We stress that Buszkowski \cite{Bus16} proved that the deducibility problem for $\mathbf{InFNL}$ is undecidable. 
However, the argument just described works well even when the rules of contraction and exchange are also concerned; consequently, the provability problems for all the extensions of $\mathbf{NACCLL}^-$ by contraction and exchange are undecidable.

We end the introduction by summarizing the contents of the following sections. 
Section~\ref{pre} consists of three parts. 
In the first part, we outline sequent calculi for $\mathbf{NACILL}$.  
In the second part, we introduce interior $r \ell u$-groupoids and confirm that each of the extensions of $\mathbf{NACILL}$ by the rules of weakening, contraction, and exchange is strongly complete with respect to a variety of interior $r \ell u$-groupoids.
In the third part, we recall some well-known notions in residuated structures, such as nuclei.  
Section \ref{main} consists of two parts.
In the first half, we show some basic properties of enriched residuated frames.
The second half is devoted to the proof of the FEP for integral interior $r \ell u$-groupoids.
In Section \ref{classical}, we prove that the provability problems for all the extension of $\mathbf{NACCLL}^-$ by the rules of contraction and exchange are undecidable.

\section{Preliminaries}
\label{pre}
\begin{figure}[t]
	\begin{description}
		\item[Initial sequents:]
		\[
		\begin{bprooftree}
		\AxiomC{}
		\RightLabel{(Id)}
		\UnaryInfC{$a \Rightarrow a$}
		\end{bprooftree}
		\begin{bprooftree}
		\AxiomC{}
		\RightLabel{($\Rightarrow 1$)}
		\UnaryInfC{$\varepsilon \Rightarrow 1$}
		\end{bprooftree}
		\begin{bprooftree}
		\AxiomC{}
		\RightLabel{$(0\Rightarrow)$}
		\UnaryInfC{$0 \Rightarrow \epsilon$}
		\end{bprooftree}
		\]
		\item[Cut:]
		\[
		\begin{bprooftree}
		\AxiomC{$x \Rightarrow a$}
		\AxiomC{$u(a) \Rightarrow \delta$}
		\RightLabel{$(cut)$}
		\BinaryInfC{$u(x) \Rightarrow \delta$}
		\end{bprooftree}
		\]
		\item[Rules for logical connectives:]
		\[
		\begin{bprooftree}
		\AxiomC{$u(\varepsilon) \Rightarrow \delta$}
		\RightLabel{$(1\Rightarrow)$}
		\UnaryInfC{$u(1) \Rightarrow \delta$}
		\end{bprooftree}
		\begin{bprooftree}
		\AxiomC{$x \Rightarrow \epsilon$}
		\RightLabel{$(\Rightarrow 0)$}
		\UnaryInfC{$x \Rightarrow 0$}
		\end{bprooftree}
		\begin{bprooftree}
		\AxiomC{$x \Rightarrow a$}
		\AxiomC{$u(b)\Rightarrow \delta$}
		\RightLabel{$(\backslash \Rightarrow)$}
		\BinaryInfC{$u(x \circ (a \backslash b)) \Rightarrow \delta$}
		\end{bprooftree}
		\]
		\[
		\begin{bprooftree}
		\AxiomC{$a \circ x \Rightarrow b$}
		\RightLabel{$(\Rightarrow \backslash)$}
		\UnaryInfC{$x \Rightarrow a \backslash b$}
		\end{bprooftree}
		\begin{bprooftree}
		\AxiomC{$u(a \circ b) \Rightarrow \delta$}
		\RightLabel{$(\cdot \Rightarrow)$}
		\UnaryInfC{$u(a \cdot b) \Rightarrow \delta$}
		\end{bprooftree}
		\begin{bprooftree}
		\AxiomC{$x \Rightarrow a$}
		\AxiomC{$y \Rightarrow b$}
		\RightLabel{$(\Rightarrow \cdot)$}
		\BinaryInfC{$x \circ y \Rightarrow a \cdot b$}
		\end{bprooftree}
		\]
		\[
		\begin{bprooftree}
		\AxiomC{$x \circ a \Rightarrow b$}
		\RightLabel{$(\Rightarrow /)$}
		\UnaryInfC{$x \Rightarrow b/a$}
		\end{bprooftree}
		\begin{bprooftree}
		\AxiomC{$x \Rightarrow a$}
		\AxiomC{$u(b) \Rightarrow \delta$}
		\RightLabel{$(/ \Rightarrow)$}
		\BinaryInfC{$u((b/a) \circ x) \Rightarrow \delta$}
		\end{bprooftree}
		\]
		\[
		\begin{bprooftree}
		\AxiomC{$u(a_i) \Rightarrow \delta$}
		\RightLabel{$(\land \Rightarrow)$ for $i=1,2$}
		\UnaryInfC{$u(a_1 \land a_2) \Rightarrow \delta$}
		\end{bprooftree}
		\begin{bprooftree}
		\AxiomC{$x \Rightarrow a$}
		\AxiomC{$x \Rightarrow b$}
		\RightLabel{$(\Rightarrow \land)$}
		\BinaryInfC{$x \Rightarrow a \land b$}
		\end{bprooftree}
		\]
		\[
		\begin{bprooftree}
		\AxiomC{$u(a) \Rightarrow\delta$}
		\AxiomC{$u(b) \Rightarrow \delta$}
		\RightLabel{$(\lor \Rightarrow)$}
		\BinaryInfC{$u(a \lor b) \Rightarrow \delta$}
		\end{bprooftree}
		\begin{bprooftree}
		\AxiomC{$x \Rightarrow a_i$}
		\RightLabel{$(\Rightarrow \lor)$ for $i=1,2$}
		\UnaryInfC{$x \Rightarrow a_1 \lor a_2$}
		\end{bprooftree}
		\]
			\[
		\begin{bprooftree}
		\AxiomC{$u(a) \Rightarrow \delta$}
		\RightLabel{$(\oc \Rightarrow)$}
		\UnaryInfC{$u(\oc a) \Rightarrow \delta$}
		\end{bprooftree}
		\begin{bprooftree}
		\AxiomC{$k \Rightarrow a$}
		\RightLabel{$(\Rightarrow \oc)$}
		\UnaryInfC{$k \Rightarrow \oc a$}
		\end{bprooftree}
		\begin{bprooftree}
		\AxiomC{$u(\varepsilon) \Rightarrow \delta$}
		\RightLabel{$(kw)$}
		\UnaryInfC{$u(k) \Rightarrow \delta$}
		\end{bprooftree}
		\]
		\[
		\begin{bprooftree}
		\AxiomC{$u(k \circ k) \Rightarrow \delta$}
		\RightLabel{$(kc)$}
		\UnaryInfC{$u(k) \Rightarrow \delta$}
		\end{bprooftree}
		\begin{bprooftree}
		\AxiomC{$u(k \circ y) \Rightarrow \delta$}
		\RightLabel{$(ke)$}
		\doubleLine
		\UnaryInfC{$u(y \circ k) \Rightarrow \delta$}
		\end{bprooftree}
		\]
		\[
		\begin{bprooftree}
		\AxiomC{$u((k \circ y) \circ z) \Rightarrow \delta$}
		\RightLabel{$(ka1)$}
		\doubleLine
		\UnaryInfC{$u(k \circ (y \circ z)) \Rightarrow \delta$}
		\end{bprooftree}
		\begin{bprooftree}
		\AxiomC{$u((x \circ y) \circ k) \Rightarrow \delta$}
		\RightLabel{$(ka2)$}
		\doubleLine
		\UnaryInfC{$u(x \circ (y \circ k)) \Rightarrow \delta$}
		\end{bprooftree}
		\]
	\end{description}
	\caption{Inference rules of $\mathbf{NACILL}^0$}
	\label{inf1}
\end{figure}

First of all, we describe a sequent calculus for \emph{propositional non-associative non-commutative intuitionistic linear logic with zero}, denoted by $\mathbf{NACILL}^0$. 
The \emph{language} $\mathcal{L}^0_{\oc}$ of $\mathbf{NACILL}^0$ consists of operation symbols $\cdot, \land, \lor, \backslash,/$ of arity $2$, $\oc$ of arity $1$, and $1,0$ of arity $0$. 
We fix the (countable) set of variables and denote it by $V$. 
An \emph{$\mathcal{L}^0_{\oc}$-formula} is just a term in the language $\mathcal{L}_{\oc}^0$ over $V$. 
In what follows, we write $Fm_{\mathcal{L}^0_{\oc}}$ for the set of $\mathcal{L}^0_{\oc}$-formulas, $(Fm_{\mathcal{L}^0_{\oc}}^{\circ},\circ,\varepsilon)$ for the free unital groupoid generated by $Fm_{\mathcal{L}^0_{\oc}}$, $U_{\mathcal{L}^0_{\oc}}$ for the set of unary linear polynomials over $Fm^{\circ}_{\mathcal{L}^0_{\oc}}$, and $K_{\mathcal{L}^0_{\oc}}$ for the free unital groupoid generated by the set $\{\oc a \mid a \in Fm_{\mathcal{L}^0_{\oc}}\}$. 
Given $u \in U_{\mathcal{L}^0_{\oc}}$ and $x \in Fm_{\mathcal{L}^0_{\oc}}^{\circ}$, $u(x)$ denotes the image of $x$ under $u$.  
Specifically, we write $\id$ for the identity polynomial, i.e., $\id(x)=x$ for any $x \in Fm_{\mathcal{L}^0_{\oc}}^{\circ}$. 
For a detailed discussion of unary linear polynomials, refer the reader to \cite{GO10}. 
An \emph{$\mathcal{L}^0_{\oc}$-sequent} is an element of the set $Fm_{\mathcal{L}^0_{\oc}}^\circ \times (Fm_{\mathcal{L}^0_{\oc}} \cup \{\epsilon\})$, where the symbol $\epsilon$ is called the \emph{empty stoup}. 
For the sake of readability, we always write $x \Rightarrow \delta$ for $(x,\delta) \in Fm_{\mathcal{L}^0_{\oc}}^\circ \times (Fm_{\mathcal{L}^0_{\oc}} \cup \{\epsilon\})$. 
A \emph{sequent calculus for $\mathbf{NACILL}^0$} consists of the inference rules displayed in Figure~\ref{inf1}. 
In Figure~\ref{inf1}, metavariables $x,y,z$ range over $Fm^{\circ}_{\mathcal{L}^0_{\oc}}$, $a,a_1,a_2,b$ over $Fm_{\mathcal{L}^0_{\oc}}$, $u$ over $U_{\mathcal{L}^0_{\oc}}$, $\delta$ over $Fm_{\mathcal{L}^0_{\oc}} \cup \{\epsilon\}$, and $k$ over $K_{\mathcal{L}^0_{\oc}}$. 
In particular, each of the rules equipped with a double line, i.e., ($ke$), ($ka1$), and ($ka2$), means that the lower sequent implies the upper sequent and vice versa.

Given a set $\mathcal{S} \cup \{s\}$ of $\mathcal{L}^0_{\oc}$-sequents, a \emph{proof} of $s$ in $\mathbf{NACILL}^0$ from $\mathcal{S}$ is inductively defined as follows: (i) an initial sequent $s$ is a proof of $s$ in $\mathbf{NACILL}^0$ from $\mathcal{S}$, (ii) a sequent $s$ from $\mathcal{S}$ is a proof of $s$ in $\mathbf{NACILL}^0$ from $\mathcal{S}$, and (iii) if $\Pi_1,\ldots,\Pi_n$ are proofs of $s_1,\ldots,s_n$ in $\mathbf{NACILL}^0$ from $\mathcal{S}$, respectively, and the expression of the form
\begin{prooftree}
	\AxiomC{$s_1$}
	\AxiomC{$\cdots$}
	\AxiomC{$s_n$}
	\RightLabel{$(r)$}
	\TrinaryInfC{$s$}
\end{prooftree}
is an instance of an inference rule $(r)$ in $\mathbf{NACILL}^0$, then the figure below is a proof of $s$ in $\mathbf{NACILL}^0$ from $\mathcal{S}$:
\begin{prooftree}
	\AxiomC{$\Pi_1$}
	\AxiomC{$\cdots$}
	\AxiomC{$\Pi_n$}
	\RightLabel{$(r)$}
	\TrinaryInfC{$s$}
\end{prooftree}
We say that a sequent $s$ is \emph{deducible} in $\mathbf{NACILL}^0$ from $\mathcal{S}$, and write $\mathcal{S} \vdash_{\mathbf{NACILL}^0} s$, if there exists a proof of $s$ in $\mathbf{NACILL}^0$ from $\mathcal{S}$. 
Specifically, we say that a sequent $s$ is \emph{provable} in $\mathbf{NACILL}^0$, and write $\vdash_{\mathbf{NACILL}^0} s$, if $s$ is deducible from the empty assumption. 

One can prove the cut-elimination for $\mathbf{NACILL}^0$. 
The verification is, however, beyond the scope of this paper; refer the interested reader to \cite[Appendix A]{Tan19} for a proof using enriched Gentzen frames. 
\begin{Thm}
	For any sequent $s$, if $s$ is provable in $\mathbf{NACILL}^0$, it is provable in $\mathbf{NACILL}^{0}$ without using the rule of $(cut)$.
\end{Thm}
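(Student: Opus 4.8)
The plan is to give a syntactic cut-elimination argument in the style of Gentzen, by permuting and reducing applications of $(cut)$ until they disappear. The core is a \emph{key lemma}: if both $x \Rightarrow a$ and $u(a) \Rightarrow \delta$ are cut-free provable, then so is $u(x) \Rightarrow \delta$. I would prove this by lexicographic induction on the pair consisting of the complexity $|a|$ of the cut formula (its number of connectives and constants) and the sum of the heights of the two given cut-free derivations. The full theorem then follows by an outer induction on the number of applications of $(cut)$ in a proof: one selects a topmost cut, whose two premises are already cut-free, replaces it using the key lemma, and repeats until no cut remains.

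Within the key lemma there are three families of cases. First, if either premise is an initial sequent, the cut disappears trivially; for instance, if $x \Rightarrow a$ is an instance of $(\mathrm{Id})$ then $u(x) \Rightarrow \delta$ is literally the other premise. Second, the \emph{principal case}, where $a$ is principal in the last rule of both premises: here one replaces the cut by cuts on the immediate subformulas of $a$, which have strictly smaller complexity and are therefore discharged by the first component of the induction hypothesis. Thus for $a = b \cdot c$ one cuts successively on $c$ and then on $b$ inside the context $u$ opened up by $(\cdot \Rightarrow)$; for $a = b \backslash c$ (and symmetrically $c / b$) one cuts on $b$ and on $c$ using the premises of $(\Rightarrow \backslash)$ and $(\backslash \Rightarrow)$; the cases for $\land, \lor, 1, 0$ are analogous. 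Third, the \emph{permutation case}, where $a$ is not principal in at least one premise: one commutes the cut above the last rule of that premise, producing cuts with the same cut formula $a$ but strictly smaller height sum, to which the second component of the induction hypothesis applies. Because antecedents live in the free unital groupoid $Fm^{\circ}_{\mathcal{L}^0_{\oc}}$ and contexts are the unary linear polynomials of $U_{\mathcal{L}^0_{\oc}}$, the only bookkeeping here is to re-decompose the one-hole context $u$ so that its hole sits correctly relative to the side formula of the rule being permuted; this is routine but must be verified for each rule.

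The principal obstacle is the modality $\oc$ together with its structural rules, and specifically the contraction rule $(kc)$. A cut whose cut formula is $\oc a$ cannot be naively permuted above a $(kc)$ that has duplicated it: this would call for two cuts on $\oc a$, and since each again has complexity $|\oc a|$ the first induction component fails to decrease while the height grows, so the inner induction breaks down. The remedy is the familiar one for exponentials, adapted to the groupoid setting. I would strengthen the key lemma so that, when the left premise ends in $(\Rightarrow \oc)$ deriving $k \Rightarrow \oc a$ with $k \in K_{\mathcal{L}^0_{\oc}}$, the cut is carried out simultaneously against \emph{every} descendant of the displayed $\oc a$ generated by $(kw), (kc), (ke), (ka1), (ka2)$ in the right premise. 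Since $(\Rightarrow \oc)$ forces the antecedent $k$ to be a $\oc$-tree, each such occurrence can be replaced uniformly by a fresh copy of $k$: applications of $(kw)$ and $(kc)$ on $\oc a$ are matched by weakening and contraction on copies of $k$, while the bidirectional rules $(ke), (ka1), (ka2)$ are simply replayed. This leaves a single cut on the subformula $a$, of strictly smaller complexity, and the induction closes.

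Finally, I note a semantic alternative that sidesteps this case analysis entirely: from the cut-free consequence relation one builds an (enriched) Gentzen/residuated frame whose associated dual algebra is a model of $\mathbf{NACILL}^0$, and soundness of the full calculus into this model together with the coincidence of membership in the model with cut-free derivability yields admissibility of $(cut)$ directly. This is the route taken in \cite[Appendix~A]{Tan19}, and for the present calculus it is likely the cleaner of the two, precisely because it absorbs the delicate $(kc)$ bookkeeping into the algebraic closure operators rather than treating it by hand.
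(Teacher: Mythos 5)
The paper itself does not prove this theorem in the text: it explicitly defers to \cite[Appendix A]{Tan19}, where cut admissibility is established algebraically via enriched cut-free Gentzen frames --- one takes the frame whose nuclear relation is cut-free derivability, observes that its dual algebra $\mathbf{F}^+$ is a model of the full calculus (including $(cut)$), and reads cut-free provability back off the quasi-homomorphism of Lemma~\ref{truth}. Your primary route, a syntactic Gentzen-style procedure, is therefore genuinely different from the one the paper relies on, and you correctly identify the paper's actual method as your ``semantic alternative'' at the end. Your syntactic sketch is essentially sound: the double induction, the principal/permutation case split, and in particular the diagnosis that $(kc)$ blocks the naive permutation and must be handled by a generalized cut against all descendants of the displayed $\oc a$ are exactly the standard remedies, and they transfer to the groupoid setting because a tree obtained from $k \in K_{\mathcal{L}^0_{\oc}}$ by substituting another element of $K_{\mathcal{L}^0_{\oc}}$ for a leaf is again in $K_{\mathcal{L}^0_{\oc}}$, so the rules $(kw)$, $(kc)$, $(ke)$, $(ka1)$, $(ka2)$ can be replayed on the copies. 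One point you should make explicit is the promotion case on the \emph{right} premise: if $u(\oc a) \Rightarrow \oc b$ is inferred by $(\Rightarrow\oc)$, then $u(\oc a) \in K_{\mathcal{L}^0_{\oc}}$, and after substituting $k$ for the occurrences of $\oc a$ you must check that the resulting antecedent is still in $K_{\mathcal{L}^0_{\oc}}$ so that $(\Rightarrow\oc)$ can be reapplied; this holds precisely because the strengthened lemma only fires when the left premise ends in $(\Rightarrow\oc)$, forcing $k \in K_{\mathcal{L}^0_{\oc}}$. The trade-off between the two routes is the usual one: your syntactic argument yields an effective elimination procedure at the cost of delicate polynomial-context and descendant bookkeeping, whereas the frame-theoretic argument the paper cites gives only admissibility but absorbs all of that bookkeeping into the closure operator $\gamma_N$, which is why the author chose it.
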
 

Given a set $R$ of inference rules, the \emph{extension} of $\mathbf{NACILL}^0$ by $R$ is the sequent calculus obtained from $\mathbf{NACILL}^0$ by adding all the rules from $R$, and is denoted by $\mathbf{NACILL}^0_R$.   
In an obvious way, we define the deducibility and the provability of sequents in $\mathbf{NACILL}^0_R$, for which we use the symbol $\vdash_{\mathbf{NACILL}^0_R}$. 
The most typical extensions of $\mathbf{NACILL}^0$ are obtained from $\mathbf{NACILL}^0$ by adding some of the following basic structural rules: 
\[
\begin{bprooftree}
\AxiomC{$u(x \circ y) \Rightarrow \delta$}
\RightLabel{$(e)$}
\UnaryInfC{$u(y \circ x) \Rightarrow \delta$}
\end{bprooftree}
\begin{bprooftree}
\AxiomC{$u(\varepsilon) \Rightarrow \delta$}
\RightLabel{$(i)$}
\UnaryInfC{$u(x) \Rightarrow \delta$}
\end{bprooftree}
\begin{bprooftree}
\AxiomC{$x \Rightarrow \epsilon$}
\RightLabel{$(o)$}
\UnaryInfC{$x \Rightarrow a$}
\end{bprooftree}
\begin{bprooftree}
\AxiomC{$u(x \circ x) \Rightarrow \delta$}
\RightLabel{$(c)$}
\UnaryInfC{$u(x) \Rightarrow \delta$}
\end{bprooftree}
\begin{bprooftree}
\AxiomC{$u((x \circ y) \circ z) \Rightarrow \delta$}
\RightLabel{$(a)$}
\doubleLine
\UnaryInfC{$u(x \circ (y \circ z)) \Rightarrow \delta$}
\end{bprooftree}
\]
For instance, $\mathbf{NACILL}^0_{ae}$ is just (the $\{\top,\bot\}$-free fragment of) Troelstra's \emph{intuitionistic linear logic with zero}, denoted by $\mathbf{ILZ}$ (cf. \cite{Tro92}). 
We often abbreviate the combination of the rules of $(i)$ and $(o)$ as $(w)$.

Next, we also review classical versions of $\mathbf{NACILL}^0$. 
\emph{Propositional non-associative non-commutative classical linear logic} (denoted by $\mathbf{NACCLL}^-$) is the extension of $\mathbf{NACILL}^0$ by the following three initial sequents:
\[
\begin{bprooftree}
\AxiomC{}
\RightLabel{(DNE1)}
\UnaryInfC{$\negrl a\Rightarrow a$}
\end{bprooftree}
\begin{bprooftree}
\AxiomC{}
\RightLabel{(DNE2)}
\UnaryInfC{$\neglr a\Rightarrow a$}
\end{bprooftree}
\begin{bprooftree}
\AxiomC{}
\RightLabel{(COMP)}
\UnaryInfC{$\negr a/b \Leftrightarrow a \backslash \!\!\negl \!b$}
\end{bprooftree}
\]
Here, we use the expression of the form $\negr a$ (resp. $\negl a$) to denote $a \backslash 0$ (resp. $0/a$), and the expression of the form $\negr a/b \Leftrightarrow a \backslash \!\!\negl \!b$ is an abbreviation of the sequents $\negr a/b \Rightarrow a \backslash \!\!\negl \!b$ and $a \backslash \!\!\negl \!b \Rightarrow \negr a/b$. 
It might be more natural that a classical version of $\mathbf{NACILL}^0$ has a cyclic negation. 
In view of this, one obtains another type of propositional non-associative non-commutative classical linear logic by adding the new initial sequent $\negr a \Leftrightarrow \negl a$ to $\mathbf{NACCLL}^-$. 
We denote this logic by $\mathbf{NACCLL}$. 

Let $\mathcal{M}$ be a sublanguage of $\mathcal{L}^0_{\oc}$, i.e., a subset of $\mathcal{L}^0_{\oc}$. 
The \emph{$\mathcal{M}$-fragment} (or \emph{$\mathcal{N}$-free fragment}) of $\mathbf{NACILL}^0$ (resp. $\mathbf{NACCLL}^-$, $\mathbf{NACCLL}$) is the sequent calculus obtained from $\mathbf{NACILL}^0$ (resp. $\mathbf{NACCLL}^-$, $\mathbf{NACCLL}$) by removing all the inference rules that are involved with the operation symbols from $\mathcal{N}$, where $\mathcal{N}$ is the complement of $\mathcal{M}$ in $\mathcal{L}^0_{\oc}$. 
For instance, we denote the $\mathcal{L}_{\oc}$-fragment of $\mathbf{NACILL}^0$ by  $\mathbf{NACILL}$, where $\mathcal{L}_{\oc}=\{\land,\lor,\cdot,\backslash,/,\oc,1\}$. 
Given a sublanguage $\mathcal{M}$ of $\mathcal{L}^0_{\oc}$ such that $0 \not\in \mathcal{M}$, we always assume that an $\mathcal{M}$-sequent is an element of the set $Fm_{\mathcal{M}}^\circ \times Fm_{\mathcal{M}}$, i.e., the right-hand side of every $\mathcal{M}$-sequent has exactly one formula. 
Moreover, we review some non-associative substructural logics.
Roughly speaking, non-associative substructural logics are just $\oc$-free fragments of extensions of $\mathbf{NACILL}$. 
For instance, \emph{full non-associative Lambek calculus} ($\mathbf{FNL}$) is the $\mathcal{L}$-fragment of $\mathbf{NACILL}^0$, where $\mathcal{L}=\{\land,\lor,\cdot,\backslash,/,1\}$. 
The logic $\mathbf{FNL}$ is equivalent to $\mathbf{GL}$ in \cite{GJ13} and to FNL1 in \cite{Bus17}. 
Likewise, in view of the constructions of $\mathbf{NACCLL}^-$ and $\mathbf{NACCLL}$, one also defines two classical versions of $\mathbf{FNL}$. 
\emph{Involutive full non-associative Lambek calculus} (resp. \emph{cyclic involutive full non-associative Lambek calculus}) is the $\mathcal{L}^{0}$-fragment of $\mathbf{NACCLL}^-$ (resp. $\mathbf{NACCLL}$) , where $\mathcal{L}^0=\{\land,\lor,\cdot,\backslash,/,1,0\}$, and is denoted by $\mathbf{InFNL}$ (resp. $\mathbf{CyInFNL}$). 
The logic $\mathbf{InFNL}$ is equivalent to $\mathbf{InGL}$ in \cite{GJ13} and to InFNL1 in \cite{Bus17}.

From now on, we discuss the algebraic models for $\mathbf{NACILL}^0$ and related systems. 
We briefly recall some basic notions from universal algebra, based on \cite{BvA04,Bur82,GJKO07,GO10}. 
Let $\mathcal{M}$ be a sublanguage of $\mathcal{L}^0_{\oc}$. 
A \emph{partial $\mathcal{M}$-algebra} is a structure of the form $\mathbf{A}=(A,(f^{\mathbf{A}})_{f \in \mathcal{M}})$ such that $A$ is a set and $f^{\mathbf{A}}$ is an $n$-ary partial operation on $A$ for each $f \in \mathcal{M}$, where $n$ stands for the arity of $f$. 
For each $f \in \mathcal{M}$, $f^{\mathbf{A}}$ is called a \emph{fundamental partial operation} of $\mathbf{A}$.
Given $x_1,\ldots,x_n \in A$ and an $n$-ary fundamental partial operation $f^{\mathbf{A}}$ of $\mathbf{A}$, we say that $f^{\mathbf{A}}(x_1,\ldots,x_n)$ is \emph{defined} if there exists $y \in A$ such that  $f^{\mathbf{A}}(x_1,\ldots,x_n)=y$. 
Otherwise, $f^{\mathbf{A}}(x_1,\ldots,x_n)$ is said to be \emph{undefined}.
For $f \in \mathcal{M}$ of arity $n$, we denote the set $\{(x_1,\ldots,x_n) \in A^n \mid \text{$f^{\mathbf{A}}(x_1,\ldots,x_n)$ is defined}\}$ by $\dom f^{\mathbf{A}}$. 
A partial $\mathcal{M}$-algebra  $\mathbf{A}$ is merely called an \emph{$\mathcal{M}$-algebra} if $f^{\mathbf{A}}$ is a total operation on $\mathbf{A}$ for any $f \in \mathcal{M}$.
The \emph{$\mathcal{N}$-reduct} of an $\mathcal{M}$-algebra $(A,(f^{\mathbf{A}})_{f \in \mathcal{M}})$ is an algebra $(A,(f^{\mathbf{A}})_{f \in \mathcal{N}})$, where $\mathcal{N}$ is a sublanguage of $\mathcal{M}$.
Given an $\mathcal{M}$-algebra $\mathbf{A}$, a \emph{partial subalgebra} of $\mathbf{A}$ is a partial $\mathcal{M}$-algebra $\mathbf{B}=(B,(f^{\mathbf{B}})_{f \in \mathcal{M}})$ such that $B \subseteq A$ and for each $n$-ary fundamental partial operation $f^{\mathbf{B}}$ and $x_1,\ldots,x_n \in B$:
\[
f^{\mathbf{B}}(x_1,\ldots,x_n)=
\begin{cases}
f^{\mathbf{A}}(x_1,\ldots,x_n) & \text{if $f^{\mathbf{A}}(x_1,\ldots,x_n) \in B$,}\\
\text{undefined} & \text{otherwise.}
\end{cases}
\]

In what follows, we recall various kinds of algebras in sublanguages of $\mathcal{L}^0_{\oc}$ in stages. 
\begin{Def}[see e.g., \cite{GJKO07, GO10}]
	{\normalfont A \emph{residuated lattice-ordered unital groupoid} (\emph{$r\ell u$-groupoid}, for short) is an algebra ${\mathbf{A}}=(A,\wedge,\vee,\cdot,\backslash,/,1)$ such that:
	\begin{itemize}
		\item $(A,\wedge,\vee)$ is a lattice,
		\item $(A,\cdot,1)$ is a unital groupoid, and
		\item for any $x,y,z \in A$, $x \cdot y \leq z$ iff $y \leq x\backslash z$ iff $x \leq z/y$.
	\end{itemize} }
\end{Def} 

An \emph{residuated lattice-ordered unital groupoid with zero} (\emph{$r \ell uz$-groupoid}, for short) is an algebra $(A,\wedge,\vee,\cdot,\backslash,/,1,0)$ such that $(A,\wedge,\vee,\cdot,\backslash,/,1)$ is an $r \ell u$-groupoid and $0 \in A$. 
We often abbreviate $x \backslash 0$ (resp. $0/x$) as $\negr a$ (resp. $\negl a$).
The class $\mathsf{RLUG}$ (resp. $\mathsf{RLUG}^0$) of $r\ell u$-groupoids (resp. $r\ell uz$-groupoids) forms a variety; see \cite{GO10} for details.  
We consider the subvarieties of $\mathsf{RLUG}$ (or $\mathsf{RLUG}^0$) axiomatized by the following equations:
\begin{align*}
(\mathsf{e})\,x \cdot y &\leq y \cdot x & (\mathsf{c})\,x &\leq x \cdot x  & (\mathsf{i})\,x &\leq 1
\end{align*} 
As expected, the equations $(\mathsf{e})$ $(\mathsf{c})$, and $(\mathsf{i})$ correspond to the structural rules $(e)$, $(c)$, and $(i)$, respectively. 
An $r \ell u$-groupoid satisfying the equation $(\mathsf{e})$ (resp. $(\mathsf{c})$, $(\mathsf{i})$) is said to be \emph{commutative} (resp. \emph{square-increasing},  \emph{integral}). 
Given $R \subseteq \{e,c,i\}$, we write $\mathsf{RLUG}_{\mathsf{R}}$ for the subvariety of $\mathsf{RLUG}$ axiomatized by the set $\mathsf{R}$, which consists of the equations corresponding to $R$. 
For instance, if $R=\{e,c\}$, then $\mathsf{RLUG}_{\mathsf{R}}$ forms the variety of square-increasing and commutative $r \ell u$-groupoids. 
An $r \ell uz$-groupoid satisfying the equation $0 \leq x$, denoted by $(\mathsf{o})$, is said to be \emph{zero-bounded}. 
The combination of the equations of $(\mathsf{i})$ and $(\mathsf{o})$ is denoted by $(\mathsf{w})$. 
An \emph{involutive $r\ell uz$-groupoid} is an $r \ell uz$-groupoid $(A,\wedge,\vee,\cdot,\backslash,/,1,0)$ such that for any $x,y,z \in A$, $\negrl x=x=\neglr x$ and $x \cdot y \leq z$ iff $\negl z \cdot x \leq \negl y$ iff $y \cdottt \negr z \leq \negr x$ hold. 
Since the latter condition can be replaced by the equation $\negr x/y=x\backslash\!\!\negl y$, the class $\mathsf{InRLUG}$ of involutive $r \ell uz$-groupoids forms a variety.  
A \emph{cyclic involutive $r \ell uz$-groupoid} is an involutive $r\ell uz$-groupoid satisfying the equation $\negr x=\negl x$.
We write $\mathsf{CyInRLUG}$ for the variety of cyclic involutive $r\ell uz$-groupoids. 

Now we define the validity of $\mathcal{L}^0$-sequents in $r\ell uz$-groupoids. 
Given an $r \ell uz$-groupoid, a map $f \colon V \to A$ is called a \emph{valuation} into $\mathbf{A}$.
This map is uniquely extended to the homomorphism $f \colon \mathbf{Fm}_{\mathcal{L}^0} \to \mathbf{A}$, where $\mathbf{Fm}_{\mathcal{L}^0}$ denotes the absolutely free algebra in $\mathcal{L}^0$ over $V$. 
For a valuation $f$ into an $r\ell uz$-groupoid $\mathbf{A}$, we say that an $\mathcal{L}^0$-sequent $x \Rightarrow a$ is \emph{true} in $\mathbf{A}$ under $f$ and write $\models_{\mathbf{A},f} x \Rightarrow a$, if $f(\rho(x)) \leq f(a)$ holds, where $\rho(x)$ stands for the formula obtained from $x$ by replacing each $\circ$ with $\cdot$. 
(In particular, we put $\rho(x)=1$ if $x=\varepsilon$.) 
Likewise, we say that an $\mathcal{L}^0$-sequent $x \Rightarrow \epsilon$ is true in $\mathbf{A}$ under $f$ and write  $\models_{\mathbf{A},f} x \Rightarrow \epsilon$, if $f(\rho(x)) \leq 0$. 
More generally, given a set $\mathcal{S} \cup \{x \Rightarrow \delta\}$ of $\mathcal{L}^0$-sequents, we write $\mathcal{S} \models_{\mathbf{A},f} x \Rightarrow \delta$, if $\models_{\mathbf{A},f} x \Rightarrow \delta$ holds whenever $\models_{\mathbf{A},f} x' \Rightarrow \delta'$ holds for all $x' \Rightarrow \delta' \in \mathcal{S}$.
We write $\mathcal{S} \models_{\mathbf{A}} x \Rightarrow \delta$ if $\mathcal{S} \models_{\mathbf{A},f} x \Rightarrow \delta$ for all valuation $f$ into $\mathbf{A}$. 
Given a class $\mathcal{K}$ of $r\ell uz$-groupoids, we write $\mathcal{S} \models_{\mathcal{K}} x \Rightarrow \delta$ if $\mathcal{S} \models_{\mathbf{A}} x \Rightarrow \delta$ for any member $\mathbf{A}$ of $\mathcal{K}$.    
Similarly, one defines the validity of $\mathcal{L}$-sequents in $r\ell u$-groupoids.
One proves the following completeness theorems by a standard method. 

\begin{Lem}[\cite{GO10}]
	\label{compmodalfree1}
	Let $\mathcal{S} \cup \{s\}$ be a set of $\mathcal{L}$-sequents and $R \subseteq \{e,c,i\}$. Then,
		\[
		\mathcal{S} \vdash_{\mathbf{FNL}_R} s \iff \mathcal{S} \models_{\mathsf{RLUG}_{\mathsf{R}}}s.
		\]
\end{Lem}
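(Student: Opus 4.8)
The plan is to prove the two directions separately: soundness ($\Rightarrow$) by induction on derivations, and completeness ($\Leftarrow$) by a Lindenbaum--Tarski construction tailored to the assumption set $\mathcal{S}$. A preliminary lemma I would establish first is that the groupoid constructor $\circ$ and the connective $\cdot$ are interchangeable on the left of sequents: for every $x \in Fm^{\circ}_{\mathcal{L}}$ and every right-hand side $a$, one has $\mathcal{S} \vdash_{\mathbf{FNL}_R} x \Rightarrow a$ iff $\mathcal{S} \vdash_{\mathbf{FNL}_R} \rho(x) \Rightarrow a$. This is proved by induction on $x$ using $(\cdot \Rightarrow)$, $(\Rightarrow \cdot)$ (and the unit rules $(1 \Rightarrow)$, $(\Rightarrow 1)$ for the base case $x = \varepsilon$, where $\rho(\varepsilon)=1$) together with $(cut)$. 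It lets me pass freely between $x$ and its ``multiplied-out'' formula $\rho(x)$, which is exactly the bridge demanded by the semantic clause $f(\rho(x)) \le f(a)$.

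For soundness I would fix $\mathbf{A} \in \mathsf{RLUG}_{\mathsf{R}}$ and a valuation $f$ satisfying every member of $\mathcal{S}$, and show by induction on proof height that each deducible sequent is true in $\mathbf{A}$ under $f$. The only non-routine ingredient is a monotonicity lemma for contexts: the function $c \mapsto f(\rho(u(c)))$ induced on $\mathbf{A}$ by a unary linear polynomial $u$ is order-preserving, which follows from the fact that $\cdot$ is monotone in both arguments (a consequence of residuation). Granting this, $(cut)$ and the one-premise context rules such as $(\cdot \Rightarrow)$ and $(\land \Rightarrow)$ are immediate; the residuation rules $(\Rightarrow \backslash)$, $(\backslash \Rightarrow)$, $(\Rightarrow /)$, $(/ \Rightarrow)$ use precisely the residuation law; and each structural rule indexed by $R$ is justified by its corresponding equation, namely $(e)$ by $x \cdot y \le y \cdot x$, $(c)$ by $x \le x \cdot x$, and $(i)$ by $x \le 1$.

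For completeness I would form the Lindenbaum--Tarski algebra. Define $a \equiv b$ iff $\mathcal{S} \vdash_{\mathbf{FNL}_R} a \Rightarrow b$ and $\mathcal{S} \vdash_{\mathbf{FNL}_R} b \Rightarrow a$; using $(cut)$ and the logical rules one checks that $\equiv$ is a congruence on $\mathbf{Fm}_{\mathcal{L}}$, so the quotient $\mathbf{A}_{\mathcal{S}} = \mathbf{Fm}_{\mathcal{L}}/{\equiv}$ is an $\mathcal{L}$-algebra in which $[a] \le [b]$ iff $\mathcal{S} \vdash_{\mathbf{FNL}_R} a \Rightarrow b$. The lattice and unital-groupoid axioms and the residuation law hold by the corresponding rules, so $\mathbf{A}_{\mathcal{S}}$ is an $r\ell u$-groupoid, and the structural rules of $R$ force the equations of $\mathsf{R}$, whence $\mathbf{A}_{\mathcal{S}} \in \mathsf{RLUG}_{\mathsf{R}}$. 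Taking the canonical valuation $g(p) = [p]$, an easy induction gives $g(a) = [a]$ for every formula. By the preliminary lemma every sequent $x \Rightarrow a$ in $\mathcal{S}$ is true under $g$, since $\mathcal{S} \vdash x \Rightarrow a$ yields $\mathcal{S} \vdash \rho(x) \Rightarrow a$, i.e. $g(\rho(x)) = [\rho(x)] \le [a] = g(a)$. Hence, if $\mathcal{S} \models_{\mathsf{RLUG}_{\mathsf{R}}} s$, then $s$ is true in $\mathbf{A}_{\mathcal{S}}$ under $g$; writing $s = x \Rightarrow a$, this says $[\rho(x)] \le [a]$, that is $\mathcal{S} \vdash \rho(x) \Rightarrow a$, and the preliminary lemma returns $\mathcal{S} \vdash x \Rightarrow a$, as required.

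The main obstacle is the verification that $\equiv$ is a congruence and that $\mathbf{A}_{\mathcal{S}}$ genuinely satisfies residuation: these are exactly where $(cut)$ is indispensable, and where one must take care that the assumptions in $\mathcal{S}$---which need not be closed under substitution and whose left-hand sides are groupoid terms rather than formulas---nevertheless yield a well-defined algebra in which all of $\mathcal{S}$ is valid. Everything else reduces to a routine rule-by-rule check matching each inference rule with a lattice-ordered or residuation property, which is why the statement may be attributed to \cite{GO10}.
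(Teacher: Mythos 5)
Your proposal is correct and is precisely the ``standard method'' the paper alludes to: the paper itself gives no proof of this lemma, deferring to \cite{GO10}, and the intended argument is exactly your soundness-by-induction plus Lindenbaum--Tarski quotient, with the $\rho$-bridge between groupoid terms and formulas handling the fact that left-hand sides of sequents live in $Fm^{\circ}_{\mathcal{L}}$. The details you flag (context monotonicity for soundness, the congruence and residuation checks for the quotient, and the correspondence between each structural rule and its equation) are the right ones and all go through as you describe.
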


\begin{Lem}[\cite{Bus16,Bus17}]
\label{compmodalfree2}
Let $\mathcal{S} \cup \{s\}$ be a set of $\mathcal{L}^0$-sequents and $R \subseteq \{e,c,w\}$. Then,
	\[
	\mathcal{S} \vdash_{\mathbf{InFNL}_R} s \iff \mathcal{S} \models_{\mathsf{InRLUG}_{\mathsf{R}}}s,
	\]
	and
	\[
	\mathcal{S} \vdash_{\mathbf{CyInFNL}_R} s \iff \mathcal{S} \models_{\mathsf{CyInRLUG}_{\mathsf{R}}}s.
	\]
\end{Lem}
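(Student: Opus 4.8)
The plan is to read each biconditional as a strong soundness-and-completeness (in fact, algebraizability) statement and to prove the two directions separately; the arguments for $\mathbf{InFNL}_R$ and $\mathbf{CyInFNL}_R$ run in parallel, the cyclic case differing only by one additional equation. For the soundness direction ($\Rightarrow$) I would argue by induction on the length of a proof of $s$ from $\mathcal{S}$ in $\mathbf{InFNL}_R$, having fixed an arbitrary $\mathbf{A}\in\mathsf{InRLUG}_{\mathsf{R}}$ and a valuation $f$ into $\mathbf{A}$ for which $\models_{\mathbf{A},f}s'$ holds for all $s'\in\mathcal{S}$. Each initial sequent is checked to be true under $f$: $(\mathrm{Id})$, $(\Rightarrow 1)$ and $(0\Rightarrow)$ reduce to reflexivity of $\leq$ and to the interpretation of the constants $1,0$, while the classical initial sequents $(\mathrm{DNE1})$, $(\mathrm{DNE2})$, $(\mathrm{COMP})$ are exactly the defining conditions $\negrl x = x = \neglr x$ and $\negr x/y = x\backslash\!\!\negl y$ of an involutive $r\ell uz$-groupoid. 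Each inference rule is then shown to preserve truth, using residuation for the $(\backslash\Rightarrow),(\Rightarrow\backslash),(/\Rightarrow),(\Rightarrow/)$ rules, the lattice structure for the $\land,\lor$ rules, and $(cut)$ for transitivity; the structural rules $(e),(c),(w)$ preserve truth precisely because $\mathbf{A}$ validates the corresponding equations $(\mathsf{e}),(\mathsf{c}),(\mathsf{w})$ that, for the given $R\subseteq\{e,c,w\}$, axiomatize $\mathsf{InRLUG}_{\mathsf{R}}$. For the cyclic case the only addition is the initial sequent $\negr a\Leftrightarrow\negl a$, valid exactly because a member of $\mathsf{CyInRLUG}_{\mathsf{R}}$ satisfies $\negr x=\negl x$.

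For the completeness direction ($\Leftarrow$) I would run a Lindenbaum–Tarski construction relative to $\mathcal{S}$. Define a binary relation on $Fm_{\mathcal{L}^0}$ by $a\preceq b$ iff $\mathcal{S}\vdash_{\mathbf{InFNL}_R}a\Rightarrow b$; this is a preorder by $(\mathrm{Id})$ and $(cut)$, and the induced relation $a\equiv b$ (meaning $a\preceq b$ and $b\preceq a$) is a congruence for every operation symbol of $\mathcal{L}^0$, the requisite monotonicity and compatibility of $\cdot,\backslash,/,\land,\lor$ following from the logical rules together with $(cut)$. Writing $\mathbf{A}_{\mathcal{S}}$ for the quotient $\mathbf{Fm}_{\mathcal{L}^0}/{\equiv}$ with the induced order, the connective rules show that $\mathbf{A}_{\mathcal{S}}$ is an $r\ell uz$-groupoid with unit $[1]$ and zero $[0]$. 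Since a right-hand side may be the empty stoup, I would first record, using $(\Rightarrow 0)$ and $(0\Rightarrow)$, that $x\Rightarrow\epsilon$ and $x\Rightarrow 0$ are interderivable from any $\mathcal{S}$, so $\epsilon$ may be uniformly replaced by the formula $0$; together with the interchangeability of the structural $\circ$ and the connective $\cdot$ afforded by $(\cdot\Rightarrow)$ and $(\Rightarrow\cdot)$ (so that $\mathcal{S}\vdash x\Rightarrow\delta$ iff $\mathcal{S}\vdash\rho(x)\Rightarrow\delta$), the canonical valuation $g(p)=[p]$ satisfies $\models_{\mathbf{A}_{\mathcal{S}},g}x\Rightarrow\delta$ iff $\mathcal{S}\vdash_{\mathbf{InFNL}_R}x\Rightarrow\delta$, and in particular $\models_{\mathbf{A}_{\mathcal{S}},g}s'$ for every $s'\in\mathcal{S}$.

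It then remains to place $\mathbf{A}_{\mathcal{S}}$ in the intended variety and close the loop. The initial sequents $(\mathrm{DNE1}),(\mathrm{DNE2}),(\mathrm{COMP})$ force $\negrl a\equiv a\equiv\neglr a$ and $\negr a/b\equiv a\backslash\!\!\negl b$ for all $a,b$, i.e. the defining equations of $\mathsf{InRLUG}$ hold in $\mathbf{A}_{\mathcal{S}}$; each structural rule coming from $R$ forces the matching equation, so $\mathbf{A}_{\mathcal{S}}\in\mathsf{InRLUG}_{\mathsf{R}}$ (and, with $\negr a\equiv\negl a$ supplied by $\negr a\Leftrightarrow\negl a$, one gets $\mathbf{A}_{\mathcal{S}}\in\mathsf{CyInRLUG}_{\mathsf{R}}$ in the cyclic case). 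Now assume $\mathcal{S}\models_{\mathsf{InRLUG}_{\mathsf{R}}}s$. Instantiating at $\mathbf{A}_{\mathcal{S}}$ and $g$, the already-verified truth of every $s'\in\mathcal{S}$ under $g$ yields $\models_{\mathbf{A}_{\mathcal{S}},g}s$, whence $\mathcal{S}\vdash_{\mathbf{InFNL}_R}s$ by the characterisation of the canonical valuation.

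The main obstacle is the verification that the quotient is \emph{genuinely} involutive, together with the congruence checks. Unlike the commutative, associative setting one must track both residuals $\backslash,/$ and both negations $\negr{(\cdot)},\negl{(\cdot)}$ at once, and confirm that the full order-theoretic involutive conditions — not merely the two double-negation laws — hold in $\mathbf{A}_{\mathcal{S}}$; this is exactly where $(\mathrm{COMP})$, encoding $\negr x/y=x\backslash\!\!\negl y$, is indispensable and where non-commutativity blocks the usual shortcut of deriving one negation law from the other. A secondary point requiring care is the systematic elimination of the empty stoup in favour of $0$, so that the semantic clause $f(\rho(x))\leq 0$ matches deducibility uniformly. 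Once these are settled, the remaining rule-by-rule verifications are routine.
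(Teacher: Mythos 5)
Your proposal is correct: the soundness induction plus the Lindenbaum--Tarski quotient relative to $\mathcal{S}$, with the empty stoup eliminated via $(0\Rightarrow)/(\Rightarrow 0)$ and the involutive equations read off from (DNE1), (DNE2), (COMP) (and $\negr a\Leftrightarrow\negl a$ in the cyclic case), is precisely the ``standard method'' the paper invokes when it states this lemma with a citation to Buszkowski rather than giving a proof. No substantive divergence from the intended argument.
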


Next, we introduce several types of $\mathcal{L}_{\oc}$-algebras (or $\mathcal{L}^0_{\oc}$-algebras).
An \emph{interior residuated lattice-ordered unital groupoid} (\emph{interior $r\ell u$-groupoid}, for short) is an algebra $(A,\wedge,\vee,\cdot,\backslash,/,\oc,1)$, where $(A,\wedge,\vee,\cdot,\backslash,/,1)$ is an $r\ell u$-groupoid and $\oc$ is a unary operation on $A$ such that for any $x,y \in A$, $1 \leq \oc 1$, $\oc x \cdot \oc y \leq \oc (x \cdot y)$, $\oc x \leq x$, $\oc x \leq \oc \oc x$, and $x \leq y$ implies $\oc x \leq \oc y$. 
In a nutshell, $\oc$ is a \emph{conucleus}. 
The class of interior $r \ell u$-groupoids forms a variety, since the monotonicity of the operation $\oc$ can be replaced with the equation $\oc(x \land y) \leq \oc y$. 
Interior $r \ell uz$-groupoids are defined in an obvious fashion. 
Moreover, we recall some subvarieties of interior $r\ell u(z)$-groupoids, using terminology from \cite{Tan19}. 
An \emph{NACILL-algebra} is an interior $r \ell u$-groupoid satisfying the equations $\oc x \leq 1$, $\oc x \leq \oc x \cdot  \oc x$, $\oc x \cdot y=y \cdot \oc x$, $\oc x \cdot (y \cdot z)=(\oc x \cdot y)\cdot z$, and $x \cdot (y \cdot \oc z)=(x \cdot y)\cdot \oc z$, which are denoted by $(\oc\mathsf{i})$, $(\oc \mathsf{c})$, $(\oc \mathsf{e})$, $(\oc\mathsf{a1})$, and $(\oc\mathsf{a2})$, respectively. 
We write $\mathsf{NACILL}$ for the variety of NACILL-algebras, and $\mathsf{NACILL}_{\mathsf{R}}$ for the subvariety of $\mathsf{NACILL}$ axiomatized by $\mathsf{R} \subseteq \{\mathsf{e},\mathsf{c},\mathsf{i}\}$. 
A member of $\mathsf{NACILL}_{\mathsf{R}}$ is called an \emph{NACILL$_{\mathsf{R}}$}-algebra.
An \emph{NACILL$^0$-algebra} is an algebra $(A,\wedge,\vee,\cdot,\backslash,/,\oc,1,0)$, where $(A,\wedge,\vee,\cdot,\backslash,/,1,0)$ is an $r \ell uz$-groupoid and $\oc$ is a conucleus satisfying the equations $(\oc\mathsf{i})$, $(\oc \mathsf{c})$, $(\oc \mathsf{e})$, $(\oc\mathsf{a1})$, and $(\oc\mathsf{a2})$. 
One defines involutive versions of NACILL$^0$-algebras in a natural way. 
An \emph{NACCLL$^-$-algebra} is an algebra $(A,\wedge,\vee,\cdot,\backslash,/,\oc,1,0)$ such that $(A,\wedge,\vee,\cdot,\backslash,/,1,0)$ is an involutive $r \ell uz$-groupoid and $\oc$ is a conucleus satisfying the equations of $(\oc\mathsf{i})$, $(\oc \mathsf{c})$, $(\oc \mathsf{e})$, $(\oc\mathsf{a1})$, and $(\oc\mathsf{a2})$. 
An \emph{NACCLL}-algebra is just a cyclic NACCLL$^-$-algebra. 
One defines the validity of $\mathcal{L}_{\oc}$-sequents (resp. $\mathcal{L}^0_{\oc}$-sequents) with respect to NACILL-algebras (resp. NACILL$^0$-algebras) in a natural way. 
Similarly to Lemmas~\ref{compmodalfree1} and \ref{compmodalfree2}, one has the following completeness results:
\begin{Lem}
	\label{compNACILL}
	Let $\mathcal{S} \cup \{s\}$ be a set of $\mathcal{L}_{\oc}$-sequents and $R \subseteq \{e,c,i\}$. Then,
				\[
				\mathcal{S} \vdash_{\mathbf{NACILL}_R} s \iff \mathcal{S} \models_{\mathsf{NACILL}_{\mathsf{R}}}s.
				\]
\end{Lem}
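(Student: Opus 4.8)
The plan is to prove the two inclusions separately, following the standard algebraic pattern already used for Lemmas~\ref{compmodalfree1} and \ref{compmodalfree2}. For soundness (the left-to-right direction), I would fix an arbitrary $\mathsf{NACILL}_{\mathsf{R}}$-algebra $\mathbf{A}$ and a valuation $f$ into $\mathbf{A}$ with $\models_{\mathbf{A},f} x' \Rightarrow a'$ for every $x' \Rightarrow a' \in \mathcal{S}$, and argue by induction on the height of a cut-admissible proof of $s$ from $\mathcal{S}$ that every sequent occurring in it is true under $f$. A preliminary observation is needed: for each unary linear polynomial $u$ the assignment sending the element $f(\rho(x))$ to $f(\rho(u(x)))$ is order-preserving, so that a rule acting inside a context $u$ reduces to its context-free core. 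With this in hand, the residuation law validates the rules for $\cdot,\backslash,/$; the lattice structure validates the rules for $\land,\lor$; the conucleus axioms $1 \leq \oc 1$, $\oc x \leq x$, $\oc x \leq \oc\oc x$, and $\oc x \cdot \oc y \leq \oc(x\cdot y)$ validate $(\oc\Rightarrow)$ and $(\Rightarrow\oc)$; the distinguished equations $(\oc\mathsf{i}),(\oc\mathsf{c}),(\oc\mathsf{e}),(\oc\mathsf{a1}),(\oc\mathsf{a2})$ validate the $K$-rules $(kw),(kc),(ke),(ka1),(ka2)$; and the equations $(\mathsf{e}),(\mathsf{c}),(\mathsf{i})$ corresponding to $R$ validate the structural rules of $R$. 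For the invertible rules $(ke),(ka1),(ka2)$ the equalities supply both directions.

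For completeness (the right-to-left direction) I would run the Lindenbaum--Tarski construction relative to $\mathcal{S}$. Define $a \preceq b$ to mean $\mathcal{S} \vdash_{\mathbf{NACILL}_R} a \Rightarrow b$; this is a preorder by (Id) and $(cut)$. Let $\Theta$ be the induced equivalence ($a \equiv b$ iff $a \preceq b$ and $b \preceq a$), which the logical rules show to be a congruence on $\mathbf{Fm}_{\mathcal{L}_{\oc}}$ with respect to all of $\cdot,\land,\lor,\backslash,/,\oc$. Set $\mathbf{A}_{\mathcal{S}} = \mathbf{Fm}_{\mathcal{L}_{\oc}}/\Theta$ with $[a] \leq [b]$ iff $a \preceq b$. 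I would then verify that $\mathbf{A}_{\mathcal{S}}$ lies in $\mathsf{NACILL}_{\mathsf{R}}$: residuation from the $\backslash,/,\cdot$ rules, the lattice from the $\land,\lor$ rules, the conucleus properties together with the five distinguished equations from the $\oc$-rules and the $K$-rules, and the $R$-equations from the corresponding structural rules. Using the product rules $(\cdot\Rightarrow),(\Rightarrow\cdot)$ one checks $\mathcal{S} \vdash x \Rightarrow a$ iff $\mathcal{S} \vdash \rho(x) \Rightarrow a$, so that under the canonical valuation $f(p)=[p]$ one has $\models_{\mathbf{A}_{\mathcal{S}},f} x \Rightarrow a$ iff $[\rho(x)] \leq [a]$ iff $\mathcal{S} \vdash_{\mathbf{NACILL}_R} x \Rightarrow a$. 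In particular every member of $\mathcal{S}$ is true under $f$, so if $\mathcal{S} \not\vdash_{\mathbf{NACILL}_R} s$ then $s$ fails in $\mathbf{A}_{\mathcal{S}}$ under $f$, giving $\mathcal{S} \not\models_{\mathsf{NACILL}_{\mathsf{R}}} s$.

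The main obstacle I expect is matching the five $\oc$-equations with the $K$-rules in both directions, because the rules $(kw),(kc),(ke),(ka1),(ka2)$ are formulated for $k$ ranging over the free unital groupoid $K_{\mathcal{L}_{\oc}}$ generated by the formulas $\oc a$, rather than for single exponential formulas. On the soundness side I must show the equations suffice to validate every instance where $k$ is an iterated $\circ$-product of $\oc$-formulas; on the completeness side I must show that in $\mathbf{A}_{\mathcal{S}}$ each $\rho(k)$ is $\le$-comparable to a term built from images of $\oc$ in the expected way, so that $(\oc\mathsf{e}),(\oc\mathsf{a1}),(\oc\mathsf{a2})$ indeed hold and, conversely, that these equations let one simulate every application of the $K$-rules. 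Carefully tracking which rule produces which inequality (monotonicity of $\oc$ from $(\oc\Rightarrow),(\Rightarrow\oc)$; idempotence, commutation, and the two associativity features from the $K$-rules) is where the real work lies; the remaining verifications are routine and parallel to Lemmas~\ref{compmodalfree1} and \ref{compmodalfree2}.
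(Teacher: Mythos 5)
Your proposal is correct and follows exactly the ``standard method'' the paper invokes: soundness by induction on derivations (with the order-preservation of unary linear polynomials handling rules applied inside contexts, including cut) plus a Lindenbaum--Tarski construction for completeness, which is the same route the paper implicitly takes by asserting the lemma ``similarly to'' Lemmas~\ref{compmodalfree1} and \ref{compmodalfree2} without writing out details. You also correctly isolate the only genuinely non-routine point, namely that the set of interpretations of elements of $K_{\mathcal{L}_{\oc}}$ (i.e.\ $1$, the $\oc a$, and their products) must be shown closed under multiplication while retaining integrality, square-increase, centrality, and the two associativity properties, which is what makes the $K$-rules sound and, conversely, lets the $K$-rules yield the equations $(\oc\mathsf{i})$--$(\oc\mathsf{a2})$ in the quotient algebra.
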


\begin{Lem}
	\label{compNACILL0}
	Let $\mathcal{S} \cup \{s\}$ be a set of $\mathcal{L}^0_{\oc}$-sequents and $R \subseteq \{e,c,i,o\}$. Then,
	\[
	\mathcal{S} \vdash_{\mathbf{NACILL}^0_R} s \iff \mathcal{S} \models_{\mathsf{NACILL}^0_{\mathsf{R}}}s.
	\]
\end{Lem}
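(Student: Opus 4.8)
The plan is to prove both directions by the standard argument for algebraic completeness, exactly paralleling the proofs of Lemmas~\ref{compmodalfree1} and~\ref{compmodalfree2}. Soundness (the $\Rightarrow$ direction) will be shown by induction on the structure of a proof of $s$ in $\mathbf{NACILL}^0_R$ from $\mathcal{S}$, and completeness (the $\Leftarrow$ direction) by the Lindenbaum--Tarski construction of a canonical term model.

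For soundness I would fix a member $\mathbf{A}$ of $\mathsf{NACILL}^0_{\mathsf{R}}$ and a valuation $f$ with $\models_{\mathbf{A},f} x' \Rightarrow \delta'$ for all $(x' \Rightarrow \delta') \in \mathcal{S}$, and verify that every leaf of the proof is true under $f$ (the initial sequents hold in any $r\ell uz$-groupoid, and members of $\mathcal{S}$ hold by assumption) and that each inference rule preserves truth. The logical rules for $\wedge,\vee,\cdot,\backslash,/,1,0$ reduce directly to the lattice order and the residuation law, and the structural rules associated with $R$ reduce to the equations $(\mathsf{e}),(\mathsf{c}),(\mathsf{i}),(\mathsf{o})$. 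The delicate part is the exponential block: first I would establish, by induction on the groupoid-term structure of $k \in K_{\mathcal{L}^0_{\oc}}$, that $f(\rho(k))$ satisfies $f(\rho(k)) \leq 1$, $f(\rho(k)) \leq f(\rho(k)) \cdot f(\rho(k))$, and commutes and reassociates freely with every element of $\mathbf{A}$. The base case $k = \oc a$ is precisely the axioms $(\oc\mathsf{i}),(\oc\mathsf{c}),(\oc\mathsf{e}),(\oc\mathsf{a1}),(\oc\mathsf{a2})$, and the inductive step $k = k_1 \circ k_2$ combines these using centrality and the two restricted associativities to rearrange products. Granting this helper lemma, soundness of $(kw),(kc),(ke),(ka1),(ka2)$ follows by monotonicity, while soundness of $(\oc\Rightarrow)$ and $(\Rightarrow\oc)$ follows from $\oc x \leq x$ together with the conucleus conditions.

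For completeness I would assume $\mathcal{S} \not\vdash_{\mathbf{NACILL}^0_R} s$ and build a refuting model. Define $a \leq b$ iff $\mathcal{S} \vdash_{\mathbf{NACILL}^0_R} a \Rightarrow b$; this is a preorder by $(Id)$ and $(cut)$, and the logical rules make the induced equivalence $\equiv$ a congruence on $\mathbf{Fm}_{\mathcal{L}^0_{\oc}}$, so the quotient $\mathbf{A} = \mathbf{Fm}_{\mathcal{L}^0_{\oc}}/{\equiv}$ is a well-defined $\mathcal{L}^0_{\oc}$-algebra ordered by $\leq$. I would then check that $\mathbf{A}$ lies in $\mathsf{NACILL}^0_{\mathsf{R}}$ by reading each defining (in)equation off a provable sequent: for instance $\oc x \leq x$ from $\oc a \Rightarrow a$, and $1 \leq \oc 1$ from $\varepsilon \Rightarrow \oc 1$, while $(\oc\mathsf{i}),(\oc\mathsf{c}),(\oc\mathsf{e}),(\oc\mathsf{a1}),(\oc\mathsf{a2})$ come from the corresponding instances of $(kw),(kc),(ke),(ka1),(ka2)$, and the equations in $\mathsf{R}$ come from the structural rules in $R$. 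With the canonical valuation $f(p) = [p]$, I would use $(\cdot\Rightarrow)$ and $(\Rightarrow\cdot)$ to prove the bridging lemma $\mathcal{S} \vdash x \Rightarrow a \iff \mathcal{S} \vdash \rho(x) \Rightarrow a$, and $(\Rightarrow 0)$ and $(0\Rightarrow)$ with $(cut)$ to prove $\mathcal{S} \vdash x \Rightarrow \epsilon \iff \mathcal{S} \vdash \rho(x) \Rightarrow 0$; together these give $\models_{\mathbf{A},f} x \Rightarrow \delta \iff \mathcal{S} \vdash x \Rightarrow \delta$. Hence every sequent of $\mathcal{S}$ is true under $f$ whereas $s$ is not, contradicting $\mathcal{S} \models_{\mathsf{NACILL}^0_{\mathsf{R}}} s$.

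I expect the main obstacle to be the helper lemma on $f(\rho(k))$ in the soundness direction, equivalently the verification that the term model validates $(\oc\mathsf{e}),(\oc\mathsf{a1}),(\oc\mathsf{a2})$: getting centrality and partial associativity to propagate through composite $K$-terms is where the bookkeeping is genuinely required, whereas the lattice, residuation, and structural clauses are routine and mirror the established cases of Lemmas~\ref{compmodalfree1} and~\ref{compmodalfree2}.
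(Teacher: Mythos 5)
Your proposal is correct and is precisely the ``standard method'' the paper invokes without writing out: soundness by induction on derivations (with the auxiliary observation that every element denoted by a $K$-term inherits integrality, square-increasingness, centrality, and the restricted associativities from the $(\oc\mathsf{i})$--$(\oc\mathsf{a2})$ axioms) and completeness via the Lindenbaum--Tarski quotient. The paper offers no proof of this lemma beyond the remark that it follows ``similarly to'' the earlier completeness lemmas, so there is nothing to contrast with; your identification of the exponential block and the closure of the central/associating elements under products as the only non-routine point is accurate.
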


\begin{Lem}
	\label{compNACCLL-}
	Let $\mathcal{S} \cup \{s\}$ be a set of $\mathcal{L}^0_{\oc}$-sequents and $R \subseteq \{e,c,w\}$. Then,
		\[
		\mathcal{S} \vdash_{\mathbf{NACCLL}^-_R} s \iff \mathcal{S} \models_{\mathsf{NACCLL}^-_{\mathsf{R}}}s,
		\]
		and
		\[
		\mathcal{S} \vdash_{\mathbf{NACCLL}_R} s \iff \mathcal{S} \models_{\mathsf{NACCLL}_{\mathsf{R}}}s.
		\]
\end{Lem}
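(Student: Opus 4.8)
The plan is to establish both equivalences by the standard algebraic (Lindenbaum--Tarski) method, exactly as one proves Lemmas~\ref{compmodalfree1} and \ref{compmodalfree2}; I will carry out $\mathbf{NACCLL}^-$ in detail and indicate the single extra step needed for $\mathbf{NACCLL}$. Throughout I identify a groupoid term $x \in Fm^\circ_{\mathcal{L}^0_{\oc}}$ with the formula $\rho(x)$: since the rules $(\cdot \Rightarrow)$, $(\Rightarrow \cdot)$ together with $(cut)$ give $\mathcal{S} \vdash x \Rightarrow a$ iff $\mathcal{S} \vdash \rho(x) \Rightarrow a$, and $(\Rightarrow 0),(0\Rightarrow)$ give $\mathcal{S}\vdash x \Rightarrow \epsilon$ iff $\mathcal{S}\vdash x \Rightarrow 0$, it suffices to reason about sequents whose antecedent is a single formula.

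Soundness (the left-to-right direction) is a routine induction on the length of a proof of $s$ from $\mathcal{S}$. The base-logic rules are handled exactly as for $\mathbf{FNL}$ and $\mathbf{InFNL}$; the structural rules in $R$ are sound because $(\mathsf{e}),(\mathsf{c}),(\mathsf{w})$ hold in every member of $\mathsf{NACCLL}^-_{\mathsf{R}}$; and the three new initial sequents are valid by construction, since in an involutive $r\ell uz$-groupoid $(DNE1)$ and $(DNE2)$ express $\negrl x = x = \neglr x$ while $(COMP)$ expresses $\negr x / y = x \backslash \negl y$. The conucleus rules are sound because $\oc$ satisfies $(\oc\mathsf{i}),(\oc\mathsf{c}),(\oc\mathsf{e}),(\oc\mathsf{a1}),(\oc\mathsf{a2})$ in an $\mathsf{NACCLL}^-$-algebra.

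For completeness (right-to-left) I build the Lindenbaum--Tarski algebra relative to $\mathcal{S}$. Define $a \leq_{\mathcal{S}} b$ iff $\mathcal{S} \vdash_{\mathbf{NACCLL}^-_R} a \Rightarrow b$; by $(Id)$ and $(cut)$ this is a preorder, and by the logical rules it is compatible with $\cdot,\land,\lor,\backslash,/,\oc$, so the induced relation $\equiv_{\mathcal{S}}$ is a congruence on $\mathbf{Fm}_{\mathcal{L}^0_{\oc}}$. Let $\mathbf{A} = \mathbf{Fm}_{\mathcal{L}^0_{\oc}}/{\equiv_{\mathcal{S}}}$ with the partial order induced by $\leq_{\mathcal{S}}$. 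The residuation rules $(\backslash\Rightarrow),(\Rightarrow\backslash),(/\Rightarrow),(\Rightarrow/)$ make $\mathbf{A}$ an $r\ell uz$-groupoid; the conucleus rules make $\oc^{\mathbf{A}}$ a conucleus satisfying the five $\oc$-equations; and the structural rules in $R$ force the corresponding equations. Let $f$ be the canonical valuation $f(p) = [p]$. A direct check gives $\models_{\mathbf{A},f} x \Rightarrow \delta$ iff $\mathcal{S} \vdash x \Rightarrow \delta$; hence every member of $\mathcal{S}$ is true under $f$, whereas $s$ fails under $f$ whenever $\mathcal{S}\not\vdash s$. Since $\mathbf{A} \in \mathsf{NACCLL}^-_{\mathsf{R}}$, this witnesses $\mathcal{S}\not\models_{\mathsf{NACCLL}^-_{\mathsf{R}}} s$, the contrapositive of the claim.

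The main obstacle is verifying that $\mathbf{A}$ is genuinely involutive, that is, $\negrl{[a]} = [a] = \neglr{[a]}$ and $\negr{[a]}/[b] = [a]\backslash \negl{[b]}$. The nontrivial inclusions come precisely from the new initial sequents: $(DNE1)$ and $(DNE2)$ give $\negrl{[a]} \leq [a]$ and $\neglr{[a]} \leq [a]$, while the reverse inequalities $[a] \leq \negrl{[a]}$ and $[a] \leq \neglr{[a]}$ are already theorems of $\mathbf{NACILL}^0$; and $(COMP)$ supplies the two-sided inequality yielding $\negr{[a]}/[b] = [a]\backslash \negl{[b]}$. For the cyclic case one adds the initial sequent $\negr a \Leftrightarrow \negl a$, which forces $\negr{[a]} = \negl{[a]}$, so that $\mathbf{A}$ lies in $\mathsf{CyInRLUG}$ and hence is an $\mathsf{NACCLL}$-algebra; the remainder of the argument is identical. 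Once membership in the correct variety is secured, both equivalences follow at once.
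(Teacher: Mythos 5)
Your proposal is correct and follows exactly the route the paper intends: the paper gives no explicit proof of this lemma, merely asserting it "similarly to" Lemmas~\ref{compmodalfree1} and \ref{compmodalfree2}, which are themselves established by the standard Lindenbaum--Tarski method you carry out (soundness by induction on proofs, completeness via the quotient of the formula algebra by interderivability, with the new initial sequents supplying involutivity and the $k$-rules supplying the conucleus equations). Nothing further is needed.
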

In the rest of this section, we recall several notions which commonly appear in a wide range of arguments about residuated structures. 
For full discussions about these notions, we refer the reader to \cite{GJ13,GJKO07}.

Let $P$ be a poset. 
A map $cl \colon P \to P$ is called a \emph{closure operator} on $P$ if $x \leq cl(x)$ and $cl(cl(x)) \leq cl(x)$ hold, and $x \leq y$ implies $cl(x) \leq cl(y)$, for any $x,y \in P$. 
If additionally $P$ is a partially-ordered groupoid and $cl(x) \cdot cl(y) \leq cl(x \cdot y)$ holds for all $x,y \in P$, $cl$ is called a \emph{nucleus} on $P$. 
Given an $r\ell u$-groupoid $\mathbf{A}=(A,\land,\lor,\cdot,\backslash,/,1)$ and a nucleus $\gamma$ on $\mathbf{A}$, the algebra $\gamma({\mathbf{A}})=(\gamma[A],\wedge,\vee_{\gamma},\cdot_{\gamma},\backslash,/,\gamma(1))$, where $x \vee_{\gamma} y=\gamma(x \vee y)$ and $x \cdot_{\gamma} y=\gamma(x \cdot y)$, forms an $r \ell u$-groupoid.  

Let $A,B$ be sets, and $R$ a relation between $A$ and $B$. For any $X \in \mathcal{P}(A)$ and $Y \in \mathcal{P}(B)$, we put:
\begin{align*}
X^{\rhd}&:=\{b \in B \mid \forall x \in X (x \R b)\}, \\
Y^{\lhd}&:=\{a \in A \mid \forall y \in Y (a \R y)\}.
\end{align*}
The pair of the maps $^{\rhd} \colon \mathcal{P}(A) \to \mathcal{P}(B)$ and $^{\lhd}\colon \mathcal{P}(B) \to \mathcal{P}(A)$ is known to be a Galois connection. 
Thus the map $\gamma_R$ on $\mathcal{P}(A)$ defined by $\gamma_R(X)=X^{\rhd\lhd}$ forms a closure operator on $\mathcal{P}(A)$. 
$X \in \mathcal{P}(A)$ is called a \emph{closed set} if $X=\gamma_R(X)$. 
A family $\mathcal{X}$ of closed sets is called a \emph{basis} for $\gamma_R$ if any closed set $X$ is equal to intersections of elements of $\mathcal{X}$. 
We end this section by summarizing several basic properties of the maps $^{\rhd}$ and $^{\lhd}$:

\begin{Lem}[\cite{GJ13,GJKO07}]
	\label{closure}
	Let $A,B$ be sets, and $R$ a relation between $A$ and $B$. Then, the following hold:
	\begin{enumerate}
		\item The map $\gamma_R$ is a closure operator on $\mathcal{P}(A)$. 
		\item For any $X_1,X_2 \in \mathcal{P}(A)$, if $X_1 \subseteq X_2$, then $X_2^{\rhd} \subseteq X_1^{\rhd}$. 
		\item For any $Y_1,Y_2 \in \mathcal{P}(B)$, if $Y_1 \subseteq Y_2$, then $Y_2^{\lhd} \subseteq Y_1^{\lhd}$.
		\item For any $X \in \mathcal{P}(A)$ and $Y \in \mathcal{P}(B)$,  $X^{\rhd\lhd\rhd}=X^{\rhd}$ and $Y^{\lhd}=Y^{\lhd\rhd\lhd}$.
		\item For any $X \in \gamma_R[\mathcal{P}(A)]$, $X=\bigcap\{\{b\}^{\lhd} \mid b \in B, X \subseteq \{b\}^{\lhd}\}$, i.e., the set $\{\{b\}^{\lhd} \mid b \in B\}$ is a basis for $\gamma_R$. 
		\item Specifically, if $A$ is a groupoid and $R$ is a nuclear relation, i.e., for any $x,y \in A$ and $z \in B$, there exist elements $x \dbackslash z$ and $z \dslash y$ in $B$ such that
		\[
		x \cdot y \R z \ifff y \R x \dbackslash z \ifff x \R z \dslash y, 
		\]
		then $\gamma_R$ is a nucleus on the powerset groupoid $(\mathcal{P}(A),\circ)$, where $X \circ Y=\{x \cdot y \mid x \in X, y \in Y\}$.
	\end{enumerate}
\end{Lem}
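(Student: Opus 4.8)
The plan is to treat parts (1)--(5) as formal consequences of the fact that $(^{\rhd},{}^{\lhd})$ is a Galois connection, so that only part (6) actually invokes the nuclearity hypothesis. The linchpin is the adjunction
\[
Y \subseteq X^{\rhd} \quad\Longleftrightarrow\quad X \subseteq Y^{\lhd},
\]
valid for all $X \in \mathcal{P}(A)$ and $Y \in \mathcal{P}(B)$, since both sides assert exactly that $x \R y$ holds for every $x \in X$ and every $y \in Y$; this is immediate upon unwinding the definitions of $^{\rhd}$ and $^{\lhd}$. The antitonicity statements (2) and (3) are equally direct: if $X_1 \subseteq X_2$, then any $b$ related to all of $X_2$ is related to all of $X_1$, whence $X_2^{\rhd} \subseteq X_1^{\rhd}$, and symmetrically for (3).

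Next I would record the two inflationary inclusions $X \subseteq X^{\rhd\lhd}$ and $Y \subseteq Y^{\lhd\rhd}$, each obtained by specializing the adjunction (taking $Y=X^{\rhd}$, resp. $X=Y^{\lhd}$) or directly from the definitions. Part (4) then falls out purely formally: applying the antitone map $^{\rhd}$ to $X \subseteq X^{\rhd\lhd}$ gives $X^{\rhd\lhd\rhd} \subseteq X^{\rhd}$, while instantiating the inflationary inclusion at $X^{\rhd}$ gives the reverse containment $X^{\rhd} \subseteq X^{\rhd\lhd\rhd}$; the identity $Y^{\lhd}=Y^{\lhd\rhd\lhd}$ is dual. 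With (2)--(4) available, part (1) is routine: monotonicity of $\gamma_R$ is the composite of two antitone maps, the inequality $X \subseteq \gamma_R(X)$ is the inflationary inclusion, and the contractive property $\gamma_R(\gamma_R(X)) \subseteq \gamma_R(X)$ follows by applying $^{\lhd}$ to the first identity of (4). For part (5), I would observe that $b \in X^{\rhd}$ holds precisely when $X \subseteq \{b\}^{\lhd}$, and then rewrite $X^{\rhd\lhd}$ as $\bigcap_{b \in X^{\rhd}} \{b\}^{\lhd}$ straight from the definition of $^{\lhd}$; for a closed set $X$ this yields the asserted basis representation.

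The main obstacle is part (6), which is the only place where the nuclearity of $R$ is used. As $\gamma_R$ is already a closure operator by (1), it remains to establish $\gamma_R(X) \circ \gamma_R(Y) \subseteq \gamma_R(X \circ Y)$, i.e. that $a \cdot b \in (X \circ Y)^{\rhd\lhd}$ whenever $a \in X^{\rhd\lhd}$ and $b \in Y^{\rhd\lhd}$. I would fix an arbitrary $z \in (X \circ Y)^{\rhd}$ and slide the nuclear residuals twice. From $x \cdot y \R z$ for all $x \in X$ and $y \in Y$, nuclearity gives $z \dslash y \in X^{\rhd}$ for each $y \in Y$, so $a \in X^{\rhd\lhd}$ yields $a \R z \dslash y$, and sliding back produces $a \cdot y \R z$ for all $y \in Y$; repeating the manoeuvre on the other residual gives $a \dbackslash z \in Y^{\rhd}$, whence $b \in Y^{\rhd\lhd}$ forces $b \R a \dbackslash z$, and a final slide delivers $a \cdot b \R z$. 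Since $z \in (X \circ Y)^{\rhd}$ was arbitrary, $a \cdot b \in (X \circ Y)^{\rhd\lhd}=\gamma_R(X \circ Y)$, as required. The one point demanding care is the bookkeeping of the left and right residuals $\dbackslash$ and $\dslash$ in the non-commutative, non-associative setting, so that each use of the nuclear equivalence is applied on the correct side; notably, no associativity is consumed, so the argument is valid for arbitrary groupoids.
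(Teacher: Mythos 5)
Your proposal is correct. The paper does not prove Lemma~\ref{closure} at all --- it is quoted from \cite{GJ13,GJKO07} without proof --- and your argument is precisely the standard one from those sources: parts (1)--(5) are formal consequences of the Galois adjunction $Y \subseteq X^{\rhd} \Leftrightarrow X \subseteq Y^{\lhd}$, and part (6) is the two-step residual-sliding argument, which as you note consumes no associativity or commutativity and so is valid for arbitrary groupoids.
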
  

\section{Decision Problems for Non-associative Intuitionistic Linear Logic and Extensions}
\label{main}
By extending the notion of residuated frame in \cite{GJ13}, we introduce enriched residuated frames.

\begin{Def}
	\normalfont{An \emph{enriched unital residuated frame} (\emph{enriched $ru$-frame}, for short) is a tuple ${\mathbf{F}}=(G,T,N,K)$ such that:
	\begin{itemize}
		\item $(G,T,N)$ is a unital residuated frame, i.e., 
		\begin{itemize}
			\item $G=(G,\cdot,\varepsilon)$ is a unital groupoid, 
			\item $T$ is a set, and 
			\item $N \subseteq G \times T$ is a nuclear relation. 
		\end{itemize}
		\item $K$ is a subunital groupoid of $G$.
	\end{itemize} }
\end{Def}

An \emph{enriched unital residuated frame with zero} (\emph{enriched $ruz$-frame}, for short) is a structure of the form $(G,T,N,K,\epsilon)$ such that $(G,T,N,K)$ an enriched $ru$-frame and $\epsilon \in T$. 
Let ${\mathbf{F}}=(G,T,N,K)$ be an enriched $ru$-frame. 
For any $X,Y \in {\mathcal{P}}(G)$, let:
\begin{align*}
X\backslash Y&:=\{z \mid X \circ \{z\} \subseteq Y\},\\
Y/X&:=\{z \mid \{z\} \circ X \subseteq Y\}, \\
\oc X&:=\gamma_N(X \cap K).
\end{align*}
The definition of the modal operation $\oc$ comes from phase semantics in linear logic; see e.g., \cite{Laf97,OT99,vA05}. 
By Lemma~\ref{closure}, $\gamma_N$ is a nucleus on the powerset $r \ell u$-groupoid $({\mathcal{P}}(G),\cap,\cup,\circ,\backslash,/,\{\varepsilon\})$. 
Hence the dual algebra $\gamma_{N}[\mathcal{P}(\mathbf{G})]=(\gamma_{N}[{\mathcal{P}}(G)],\cap,\cup_{\gamma_{N}},\circ_{\gamma_{N}},\backslash,/,\gamma_{N}(\{\varepsilon\}))$ is a complete $r \ell u$-groupoid. 
Moreover, we define the $\mathcal{L}_{\oc}$-algebra $\mathbf{F}^+=(\gamma_{N}[{\mathcal{P}}(G)],\cap,\cup_{\gamma_{N}},\circ_{\gamma_{N}},\backslash,/,\oc,\gamma_{N}(\{\varepsilon\}))$ by adding the operation $\oc$ to $\gamma_{N}[{\mathcal{P}}(\mathbf{G})]$. 
In what follows, we stipulate that $\mathbf{F}^+$ denotes the $\mathcal{L}^0_{\oc}$-algebra  $(\gamma_{N}[{\mathcal{P}}(G)],\cap,\cup_{\gamma_{N}},\circ_{\gamma_{N}},\backslash,/,\oc,\gamma_{N}(\{\varepsilon\}),\{\epsilon\}^{\lhd})$, whenever the frame $(\mathbf{F},\mathbf{A})$ in question is an enriched $ruz$-frame. 
Then the following holds:
\begin{Thm}[\cite{Tan19}]
	\label{frame}
	If $\mathbf{F}$ is an enriched $ru$-frame (resp. enriched $ruz$-frame), then $\mathbf{F}^+$ is a complete interior $r\ell u$-groupoid (resp. complete interior $r\ell uz$-groupoid).
\end{Thm}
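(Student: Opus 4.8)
The plan is to reduce the whole statement to a verification that the added operation $\oc$ is a conucleus, since the excerpt already records, via Lemma~\ref{closure}, that the dual algebra $\gamma_N[\mathcal{P}(\mathbf{G})]$ is a \emph{complete} $r\ell u$-groupoid. Thus completeness is inherited for free: adjoining $\oc$ touches neither the underlying lattice nor the residuated structure. In the $ruz$-case, the distinguished element $\{\epsilon\}^{\lhd}$ lies in the carrier because every set of the form $Y^{\lhd}$ is $\gamma_N$-closed by Lemma~\ref{closure}(4), and since an $r\ell uz$-groupoid imposes no equation on $0$ beyond membership, nothing further is required for the zero. Hence the real work is to check that, for closed sets $X,Y$, the operation $\oc X = \gamma_N(X \cap K)$ satisfies the five defining conditions of a conucleus.

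First I would dispose of well-definedness and the order-theoretic axioms. Since $\gamma_N$ is a closure operator, $\oc X = \gamma_N(X \cap K)$ is automatically closed, so $\oc$ maps the carrier to itself. Monotonicity is immediate from monotonicity of $\cap$ and of $\gamma_N$. The contraction law $\oc X \subseteq X$ follows from $X \cap K \subseteq X$, monotonicity of $\gamma_N$, and closedness of $X$. For $\oc X \subseteq \oc\oc X$, writing $Z = X \cap K$ one has $Z \subseteq K$ and $Z \subseteq \gamma_N(Z)$, hence $Z \subseteq \gamma_N(Z)\cap K$; applying $\gamma_N$ gives $\oc X = \gamma_N(Z) \subseteq \gamma_N(\gamma_N(Z)\cap K) = \oc\oc X$. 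For $\gamma_N(\{\varepsilon\}) \subseteq \oc\gamma_N(\{\varepsilon\})$ I would use that $K$ is a subunital groupoid, so $\varepsilon \in K$; then $\{\varepsilon\} \subseteq \gamma_N(\{\varepsilon\}) \cap K$, and applying $\gamma_N$ yields the inequality exactly as for the previous law.

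The main obstacle is the multiplicativity law $\oc X \circ_{\gamma_N} \oc Y \subseteq \oc(X \circ_{\gamma_N} Y)$, which is where the groupoid structure of $K$ must enter. Here I would first invoke the nucleus identity $\gamma_N(\gamma_N(P)\circ\gamma_N(Q)) = \gamma_N(P\circ Q)$ --- a standard consequence of Lemma~\ref{closure}(6) --- to rewrite $\oc X \circ_{\gamma_N} \oc Y = \gamma_N(\gamma_N(X\cap K)\circ\gamma_N(Y\cap K)) = \gamma_N((X\cap K)\circ(Y\cap K))$. The crux is then the inclusion $(X\cap K)\circ(Y\cap K)\subseteq \gamma_N(X\circ Y)\cap K$: any generator $a\cdot b$ with $a\in X\cap K$ and $b\in Y\cap K$ lies in $X\circ Y\subseteq \gamma_N(X\circ Y)$, and lies in $K$ precisely because $K$ is closed under $\cdot$. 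Applying $\gamma_N$ to this inclusion gives $\gamma_N((X\cap K)\circ(Y\cap K))\subseteq \gamma_N(\gamma_N(X\circ Y)\cap K) = \oc(X\circ_{\gamma_N}Y)$, completing the law. Collecting the five verifications shows $\oc$ is a conucleus, so $\mathbf{F}^+$ is a complete interior $r\ell u$-groupoid, and with the remark on $\{\epsilon\}^{\lhd}$ the $ruz$-case follows verbatim.
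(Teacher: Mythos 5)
Your proposal is correct: the five conucleus conditions for $\oc X=\gamma_N(X\cap K)$ are exactly what needs checking (the complete $r\ell u$-groupoid structure and the closedness of $\{\epsilon\}^{\lhd}$ being already supplied by Lemma~\ref{closure}), and each of your verifications — in particular the use of $\varepsilon\in K$ for $1\leq\oc 1$, the closure of $K$ under $\cdot$ together with the nucleus identity $\gamma_N(\gamma_N(P)\circ\gamma_N(Q))=\gamma_N(P\circ Q)$ for multiplicativity — is sound. The paper itself gives no proof here, deferring to \cite{Tan19}, so there is nothing internal to compare against, but your argument is the standard phase-semantics-style verification one would expect.
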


The most natural example of an enriched residuated frame is constructed from an interior $r \ell u$-groupoid. 
Given an interior $r \ell u$-groupoid $\mathbf{A}$, the tuple ${\mathbf{F}_{\mathbf{A}}}=(A,A,\leq^{\mathbf{A}},A^{\oc})$, where $A^{\oc}=\{\oc x \mid x \in A\}$, is an enriched $ru$-frame, since $1 \in A^{\oc}$ and $A^{\oc}$ is closed under multiplication. 
Here, we take $x\dbackslash y=x\backslash^{\mathbf{A}}y$ and $y\dslash x=y/^{\mathbf{A}}x$. 
Then the algebra $\mathbf{F}^+_{\mathbf{A}}=(\gamma_{\leq}[{\mathcal{P}}(A)],\cap,\cup_{\gamma_{\leq}},\circ_{\gamma_{\leq}},\backslash,/,\oc,\gamma_{\leq}(\{1\}))$ is referred to as the \emph{Dedekind-MacNeille completion} of $\mathbf{A}$. 

To give another typical example of an enriched residuated frame, consider an interior $r \ell u$-groupoid $\mathbf{A}$ and a partial subalgebra $\mathbf{B}$ of $\mathbf{A}$. 
We construct the tuple $\mathbf{F}_{\mathbf{A},\mathbf{B}}=(G_B,T_B,N_B,K_B)$ as follows: 
\begin{itemize}
	\item $G_B=(G_B,\cdot,1)$ is the subunital groupoid of $\mathbf{A}$ generated by the set $B$.
	\item $T_B=U_{G_B} \times B$, where $U_{G_B}$ is the set of unary linear polynomials over $G_B$. 
	\item $N_B$ is the binary relation between $G_B$ and $T_B$ such that $x \Nb (u,b)$ iff $u(x) \leq^{\mathbf{A}} b$.
	\item $K_B$ is the subunital groupoid of $\mathbf{A}$ generated by the set $B^{\oc}=\{\oc^{\mathbf{B}}b \mid b \in \dom\oc^{\mathbf{B}}\}$.
\end{itemize}
Given $u \in U_{G_B}$ and $x,y \in G_B$, we write $u(x \cdot\_)$ (resp. $u(\_ \cdot y)$) for the unary linear polynomial which assigns $u(x \cdot y)$ to $y$ (resp. $x$). 
By setting $x \dbackslash (u,b)=(u(x\cdot \_),b)$ and $(u,b) \dslash y=(u(\_\cdot y),b)$, we have $x \cdot y \Nb (u,b)$ iff $y \Nb x \dbackslash (u,b)$ iff $x \Nb (u,b)\dslash y$, for any $x,y \in G_B$ and $(u,b) \in T_B$; thus $N_B$ forms a nuclear relation. 
Therefore $\mathbf{F}_{\mathbf{A},\mathbf{B}}$ is an enriched $ru$-frame. 
\begin{Def}
	\label{enrich}
	\normalfont{An \emph{enriched cut-free Gentzen $ru$-frame} is a tuple $({\mathbf{F}},{\mathbf{A}})$ such that:
	\begin{itemize}
		\item ${\mathbf{F}}=(G,T,N,K)$ is an enriched $ru$-frame.
		\item $\mathbf{A}$ is a partial $\mathcal{L}_{\oc}$-algebra.
		\item There are injections $\zeta \colon A \rightarrow G$, $\xi \colon A \rightarrow T$ and $\kappa \colon A^{\oc} \rightarrow K$, where $A^{\oc}=\{\oc^{\mathbf{A}}a \mid a \in \dom\oc^{\mathbf{A}}\}$. 
		\item For any $x,y \in G$, $z \in T$, $a,a_1,a_2,b \in A$, and $k \in K$, the following rules hold:
		\[
		\begin{bprooftree}
		\AxiomC{}
		\RightLabel{$[1R]$}
		\UnaryInfC{$\varepsilon \N 1^{\mathbf{A}}$}
		\end{bprooftree}
		\begin{bprooftree}
		\AxiomC{}
		\RightLabel{$[Id]$}
		\UnaryInfC{$a \N a$}
		\end{bprooftree}
		\begin{bprooftree}
		\AxiomC{$\varepsilon \N z$}
		\RightLabel{$[1L]$}
		\UnaryInfC{$1^{\mathbf{A}} \N z$}
		\end{bprooftree}
		\]
		\[
		\begin{bprooftree}
		\AxiomC{$a \cdot b \N z$}
		\RightLabel{$[\cdot L]$}
		\UnaryInfC{$a \cdot^{\mathbf{A}} b \N z$}
		\end{bprooftree}
		\begin{bprooftree}
		\AxiomC{$x \N a$}
		\AxiomC{$y \N b$}
		\RightLabel{$[\cdot R]$}
		\BinaryInfC{$x \cdot y \N a \cdot^{\mathbf{A}} b$}
		\end{bprooftree}
		\]
		\[
		\begin{bprooftree}
		\AxiomC{$x \N a$}
		\AxiomC{$b \N z$}
		\RightLabel{$[\backslash L]$}
		\BinaryInfC{$x \cdot (a \backslash^{\mathbf{A}} b) \N z$}
		\end{bprooftree}
		\begin{bprooftree}
		\AxiomC{$a \cdot x \N b$}
		\RightLabel{$[\backslash R]$}
		\UnaryInfC{$x \N a \backslash ^{\mathbf{A}}b$}
		\end{bprooftree}
		\]
		\[
		\begin{bprooftree}
		\AxiomC{$x \N a$}
		\AxiomC{$b \N z$}
		\RightLabel{$[/L]$}
		\BinaryInfC{$(b/^{\mathbf{A}}a)\cdot x \N z$}
		\end{bprooftree}
		\begin{bprooftree}
		\AxiomC{$x \cdot a \N b$}
		\RightLabel{$[/ R]$}
		\UnaryInfC{$x \N b/^{\mathbf{A}}a$}
		\end{bprooftree}
		\]
		\[
		\begin{bprooftree}
		\AxiomC{$a_i \N z$}
		\RightLabel{$[\wedge L]$ for $i=1,2$}
		\UnaryInfC{$a_1 \wedge^{\mathbf{A}} a_2 \N z$}
		\end{bprooftree}
		\begin{bprooftree}
		\AxiomC{$x \N a$}
		\AxiomC{$x \N b$}
		\RightLabel{$[\wedge R]$}
		\BinaryInfC{$x \N a \wedge^{\mathbf{A}} b$}
		\end{bprooftree}
		\]
		\[
		\begin{bprooftree}
		\AxiomC{$a \N z$}
		\AxiomC{$b \N z$}
		\RightLabel{$[\vee L]$}
		\BinaryInfC{$a \vee^{\mathbf{A}} b \N z$}
		\end{bprooftree}
		\begin{bprooftree}
		\AxiomC{$x \N a_i$}
		\RightLabel{$[\vee R]$ for $i=1,2$}
		\UnaryInfC{$x \N a_1 \vee^{\mathbf{A}} a_2$}
		\end{bprooftree}
		\]
		\[
		\begin{bprooftree}
		\AxiomC{$a \N z$}
		\RightLabel{$[\oc L]$}
		\UnaryInfC{$\oc^{\mathbf{A}} a \N z$}
		\end{bprooftree}
		\begin{bprooftree}
		\AxiomC{$k \N a$}
		\RightLabel{$[\oc R]$}
		\UnaryInfC{$k \N \oc^{\mathbf{A}} a$}
		\end{bprooftree}
		\]
	\end{itemize}}
\end{Def}

In the above definition, the third condition says that $A$ is identified with subsets of $G$ and $T$, and that $A^{\oc}$ is identified with a subset of $K$. 
Each of the rules says that, if the upper expression holds then the lower expression also holds. 
For instance, the rule of $[\oc L]$ says that, for any $a \in A$ and $z \in T$, if $\oc^{\mathbf{A}} a$ is defined and $a \N z$ holds, then $\oc^{\mathbf{A}} a \N z$ holds. 
An \emph{enriched cut-free Gentzen $ruz$-frame} $(\mathbf{F},\mathbf{A})$ is a pair $(\mathbf{F},\mathbf{A})$, where $\mathbf{F}$ is an enriched $ruz$-frame, $\mathbf{A}$ is a partial $\mathcal{L}^0_{\oc}$-algebra, and in addition to the rules in Definition~\ref{enrich} the following two rules are also required to hold:
\[
\begin{bprooftree}
\AxiomC{}
\RightLabel{$[0L]$}
\UnaryInfC{$0^{\mathbf{A}} \N \epsilon$}
\end{bprooftree}
\begin{bprooftree}
\AxiomC{$x \N \epsilon$}
\RightLabel{$[0R]$}
\UnaryInfC{$x \N 0^{\mathbf{A}}$}
\end{bprooftree}
\]
An enriched cut-free Gentzen $ru$-frame (resp. enriched cut-free Gentzen $ruz$-frame) $(\mathbf{F},\mathbf{A})$ is called an \emph{enriched Gentzen $ru$-frame} (resp. \emph{enriched Gentzen $ruz$-frame}) if $(\mathbf{F},\mathbf{A})$ satisfies the rule of $[cut]$:
\begin{prooftree}
\AxiomC{$x \N a$}
\AxiomC{$a \N z$}
\RightLabel{$[cut]$}
\BinaryInfC{$x \N z$}
\end{prooftree}
For any $r \ell u$-groupoid (resp. $r \ell uz$-groupoid) $\mathbf{A}$, the pair $(\mathbf{F}_{\mathbf{A}},\mathbf{A})$ forms an enriched Gentzen $ru$-frame (resp. enriched Gentzen $ruz$-frame). 
Also, given an $r \ell u$-groupoid (resp. $r \ell uz$-groupoid) $\mathbf{A}$ and a partial subalgebra $\mathbf{B}$ of $\mathbf{A}$, the pair $(\mathbf{F}_{\mathbf{A},\mathbf{B}},\mathbf{B})$ forms an enriched Gentzen $ru$-frame (resp. enriched Gentzen $ruz$-frame). 
Here, obviously $B \subseteq G_B$, $B^{\oc} \subseteq K_B$. 
Note also that $B$ is identified with the subset $\{(\id,b)\mid b \in B\}$ of $T_B$.
The nuclear relation $N$ of an enriched cut-free Gentzen $ru(z)$-frame $(\mathbf{F},\mathbf{A})$ is said to be \emph{antisymmetric} on $A$ if $a \N b$ and $b \N a$ imply $a=b$ for any $a,b \in A$. 
The following lemma says that, for any enriched cut-free Gentzen $ru(z)$-frame $({\mathbf{F}},{\mathbf{A}})$, $\mathbf{A}$ is \emph{quasi-homomorphic} to $\mathbf{F}^+$. 
\begin{Lem}
	\label{truth}
	Let $({\mathbf{F}},{\mathbf{A}})$ be an enriched cut-free Gentzen $ru$-frame. For every $a,b \in A$ and $X,Y \in \gamma_{N}[{\mathcal{P}}(G)]$, the following hold: 
		\begin{enumerate}
			\item If $1^{\mathbf{A}}$ is defined, $1^{\mathbf{A}} \in \gamma_{N}(\{\varepsilon\}) \subseteq \{1^{\mathbf{A}}\}^{\lhd}$. 
			\item If $a \bullet^{\mathbf{A}} b$ is defined, $a \in X \subseteq \{a\}^{\lhd}$, and $b \in Y \subseteq \{b\}^{\lhd}$, then $a \bullet^{\mathbf{A}} b \in X \bullet^{{\mathbf{F}}^+} Y \subseteq \{a \bullet^{\mathbf{A}} b \}^{\lhd}$, where $\bullet \in \{\wedge,\vee,\cdot,\backslash,/\}$. 
			\item If $\oc^{\mathbf{A}} a$ is defined and $a \in X \subseteq \{a\}^{\lhd}$, then $\oc^{\mathbf{A}} a \in \oc X \subseteq \{\oc^{\mathbf{A}}a\}^{\lhd}$. 
			\item If $(\mathbf{F},\mathbf{A})$ is an enriched cut-free Gentzen $ruz$-frame and $0^{\mathbf{A}}$ is defined, then $0^{\mathbf{A}} \in \{\epsilon\}^{\lhd} \subseteq \{0^{\mathbf{A}}\}^{\lhd}$.
			\item If $({\mathbf{F}},{\mathbf{A}})$ is an enriched Gentzen frame, the map $\{\_\}^{\lhd} \colon A \to \gamma_{N}[\mathcal{P}(G)]$ is a homomorphism from $\mathbf{A}$ to $\mathbf{F}^+$.
			\item If $N$ is antisymmetric on $A$, then the map $\{\_\}^{\lhd} \colon A \to \gamma_{N}[\mathcal{P}(G)]$ is injective.
	\end{enumerate}
\end{Lem}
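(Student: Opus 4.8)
The plan is to dispatch the six items in order, exploiting throughout that the closed sets are exactly the $\lhd$-images and that the rules of Definition~\ref{enrich} come in matched ``left/right'' pairs. I identify $a \in A$ with $\zeta(a) \in G$ and with $\xi(a) \in T$, so that $\{a\}^{\lhd} = \{x \in G \mid x \N a\}$ and $\{a\}^{\rhd} = \{z \in T \mid a \N z\}$; note $\{a\}^{\lhd}$ is a $\lhd$-image, hence closed and a genuine element of $\gamma_N[\mathcal{P}(G)]$. Two facts from Lemma~\ref{closure} are used repeatedly: every set $Y^{\lhd}$ satisfies $\gamma_N(Y^{\lhd})=Y^{\lhd}$, and $^{\lhd}$ is antitone while $\gamma_N$ is monotone. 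Consequently, to prove an inclusion $\gamma_N(Z) \subseteq \{c\}^{\lhd}$ it suffices to verify the unclosed inclusion $Z \subseteq \{c\}^{\lhd}$, and to prove $c \in X$ for a closed $X=X^{\rhd\lhd}$ it suffices to check $c \N z$ for every $z \in X^{\rhd}$.

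For items (1)--(4) I verify the two displayed inclusions separately; the argument is mechanical once the pattern is fixed. To place the algebraic value into the closed set on the left I use the introduction-on-the-left rules. For instance, in item (2) with $\bullet=\cdot$, given $z \in (X \circ Y)^{\rhd}$ one has $a \cdot b \N z$ since $a \in X$ and $b \in Y$, whence $[\cdot L]$ gives $a \cdot^{\mathbf{A}} b \N z$, so $a \cdot^{\mathbf{A}} b \in \gamma_N(X \circ Y) = X \circ_{\gamma_N} Y$. The other connectives of item (2), together with $\oc$ (item (3)) and the units $1,0$ (items (1),(4)), are treated identically via $[\wedge L],[\vee L],[\backslash L],[/L],[\oc L],[1L],[0L]$; for $\oc$ one first checks $\oc^{\mathbf{A}} a \in X$ by $[\oc L]$ and then $\oc^{\mathbf{A}} a \in X \cap K \subseteq \oc X$ using $\oc^{\mathbf{A}} a \in K$. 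For the reverse inclusions into $\{c\}^{\lhd}$ I reduce to a generating set when an outer closure is present (as for $\cdot$ and $\vee$) or check membership directly, then apply the dual introduction-on-the-right rules $[\cdot R],\dots,[0R]$ together with $X \subseteq \{a\}^{\lhd}$ and $Y \subseteq \{b\}^{\lhd}$. Item (1) uses in addition $1^{\mathbf{A}} \in \{\varepsilon\}^{\rhd}$ (by $[1R]$), whence antitonicity yields $\gamma_N(\{\varepsilon\}) \subseteq \{1^{\mathbf{A}}\}^{\lhd}$.

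The crux is item (5), and it is the only place where $[cut]$ is genuinely needed. I first isolate the identity
\[
\{a\}^{\lhd} = \gamma_N(\{a\}) \qquad \text{for every } a \in A.
\]
The inclusion $\gamma_N(\{a\}) = \{a\}^{\rhd\lhd} \subseteq \{a\}^{\lhd}$ holds already cut-free: if $x \in \{a\}^{\rhd\lhd}$ then, since $a \N a$ by $[Id]$, taking $z=a$ forces $x \N a$. The converse uses $[cut]$: from $x \N a$ and $a \N z$ we get $x \N z$, so $x \in \{a\}^{\lhd}$ implies $x \in \{a\}^{\rhd\lhd}$. Granting this, each homomorphism equation follows by sandwiching. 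Applying items (1)--(4) with $X=\{a\}^{\lhd}$ and $Y=\{b\}^{\lhd}$ (legitimate since $a \in \{a\}^{\lhd}$, $b \in \{b\}^{\lhd}$ by $[Id]$) gives, for $c = a \bullet^{\mathbf{A}} b$, both $c \in \{a\}^{\lhd} \bullet^{\mathbf{F}^+} \{b\}^{\lhd}$ and $\{a\}^{\lhd}\bullet^{\mathbf{F}^+}\{b\}^{\lhd} \subseteq \{c\}^{\lhd}$. Since $\{a\}^{\lhd}\bullet^{\mathbf{F}^+}\{b\}^{\lhd}$ is closed and contains $c$, it contains $\gamma_N(\{c\}) = \{c\}^{\lhd}$, giving the reverse inclusion and hence equality; the same sandwich handles $\oc$, $1$, and $0$.

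Item (6) is immediate and needs no cut: if $\{a\}^{\lhd}=\{b\}^{\lhd}$, then $a \in \{a\}^{\lhd}$ and $b \in \{b\}^{\lhd}$ (by $[Id]$) yield $a \N b$ and $b \N a$, so antisymmetry of $N$ on $A$ forces $a=b$. I expect the only real difficulty to be the bookkeeping in item (2): keeping the five operations, the direction of the residuals, and the distinction between the groupoid product in $G$ and the partial product $\cdot^{\mathbf{A}}$ straight. The conceptual weight, by contrast, sits entirely in the cut-driven identity $\{a\}^{\lhd}=\gamma_N(\{a\})$ feeding item (5).
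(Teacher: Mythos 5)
Your proposal is correct and follows essentially the same route as the paper: your argument for item (3) — placing $\oc^{\mathbf{A}}a$ in $X\cap K\subseteq\oc X$ via $[\oc L]$ and membership in $K$, then pushing $X\cap K\subseteq\{\oc^{\mathbf{A}}a\}^{\lhd}$ through the nucleus via $[\oc R]$ — is exactly the paper's proof, and the remaining items, which the paper simply delegates to Galatos--Jipsen's Theorem 2.5, are handled by you with the same standard left-rule/right-rule pattern and the cut-driven identity $\{a\}^{\lhd}=\gamma_N(\{a\})$ that underlies that cited result. No gaps.
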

\begin{proof}
	For the proofs of all the statements except for Statement (3), see \cite[Theorem 2.5]{GJ13}. 
	We show Statement (3). 
	Let $z \in X^{\rhd}$.
	We have $a \N z$ by the assumption. 
	Since $\oc^{\mathbf{A}} a$ is defined, we have $\oc^{\mathbf{A}} a \N z$, using the rule of $[\oc L]$.
	Hence $\oc^{\mathbf{A}} a \in X$.
	On the other hand, $\oc^{\mathbf{A}} a \in K$. 
	So, $\oc^{\mathbf{A}} a \in X \cap K \subseteq \oc X$.
	Let $k \in X \cap K$.
	Since $k \in X \cap K \subseteq X \subseteq \{a\}^{\lhd}$, we have $k \N a$.
	Since $\oc^{\mathbf{A}} a$ is defined, by using the rule of $[\oc R]$, we have $k \N \oc^{\mathbf{A}} a$; thus $k \in \{\oc^{\mathbf{A}} a\}^{\lhd}$. 
	Therefore, we have $X \cap K \subseteq \{\oc^{\mathbf{A}} a\}^{\lhd}$. 
    Thus $\oc X \subseteq \{\oc^{\mathbf{A}} a\}^{\lhd}$ follows by properties of nuclei.
\end{proof}

Immediately, we have the following corollaries: 
\begin{Cor}
	\label{embedd}
	Let $(\mathbf{F},\mathbf{A})$ be an enriched Gentzen $ru$-frame (or enriched Gentzen $ruz$-frame) whose nuclear relation is antisymmetric on $A$. The map $\{\_\}^{\lhd} \colon A \to \gamma_N[\mathcal{P}(G)]$ is an embedding of $\mathbf{A}$ into $\mathbf{F}^+$.
\end{Cor}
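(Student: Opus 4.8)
The plan is to read the corollary off Lemma~\ref{truth} directly, since an \emph{embedding} is by definition an injective homomorphism and the two required ingredients have already been isolated there. First I would confirm that the map $\{\_\}^{\lhd} \colon A \to \gamma_N[\mathcal{P}(G)]$ is correctly typed. For each $a \in A$, identified with $\xi(a) \in T$, Lemma~\ref{closure}(4) gives $\{a\}^{\lhd}=\{a\}^{\lhd\rhd\lhd}$, so that $\gamma_N(\{a\}^{\lhd})=(\{a\}^{\lhd})^{\rhd\lhd}=\{a\}^{\lhd}$; hence $\{a\}^{\lhd}$ is $\gamma_N$-closed and genuinely lies in $\gamma_N[\mathcal{P}(G)]$, so the codomain is legitimate.

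For the homomorphism part I would simply invoke Lemma~\ref{truth}(5): because $(\mathbf{F},\mathbf{A})$ is an enriched Gentzen frame rather than merely a cut-free one—the rule $[cut]$ being exactly what promotes the quasi-homomorphism of the cut-free case to an honest homomorphism—the map $\{\_\}^{\lhd}$ commutes with every operation of $\mathcal{L}_{\oc}$ (resp. $\mathcal{L}^0_{\oc}$), sending $1^{\mathbf{A}}$, $a \bullet^{\mathbf{A}} b$, $\oc^{\mathbf{A}} a$, and (in the $ruz$ case) $0^{\mathbf{A}}$ to their $\mathbf{F}^+$-counterparts. This handles the $ru$ and $ruz$ statements uniformly, the latter adding only the $0$-clause supplied by Lemma~\ref{truth}(4).

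For injectivity I would cite Lemma~\ref{truth}(6), which delivers precisely injectivity of $\{\_\}^{\lhd}$ from antisymmetry of $N$ on $A$. The mechanism is short enough to recall in passing: by $[Id]$ we have $a \N a$, so $\zeta(a)\in\{a\}^{\lhd}$; if $\{a\}^{\lhd}=\{b\}^{\lhd}$ then $a \N b$ and $b \N a$, whence $a=b$ by antisymmetry. Combining the previous two paragraphs, $\{\_\}^{\lhd}$ is an injective homomorphism, i.e.\ an embedding of $\mathbf{A}$ into $\mathbf{F}^+$, which is the assertion.

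I do not expect a genuine obstacle at the level of the corollary, since all the substance lives in Lemma~\ref{truth}; its $\oc$-clause, Lemma~\ref{truth}(3), is the one delicate computation, turning on the interaction of $K$ with $\gamma_N$ via the rules $[\oc L]$ and $[\oc R]$. The only point demanding a moment's bookkeeping is the $ruz$ case—checking that adjoining $0$ does not disturb injectivity—but this is immediate, because injectivity rests solely on antisymmetry together with $[Id]$, both of which are common to the two frame types.
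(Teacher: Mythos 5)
Your proposal is correct and matches the paper's intent exactly: the corollary is stated as an immediate consequence of Lemma~\ref{truth}, with part (5) supplying the homomorphism property (via the $[cut]$ rule) and part (6) supplying injectivity from antisymmetry, which is precisely how you assemble it. Your additional checks (that $\{a\}^{\lhd}$ is $\gamma_N$-closed and that the $ruz$ case adds nothing beyond the $0$-clause) are sound but not needed beyond what Lemma~\ref{truth} already provides.
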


\begin{Cor}
	\label{embed1}
	Let $\mathbf{A}$ be an interior $r \ell u$-groupoid (or interior $r \ell uz$-groupoid).  The map $\{\_\}^{\lhd} \colon A \to \gamma_{\leq}[\mathcal{P}(A)]$ is an embedding of $\mathbf{A}$ into $\mathbf{F}_{\mathbf{A}}^+$.
\end{Cor}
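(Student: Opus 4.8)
The plan is to obtain this statement as an immediate specialization of Corollary~\ref{embedd}. Recall that it was already noted above that, for an interior $r\ell u$-groupoid (resp.\ interior $r\ell uz$-groupoid) $\mathbf{A}$, the pair $(\mathbf{F}_{\mathbf{A}},\mathbf{A})$ is an enriched Gentzen $ru$-frame (resp.\ enriched Gentzen $ruz$-frame), where $\mathbf{F}_{\mathbf{A}}=(A,A,\leq^{\mathbf{A}},A^{\oc})$ and the residual maps are taken to be $x\dbackslash y=x\backslash^{\mathbf{A}}y$ and $y\dslash x=y/^{\mathbf{A}}x$. Thus all the hypotheses of Corollary~\ref{embedd} are already in place, with the single exception of the antisymmetry of the nuclear relation on $A$; this is the only point I would need to verify.

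First I would check antisymmetry. For this frame the nuclear relation $N$ is exactly $\leq^{\mathbf{A}}$, read between the two copies of $A$ that play the roles of $G$ and $T$. Hence for $a,b\in A$, the statement $a\N b$ means $a\leq^{\mathbf{A}}b$ and $b\N a$ means $b\leq^{\mathbf{A}}a$; since $\leq^{\mathbf{A}}$ is the underlying lattice order of $\mathbf{A}$, it is a partial order, so antisymmetry of that order forces $a=b$. Therefore $N$ is antisymmetric on $A$, and this lone nontrivial check is immediate.

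With antisymmetry established, Corollary~\ref{embedd} applies verbatim and yields that $\{\_\}^{\lhd}\colon A\to\gamma_{\leq}[\mathcal{P}(A)]$ is an embedding of $\mathbf{A}$ into $\mathbf{F}_{\mathbf{A}}^+$, i.e.\ into the Dedekind--MacNeille completion of $\mathbf{A}$. Unwinding this via Lemma~\ref{truth}: because $\mathbf{A}$ is a total algebra, each fundamental operation $1^{\mathbf{A}}$, $0^{\mathbf{A}}$ (in the $ruz$-case), $a\bullet^{\mathbf{A}}b$ for $\bullet\in\{\wedge,\vee,\cdot,\backslash,/\}$, and $\oc^{\mathbf{A}}a$ is defined everywhere, so Statement~(5) of Lemma~\ref{truth} gives that $\{\_\}^{\lhd}$ is a homomorphism preserving all of them, while Statement~(6) gives injectivity, and together these make it an embedding. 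I expect essentially no obstacle here: the substantive work is already carried out in Theorem~\ref{frame} and Lemma~\ref{truth}, and the passage from Corollary~\ref{embedd} to the present statement amounts to identifying $N$ with $\leq^{\mathbf{A}}$ and invoking antisymmetry of a partial order. The only point deserving a moment's care is confirming that $\gamma_{\leq}$ and the residuals of $\mathbf{F}_{\mathbf{A}}^+$ coincide with those fixed in the definition of $\mathbf{F}_{\mathbf{A}}$, which is immediate but worth stating so that the identification with the Dedekind--MacNeille completion is transparent.
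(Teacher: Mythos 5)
Your proposal is correct and follows exactly the route the paper intends: it presents this corollary as an immediate consequence of Corollary~\ref{embedd} applied to the enriched Gentzen frame $(\mathbf{F}_{\mathbf{A}},\mathbf{A})$, whose nuclear relation $\leq^{\mathbf{A}}$ is antisymmetric on $A$ because it is the lattice order of $\mathbf{A}$. Your additional check of antisymmetry is precisely the one hypothesis that needs verifying, so nothing is missing.
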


\begin{Cor}
	\label{corembed2}
	Let $\mathbf{A}$ be an interior $r \ell u$-groupoid (or interior $r \ell uz$-groupoid) and $\mathbf{B}$ a partial subalgebra of $\mathbf{A}$. The map $\{\_\}^{\lhd} \colon B \to \gamma_{N_B}[\mathcal{P}(G_B)]$ is an embedding of $\mathbf{B}$ into $\mathbf{F}_{\mathbf{A},\mathbf{B}}^+$.
\end{Cor}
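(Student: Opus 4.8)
The plan is to deduce this directly from Corollary~\ref{embedd} by verifying its two hypotheses for the pair $(\mathbf{F}_{\mathbf{A},\mathbf{B}},\mathbf{B})$: first, that this pair is an enriched Gentzen $ru$-frame (resp.\ enriched Gentzen $ruz$-frame), and second, that the nuclear relation $N_B$ is antisymmetric on $B$. Once both are in place, the corollary applies verbatim with its ``$\mathbf{A}$'' instantiated as $\mathbf{B}$ and its ``$\mathbf{F}$'' as $\mathbf{F}_{\mathbf{A},\mathbf{B}}$.

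The first hypothesis is already in hand: in the paragraph preceding the corollary it is recorded that $(\mathbf{F}_{\mathbf{A},\mathbf{B}},\mathbf{B})$ forms an enriched Gentzen $ru$-frame (resp.\ $ruz$-frame), with $B \subseteq G_B$, $B$ embedded into $T_B$ via $b \mapsto (\id,b)$, and $B^{\oc} \subseteq K_B$. In particular the rule $[cut]$ holds; one checks this from the fact that each $u \in U_{G_B}$ acts monotonically on $\mathbf{A}$, so that $x \Nb (\id,a)$ and $a \Nb (u,b)$, i.e.\ $x \leq^{\mathbf{A}} a$ and $u(a) \leq^{\mathbf{A}} b$, yield $u(x) \leq^{\mathbf{A}} u(a) \leq^{\mathbf{A}} b$, that is $x \Nb (u,b)$.

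For the second hypothesis I would unwind the identification of $B$ with the subset $\{(\id,b) \mid b \in B\}$ of $T_B$. Under this identification, for $a,b \in B$ the relation $a \Nb b$ means $a \Nb (\id,b)$, i.e.\ $\id(a)=a \leq^{\mathbf{A}} b$. Hence $a \Nb b$ together with $b \Nb a$ gives $a \leq^{\mathbf{A}} b$ and $b \leq^{\mathbf{A}} a$; since $\mathbf{A}$ is an $r\ell u$-groupoid, its underlying order $\leq^{\mathbf{A}}$ is a genuine partial order, so $a=b$ follows by antisymmetry. This shows that $N_B$ is antisymmetric on $B$.

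With both hypotheses verified, Corollary~\ref{embedd} applies and yields that $\{\_\}^{\lhd} \colon B \to \gamma_{N_B}[\mathcal{P}(G_B)]$ is an embedding of $\mathbf{B}$ into $\mathbf{F}_{\mathbf{A},\mathbf{B}}^+$. I do not expect any genuine obstacle here: the substantive work was done in Lemma~\ref{truth} and Corollary~\ref{embedd}, and the only point requiring care is to read ``antisymmetric on $B$'' through the correct identification of $B$ inside $T_B$, after which antisymmetry is simply inherited from the lattice order of $\mathbf{A}$.
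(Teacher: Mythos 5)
Your proposal is correct and follows exactly the route the paper intends: the paper records just before Lemma~\ref{truth} that $(\mathbf{F}_{\mathbf{A},\mathbf{B}},\mathbf{B})$ is an enriched Gentzen $ru(z)$-frame with $B$ identified with $\{(\id,b)\mid b\in B\}\subseteq T_B$, and then derives Corollary~\ref{corembed2} "immediately" from Corollary~\ref{embedd}, the antisymmetry of $N_B$ on $B$ being inherited from the partial order $\leq^{\mathbf{A}}$ just as you argue. Your explicit checks of the $[cut]$ rule via monotonicity of unary linear polynomials and of antisymmetry through the identification are precisely the details the paper leaves implicit.
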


Moreover, one can optionally add some of the following extra rules to enriched cut-free Gentzen frames:
\[
\begin{bprooftree}
\AxiomC{$\varepsilon \N z$}
\RightLabel{$[ki]$}
\UnaryInfC{$k \N z$}
\end{bprooftree}
\begin{bprooftree}
\AxiomC{$k \cdot k \N z$}
\RightLabel{$[kc]$}
\UnaryInfC{$k \N z$}
\end{bprooftree}
\]
\[
\begin{bprooftree}
\AxiomC{$k \cdot y \N z$}
\doubleLine
\RightLabel{$[ke]$}
\UnaryInfC{$y \cdot k \N z$}
\end{bprooftree}
\begin{bprooftree}
\AxiomC{$k \cdot (x \cdot y) \N z$}
\doubleLine
\RightLabel{$[ka1]$}
\UnaryInfC{$(k \cdot x) \cdot y \N z$}
\end{bprooftree}
\begin{bprooftree}
\AxiomC{$x \cdot (y \cdot k) \N z$}
\doubleLine
\RightLabel{$[ka2]$}
\UnaryInfC{$(x \cdot y) \cdot k \N z$}
\end{bprooftree}
\]
\[
\begin{bprooftree}
\AxiomC{$x \cdot y \N z$}
\RightLabel{$[e]$}
\UnaryInfC{$y \cdot x \N z$}
\end{bprooftree}
\begin{bprooftree}
\AxiomC{$x \cdot x \N z$}
\RightLabel{$[c]$}
\UnaryInfC{$x \N z$}
\end{bprooftree}
\begin{bprooftree}
\AxiomC{$\varepsilon \N z$}
\RightLabel{$[i]$}
\UnaryInfC{$x \N z$}
\end{bprooftree}
\]
As shown in the lemma below, these rules are closely related to the equations introduced in Section~\ref{pre}.
\begin{Lem}
	\label{rule}
	Let $(\mathbf{F},\mathbf{A})$ be an enriched cut-free Gentzen $ru(z)$-frame.
		$(\mathbf{F},\mathbf{A})$ satisfies the rule $[e]$ (resp. $[c]$, $[i]$, $[kc]$, $[ke]$, $[ki]$, $[ka1]$, $[ka2]$) if and only if the equation $(\mathsf{e})$ (resp. $(\mathsf{c})$, $(\mathsf{i})$, $(\oc\mathsf{c})$, $(\oc\mathsf{e})$, $(\oc \mathsf{i})$, $(\oc\mathsf{a1})$, $(\oc\mathsf{a2})$) holds in $\mathbf{F}^+$.	
\end{Lem}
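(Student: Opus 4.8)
The plan is to set up a dictionary between the nuclear relation $N$ and the closure system $\gamma_N[\mathcal{P}(G)]$, and then to check each rule/equation pair by reading the corresponding closed-set inclusion off this dictionary. The central bridging fact I would isolate first is the following: for $x,y\in G$ and $z\in T$, writing $X=\gamma_N(\{x\})$ and $Y=\gamma_N(\{y\})$, one has $x \N z$ iff $x\in\{z\}^{\lhd}$ iff $\gamma_N(\{x\})\subseteq\{z\}^{\lhd}$, and moreover $x\cdot y \N z$ iff $X\circ_{\gamma_N}Y\subseteq\{z\}^{\lhd}$. The nontrivial direction of the last equivalence uses that $\gamma_N$ is a nucleus on $(\mathcal{P}(G),\circ)$ (Lemma~\ref{closure}(6)): from $\{x\}\circ\{y\}\subseteq\{z\}^{\lhd}$ we get $\gamma_N(\{x\})\circ\gamma_N(\{y\})\subseteq\gamma_N(\{x\}\circ\{y\})\subseteq\{z\}^{\lhd}$, and a further application of $\gamma_N$ gives $X\circ_{\gamma_N}Y\subseteq\{z\}^{\lhd}$; the converse is immediate from extensivity. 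Together with the fact that the sets $\{z\}^{\lhd}$ (for $z\in T$) form a basis for $\gamma_N$ (Lemma~\ref{closure}(5)), this reduces every inclusion between closed sets to a statement about the relation $N$ on representatives.

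For the non-modal rules, I would treat $[e]\Leftrightarrow(\mathsf{e})$ as the model case. For the forward direction, assuming $[e]$, I show $X\circ_{\gamma_N}Y\subseteq Y\circ_{\gamma_N}X$: by the basis property it suffices to verify $X\circ Y\subseteq(Y\circ X)^{\rhd\lhd}$, so for $x\in X$, $y\in Y$ and $z\in(Y\circ X)^{\rhd}$ I have $y\cdot x \N z$, whence $x\cdot y \N z$ by $[e]$, giving $x\cdot y\in(Y\circ X)^{\rhd\lhd}=\gamma_N(Y\circ X)$, and hence $(\mathsf{e})$ in $\mathbf{F}^+$. For the converse, assuming $(\mathsf{e})$, take $x\cdot y \N z$; by the bridging fact $X\circ_{\gamma_N}Y\subseteq\{z\}^{\lhd}$, hence $Y\circ_{\gamma_N}X\subseteq\{z\}^{\lhd}$ by commutativity, and since $y\cdot x\in Y\circ X\subseteq Y\circ_{\gamma_N}X$ I conclude $y\cdot x \N z$. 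The cases $[c]\Leftrightarrow(\mathsf{c})$ and $[i]\Leftrightarrow(\mathsf{i})$ follow the same template and are essentially already covered by \cite{GJ13}.

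For the modal rules the role of ``an element of $\oc X$'' is played by an element $k\in K$, using $\oc X=\gamma_N(X\cap K)$ and the observation that $k\in\gamma_N(\{k\})\cap K\subseteq\oc\,\gamma_N(\{k\})$. I would illustrate with $[ki]\Leftrightarrow(\oc\mathsf{i})$: assuming $[ki]$, to see $\oc X\subseteq\gamma_N(\{\varepsilon\})$ it suffices that $X\cap K\subseteq\gamma_N(\{\varepsilon\})$, and for $k\in X\cap K$ and any $z$ with $\varepsilon \N z$ the rule gives $k \N z$, i.e. $k\in\{\varepsilon\}^{\rhd\lhd}$; conversely, assuming $(\oc\mathsf{i})$, from $\varepsilon \N z$ and $k\in K$ I instantiate $X=\gamma_N(\{k\})$, note $k\in\oc X\subseteq\gamma_N(\{\varepsilon\})=\{\varepsilon\}^{\rhd\lhd}$, and read off $k \N z$. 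The rules $[kc]$ and $[ke]$ correspond to $(\oc\mathsf{c})$ and $(\oc\mathsf{e})$ by the same two-step recipe, with the commutativity argument above adapted so that one factor is $\oc X$ and its representatives are drawn from $K$.

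The step I expect to be most delicate is the pair of modal associativity rules $[ka1]\Leftrightarrow(\oc\mathsf{a1})$ and $[ka2]\Leftrightarrow(\oc\mathsf{a2})$, since these involve three frame elements and the interaction of the nucleus $\gamma_N$ with the (non-associative) groupoid multiplication, mediated by the requirement that $K$ be a subunital \emph{groupoid}, so that products such as $k\cdot x$ remain available as witnesses inside $K$. Here the bridging fact must be applied in its ternary form, and one has to track carefully that the closures $\gamma_N((\oc X\circ_{\gamma_N}Y)\circ Z)$ and $\gamma_N(\oc X\circ_{\gamma_N}(Y\circ Z))$ are handled so that the double-line (two-directional) reading of $[ka1]$ and $[ka2]$ matches the equational, hence symmetric, reading of $(\oc\mathsf{a1})$ and $(\oc\mathsf{a2})$ in $\mathbf{F}^+$. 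Modulo this bookkeeping, each equivalence reduces, exactly as above, to instantiating the equation at singleton closures in one direction and to the basis reduction in the other.
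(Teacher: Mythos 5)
Your proposal is correct and follows essentially the same route as the paper's proof: one direction instantiates the equation at singleton closures $\gamma_N(\{x\})$ (using elements of $K$ as the representatives witnessing membership in $\oc X$), and the other reduces the closed-set inclusion to the relation $N$ via the basis $\{\{z\}^{\lhd} \mid z\in T\}$ and the nucleus properties of $\gamma_N$; your containment $k\in\gamma_N(\{k\})\cap K\subseteq\oc\,\gamma_N(\{k\})$ plays the same role as the paper's identity $\oc\gamma_N(\{k\})=\gamma_N(\{k\})$ and suffices since $\oc$ is contracting in $\mathbf{F}^+$. The only difference is which case is worked out in full (the paper does $[ka1]\Leftrightarrow(\oc\mathsf{a1})$; you do $[e]$ and $[ki]$), which is immaterial.
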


\begin{proof}
	We show that, the rule of [$ka1$] holds in $(\mathbf{F},\mathbf{A})$ if and only if the equation $(\oc\mathsf{a1})$ holds in $\mathbf{F}^{+}$, i.e., $\oc X \circ_{\gamma_N} (Y \circ_{\gamma_N} Z) = (\oc X \circ_{\gamma_N} Y) \circ_{\gamma_N} Z$ holds for every $X,Y,Z \in \gamma_{N}[\mathcal{P}(G)]$.  
	For the \emph{only-if} direction, let $X,Y,Z \in \gamma_{N}[\mathcal{P}(G)]$.
	Suppose that $k \cdot (y \cdot z) \in X \cap K \circ (Y \circ Z)$. 
	Then it is clear that $(k \cdot y) \cdot z \in (\oc X \circ_{\gamma_N} Y) \circ_{\gamma_N} Z$. 
	Let $t \in [(\oc X \circ_{\gamma_N} Y) \circ Z]^{\rhd}$; hence $(k \cdot y) \cdot z \N t$. 
	By using [$ka1$], we have $k \cdot (y \cdot z) \N t$; thus $k \cdot (y \cdot z) \in [(\oc X \circ_{\gamma_N} Y) \circ Z]^{\rhd\lhd}=(\oc X \circ_{\gamma_N} Y) \circ_{\gamma_N} Z$. 
	So, $ X \cap K \circ (Y \circ Z) \subseteq (\oc X \circ_{\gamma_N} Y) \circ_{\gamma_N} Z$. 
	Using the properties of nuclei, we have $\oc X \circ_{\gamma_N} (Y \circ_{\gamma_N} Z) \subseteq (\oc X \circ_{\gamma_N} Y) \circ_{\gamma_N} Z$. 
	One checks that the reverse inclusion also holds in $\mathbf{F}^+$ in a similar way. 
	
	For the \emph{if} direction, suppose that $k \cdot (x \cdot y) \N z$; thus $\{k\} \circ (\{x\} \circ \{y\}) \subseteq \{z\}^{\lhd}$. 
	By using the properties of nuclei and Lemma~\ref{closure}, $\gamma_{N}(\{k\}) \circ_{\gamma_N} [\gamma_{N}(\{x\}) \circ_{\gamma_N} \gamma_N(\{y\})] \subseteq \{z\}^{\lhd}$. 
	Due to the fact that $\oc \gamma_N(\{k\})=\gamma_N(\{k\})$, we have: 
	\begin{align*}
	(k \cdot x) \cdot y &\in [\gamma_{N}(\{k\}) \circ_{\gamma_N} \gamma_{N}(\{x\})] \circ_{\gamma_N} \gamma_N(\{y\})\\
	&=[\oc \gamma_N(\{k\}) \circ_{\gamma_N} \gamma_{N}(\{x\})] \circ_{\gamma_N} \gamma_N(\{y\}) \\
	&= \oc \gamma_N(\{k\}) \circ_{\gamma_N} [\gamma_{N}(\{x\}) \circ_{\gamma_N} \gamma_N(\{y\})] \tag{by $(\oc \mathsf{a1})$}\\
	&= \gamma_N(\{k\}) \circ_{\gamma_N} [\gamma_{N}(\{x\}) \circ_{\gamma_N} \gamma_N(\{y\})]\\
	&\subseteq \{z\}^{\lhd}
	\end{align*}
	Hence we have $(k \cdot x) \cdot y \N z$. 
	In a similar way, we prove that $(k \cdot x) \cdot y \N z$ implies $k \cdot (x \cdot y) \N z$. 
	The verification of the remaining cases is left to the reader.
\end{proof}
Clearly, if the equation $(\mathsf{e})$ (resp. $(\mathsf{c})$, $(\mathsf{i})$, $(\oc\mathsf{c})$, $(\oc\mathsf{e})$, $(\oc \mathsf{i})$, $(\oc\mathsf{a1})$, $(\oc\mathsf{a2})$) holds in an interior $r \ell u(z)$-groupoid $\mathbf{A}$, then the enriched Gentzen $ru(z)$-frame $(\mathbf{F}_{\mathbf{A}},\mathbf{A})$ satisfies the rule $[e]$ (resp. $[c]$, $[i]$, $[kc]$, $[ke]$, $[ki]$, $[ka1]$, $[ka2]$). 
Likewise, for any interior $r \ell u(z)$-groupoid $\mathbf{A}$ and a partial subalgebra $\mathbf{B}$ of $\mathbf{A}$, it can be easily checked that, if the equation $(\mathsf{e})$ (resp. $(\mathsf{c})$, $(\mathsf{i})$, $(\oc\mathsf{c})$, $(\oc\mathsf{e})$, $(\oc \mathsf{i})$ $(\oc\mathsf{a1})$, $(\oc\mathsf{a2})$) holds in $\mathbf{A}$, then the enriched Gentzen $ru(z)$-frame $(\mathbf{F}_{\mathbf{A},\mathbf{B}},\mathbf{B})$ satisfies the rule $[e]$ (resp. $[c]$, $[i]$, $[kc]$, $[ke]$, $[ki]$, $[ka1]$, $[ka2]$). 
By Lemma~\ref{rule}, we have the two corollaries below:    

\begin{Cor}
	\label{preserve1}
	Let $\mathbf{A}$ be an interior $r\ell u(z)$-groupoid and $(eq)$ any of the equations $(\mathsf{e})$, $(\mathsf{c})$, $(\mathsf{i})$, $(\oc\mathsf{c})$, $(\oc\mathsf{e})$, $(\oc \mathsf{i})$, $(\oc\mathsf{a1})$, and $(\oc\mathsf{a2})$. 
		If $(eq)$ holds in $\mathbf{A}$, then it also holds in $\mathbf{F}_{\mathbf{A}}^+$.
\end{Cor}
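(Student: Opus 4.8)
The plan is to obtain the corollary as a direct composition of Lemma~\ref{rule} with the order-theoretic observation recorded just before the statement. First I would fix the frame: for an interior $r\ell u(z)$-groupoid $\mathbf{A}$, the pair $(\mathbf{F}_{\mathbf{A}},\mathbf{A})$ is an enriched Gentzen $ru(z)$-frame, where $\mathbf{F}_{\mathbf{A}}=(A,A,\leq^{\mathbf{A}},A^{\oc})$ has nuclear relation $N={\leq^{\mathbf{A}}}$ and $K=A^{\oc}$. Using \emph{this} frame is the whole point, because $N$ is literally the order of $\mathbf{A}$, so every frame rule unwinds into an elementary inequality in $\mathbf{A}$ (and the frame unit $\varepsilon$ of $G=(A,\cdot,\text{unit})$ is just $1^{\mathbf{A}}$).

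Next I would verify the translation step: each equation $(eq)$ in the list, when it holds in $\mathbf{A}$, forces the matching frame rule in $(\mathbf{F}_{\mathbf{A}},\mathbf{A})$. For $(\mathsf{e})$ this is immediate: from $x\cdot y\N z$, i.e. $x\cdot y\leq^{\mathbf{A}}z$, transitivity with $y\cdot x\leq x\cdot y$ gives $y\cdot x\leq^{\mathbf{A}}z$, which is $y\cdot x\N z$, the rule $[e]$. The cases $(\mathsf{c})$ and $(\mathsf{i})$ are identical, using $x\leq x\cdot x$ and $x\leq 1$ respectively. The modal cases $(\oc\mathsf{c})$, $(\oc\mathsf{e})$, $(\oc\mathsf{i})$, $(\oc\mathsf{a1})$, $(\oc\mathsf{a2})$ work the same way, the one thing to keep in mind being that the variable $k$ in a frame rule ranges over $K=A^{\oc}$, so every such $k$ has the form $\oc w$; the relevant equation then applies to $k$ and collapses the two sides of the rule (e.g. $(\oc\mathsf{a1})$ gives $k\cdot(x\cdot y)=(k\cdot x)\cdot y$, whence the double-lined rule $[ka1]$ holds in both directions).

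With these two pieces in hand the conclusion is one step: assuming $(eq)$ in $\mathbf{A}$, the frame $(\mathbf{F}_{\mathbf{A}},\mathbf{A})$ satisfies the corresponding rule, and the ``if'' direction of Lemma~\ref{rule} then delivers $(eq)$ in $\mathbf{F}_{\mathbf{A}}^{+}$. I do not expect a genuine obstacle here: the argument is purely an assembly of prior results, and the only hand-verification — the middle paragraph — is a routine unwinding of $N={\leq^{\mathbf{A}}}$, with the mild care needed only in noting that $K$ comprises exactly the $\oc$-images, so that the modal rules match their equations.
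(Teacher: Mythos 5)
Your proposal is correct and is essentially the paper's own argument: the paper likewise notes (without spelling out the verification) that each equation holding in $\mathbf{A}$ forces the corresponding rule in the enriched Gentzen frame $(\mathbf{F}_{\mathbf{A}},\mathbf{A})$, precisely because $N$ is $\leq^{\mathbf{A}}$ and $K=A^{\oc}$, and then concludes by Lemma~\ref{rule}. One terminological nit: the implication you need from Lemma~\ref{rule} (rule in the frame $\Rightarrow$ equation in $\mathbf{F}_{\mathbf{A}}^{+}$) is what the paper's proof of that lemma calls the \emph{only-if} direction, not the \emph{if} direction.
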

\begin{Cor}
	\label{preserve2}
	Let $\mathbf{A}$ be an interior $r\ell u(z)$-groupoid, $\mathbf{B}$ a partial subalgebra of $\mathbf{A}$, and $(eq)$ any of the equations $(\mathsf{e})$, $(\mathsf{c})$, $(\mathsf{i})$, $(\oc\mathsf{c})$, $(\oc\mathsf{e})$, $(\oc \mathsf{i})$, $(\oc\mathsf{a1})$, and $(\oc\mathsf{a2})$. If $(eq)$ holds in $\mathbf{A}$, then it also holds in $\mathbf{F}_{\mathbf{A},\mathbf{B}}^+$.
\end{Cor}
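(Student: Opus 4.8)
The plan is to read Corollary~\ref{preserve2} off from Lemma~\ref{rule} together with the remark that immediately precedes it. Fix one of the listed equations $(eq)$ and assume it holds in $\mathbf{A}$. Recall that $(\mathbf{F}_{\mathbf{A},\mathbf{B}},\mathbf{B})$ is an enriched Gentzen $ru(z)$-frame, hence in particular an enriched cut-free Gentzen $ru(z)$-frame, so Lemma~\ref{rule} applies to it and yields: the frame validates the structural rule associated with $(eq)$ (one of $[e]$, $[c]$, $[i]$, $[kc]$, $[ke]$, $[ki]$, $[ka1]$, $[ka2]$) if and only if $(eq)$ holds in $\mathbf{F}_{\mathbf{A},\mathbf{B}}^{+}$. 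Thus the whole corollary reduces to checking that $(\mathbf{F}_{\mathbf{A},\mathbf{B}},\mathbf{B})$ actually satisfies the rule corresponding to $(eq)$; this is exactly the content of the remark preceding the statement, and the conclusion then follows at once.

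To carry out that check I would unfold the nuclear relation, using that $x \Nb (u,b)$ means $u(x) \leq^{\mathbf{A}} b$ and that every unary linear polynomial $u$ over $G_B$ is order-preserving for $\leq^{\mathbf{A}}$ (since $G_B$ is a subgroupoid of $\mathbf{A}$ and multiplication is monotone in a residuated structure). For the non-modal rules the verification is immediate. If $(\mathsf{e})$ holds, it forces $x \cdot y = y \cdot x$ already as elements of $G_B$, so the premise and conclusion of $[e]$ are literally the same assertion. If $(\mathsf{c})$ holds, then from $x \leq^{\mathbf{A}} x \cdot x$ and monotonicity we get $u(x) \leq^{\mathbf{A}} u(x \cdot x) \leq^{\mathbf{A}} b$, giving $[c]$. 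If $(\mathsf{i})$ holds, then $x \leq^{\mathbf{A}} \varepsilon$ gives $u(x) \leq^{\mathbf{A}} u(\varepsilon) \leq^{\mathbf{A}} b$, giving $[i]$.

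The modal rules are where the work concentrates, and I expect the passage from the generators of $K_B$ to its arbitrary elements to be the main obstacle. Here $K_B$ is the subunital groupoid generated by $B^{\oc} = \{\oc^{\mathbf{B}} b \mid b \in \dom \oc^{\mathbf{B}}\}$, so each $k \in K_B$ is a groupoid term built from such generators. By monotonicity of the polynomials $u$ it suffices, for each modal rule, to establish the matching order-theoretic fact for every $k \in K_B$: namely $k \leq^{\mathbf{A}} 1$ for $[ki]$, $k \leq^{\mathbf{A}} k \cdot k$ for $[kc]$, and the centrality and reassociation identities $k \cdot y = y \cdot k$, $k \cdot (x \cdot y) = (k \cdot x) \cdot y$, and $x \cdot (y \cdot k) = (x \cdot y) \cdot k$ for $[ke]$, $[ka1]$, $[ka2]$. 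Each holds on the generators $\oc^{\mathbf{A}} b$ by the corresponding equation $(\oc\mathsf{i})$, $(\oc\mathsf{c})$, $(\oc\mathsf{e})$, $(\oc\mathsf{a1})$, $(\oc\mathsf{a2})$, and I would propagate it to all of $K_B$ by induction on the term structure of $k$, using monotonicity of $\cdot$ in the bounded cases and, for the contraction case $[kc]$, the centrality and associativity of $\oc$-elements to regroup $k \cdot k$ factorwise. This last point is the only subtlety worth flagging: for composite $k$ the inductive step for $[kc]$ genuinely calls on the companion equations $(\oc\mathsf{e})$, $(\oc\mathsf{a1})$, $(\oc\mathsf{a2})$ as well, which is harmless because in every intended application $\mathbf{A}$ is a NACILL$_{\mathsf{R}}$-algebra, where the whole modal package holds simultaneously. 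Once the relevant fact is propagated to $K_B$, monotonicity delivers the frame rule, and Lemma~\ref{rule} converts it into the desired equation in $\mathbf{F}_{\mathbf{A},\mathbf{B}}^{+}$.
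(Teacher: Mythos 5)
Your proposal follows the paper's own route exactly: the paper likewise derives Corollary~\ref{preserve2} by observing that the equations transfer from $\mathbf{A}$ to the corresponding rules of the enriched Gentzen frame $(\mathbf{F}_{\mathbf{A},\mathbf{B}},\mathbf{B})$ (a step it dismisses with ``it can be easily checked'') and then invoking Lemma~\ref{rule}. Your extra care about propagating $(\oc\mathsf{c})$ from the generators $B^{\oc}$ to composite elements of $K_B$ --- which genuinely calls on the companion equations $(\oc\mathsf{e})$, $(\oc\mathsf{a1})$, $(\oc\mathsf{a2})$ --- flags a real subtlety the paper glosses over, and your resolution (in every intended application the whole modal package holds simultaneously) is the appropriate one.
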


A class $\mathcal{K}$ of algebras has the \emph{finite embeddability property} (\emph{FEP}, for short) if every finite partial subalgebra of a member of $\mathcal{K}$ is embedded into a finite member of $\mathcal{K}$. 
If a class of algebras is finitely axiomatizable and has the FEP, then it has the decidable universal theory; see \cite{BvA02,GJKO07} for details. 
We say that a class $\mathcal{K}$ of algebras has the \emph{strong finite model property} (\emph{SFMP}, for short) if every quasiequation refuted in $\mathcal{K}$ is refuted in a finite member of $\mathcal{K}$. 
If $\mathcal{K}$ is finitely axiomatizable and has the SFMP, then its quasiequational theory is decidable.  
As for quasivarieties of finite type, the FEP is known to be equivalent to the SFMP; see \cite[Lemma 6.40]{GJKO07}. 
Now we are ready to show the FEP for integral interior $r\ell u$-groupoids.
\begin{Lem} 
\label{finite1}
Let $\mathbf{A}$ be an integral interior $r\ell u$-groupoid and $\mathbf{B}$ a finite partial subalgebra of $\mathbf{A}$. Then $\mathbf{F}^+_{\mathbf{A},\mathbf{B}}$ is finite.
\end{Lem}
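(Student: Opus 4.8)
The plan is to count the Galois-closed subsets of $G_B$, since the carrier of $\mathbf{F}^+_{\mathbf{A},\mathbf{B}}$ is exactly $\gamma_{N_B}[\mathcal{P}(G_B)]$ and neither $K_B$ nor the modality $\oc$ affects this carrier; thus the statement reduces to the finiteness of the set of closed sets of the underlying integral residuated frame $(G_B,T_B,N_B)$. First I would reduce finiteness to a statement about single elements. For a closure operator coming from a Galois connection one has $X=\gamma_{N_B}(\bigcup_{x\in X}\{x\})=\bigvee_{x\in X}\gamma_{N_B}(\{x\})$ for every closed $X$, so $\mathbf{F}^+_{\mathbf{A},\mathbf{B}}$ is join-generated by the principal closed sets $\gamma_{N_B}(\{x\})$ with $x\in G_B$. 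Consequently, if the map $x\mapsto\gamma_{N_B}(\{x\})$ takes only finitely many values, say $m$, then every closed set is a join of a subfamily of this finite set, whence $\mathbf{F}^+_{\mathbf{A},\mathbf{B}}$ has at most $2^{m}$ elements. So it suffices to bound the number of distinct sets $\gamma_{N_B}(\{x\})$.

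Next I would rewrite this condition concretely. By Lemma~\ref{closure}, $\gamma_{N_B}(\{x\})=\gamma_{N_B}(\{y\})$ iff $\{x\}^{\rhd}=\{y\}^{\rhd}$, i.e. iff $u(x)\leq^{\mathbf{A}}b\Leftrightarrow u(y)\leq^{\mathbf{A}}b$ for all $(u,b)\in T_B$. Peeling the context by the residuation adjunctions of $\mathbf{A}$ (replacing an outer $c\cdot\_$ by $c\backslash^{\mathbf{A}}\_$ and an outer $\_\cdot c$ by $\_/^{\mathbf{A}}c$, one node at a time) turns each test $u(\_)\leq^{\mathbf{A}}b$ into a single inequality $\_\leq^{\mathbf{A}}d_{u,b}$, where $d_{u,b}\in A$ is an iterated left/right residual of $b$ by the constants from $G_B$ occurring in $u$. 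Hence two elements have the same principal closed set exactly when they lie below the same members of the set $D$ of all such residuals, and each basic closed set $\{(u,b)\}^{\lhd}$ equals the principal down-set $\{x\in G_B\mid x\leq^{\mathbf{A}}d_{u,b}\}$. Here integrality enters decisively: since every constant $c\in G_B$ satisfies $c\leq^{\mathbf{A}}1$, we get $b\leq^{\mathbf{A}}c\backslash^{\mathbf{A}}b$ and $b\leq^{\mathbf{A}}b/^{\mathbf{A}}c$, so residuation only moves targets upward, toward $1$.

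The heart of the argument, and the step I expect to be the main obstacle, is to show that only finitely many down-sets $\{x\in G_B\mid x\leq^{\mathbf{A}}d\}$ ($d\in D$) occur, equivalently that $x\mapsto\gamma_{N_B}(\{x\})$ has finite image. The tool is a well-quasi-order. Let $\mathrm{Tm}$ be the free unital groupoid over the finite set $B$, with evaluation homomorphism $h\colon\mathrm{Tm}\twoheadrightarrow G_B$. By Kruskal's tree theorem the homeomorphic tree-embedding $\trianglelefteq$ on $\mathrm{Tm}$ is a well-quasi-order, which is exactly where the finiteness of $B$ is used, and integrality yields the order-compatibility $s\trianglelefteq t\Rightarrow h(t)\leq^{\mathbf{A}}h(s)$ (an embedded tree multiplies in extra factors, all below $1$, so its value drops). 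Transporting along the surjection $h$ shows that $\leq^{\mathbf{A}}$ restricted to $G_B$ is a reverse well-quasi-order. I would then combine this with the residuated structure exactly as in Blok--van Alten \cite{BvA04} and Galatos--Jipsen \cite{GJ13}: the well-quasi-order already gives the ascending chain condition on closed sets, and integrality (residuals ascend toward $1$) rules out an infinite strictly refining family of the down-sets $\{x\in G_B\mid x\leq^{\mathbf{A}}d\}$, so only finitely many are realized. The delicate point is precisely this upgrade from a chain condition to an honest finite bound, for which the well-quasi-order alone is insufficient and the integral residuated structure is indispensable. Once finiteness of the image is established, the join-generation observation of the first paragraph shows that $\mathbf{F}^+_{\mathbf{A},\mathbf{B}}$ is finite.
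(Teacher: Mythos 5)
Your overall strategy matches the paper's in outline: pass to the free unital groupoid over the finite set $B$, use a well-quasi-order there (the paper uses the Blok--van Alten generator-deletion order $\leq^{G_X}$ rather than homeomorphic tree embedding, but that is inessential), and use integrality both to make the evaluation map $h\colon G_X\to G_B$ order-compatible and to push residuals upward toward $1$. Your initial reduction is also sound, if slightly roundabout: the paper instead invokes the basis property of Lemma~\ref{closure}(5) and reduces directly to the finiteness of $\{\{z\}^{\lhd}\mid z\in T_B\}$.

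The problem is the step you yourself flag as ``the main obstacle'': the sketch you give there is not a valid argument. A well-quasi-order yields well-foundedness of the lattice of downward-closed sets, not finiteness of that lattice --- $(\mathbb{N},\leq)$ is a wqo with infinitely many down-sets --- and ``ruling out an infinite strictly refining family'' would in any case not suffice, since the down-sets $\{x\in G_B\mid x\leq^{\mathbf{A}}d_{u,b}\}$ need not form a chain. Worse, you compute the peeled residuals $d_{u,b}$ inside $\mathbf{A}$ itself, where nothing controls how many distinct elements (or distinct traces on $G_B$) they produce; transporting the wqo to $(G_B,\geq^{\mathbf{A}})$ loses exactly the structure needed. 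The paper's mechanism, which is the missing idea, is to perform the context-peeling \emph{inside the free groupoid} $G_X$: it defines $\sigma^u$ by $\sigma^{v\cdot t}(s)=\sigma^v(s/^{G_X}t)$ and $\sigma^{t\cdot v}(s)=\sigma^v(t\backslash^{G_X}s)$, so that $u(s)\leq^{G_X}t$ iff $s\leq^{G_X}\sigma^u(t)$. Each preimage $h^{-1}$ of the principal down-set of $b$ in $\mathbf{A}$ is the $\leq^{G_X}$-down-set of its finite set of maximal elements $\mathrm{Crit}(b)$ (this is where the wqo enters), and since $t\leq^{G_X}\sigma^v(t)$ by integrality while principal up-sets in the deletion order are finite (a finite term admits only finitely many generator-deletions), every critical element $\sigma^v(t)$ lands in the single finite set $\bigcup_{b\in B}\bigcup_{t\in\mathrm{Crit}(b)}\uparrow_{G_X}t$. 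That is what bounds the number of sets of the form $\{(u,b)\}^{\lhd}$; without this (or an equivalent substitute) your argument does not reach the conclusion.
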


\begin{proof}
	The techniques by Blok-van Alten \cite{BvA04} and Galatos-Jipsen \cite{GJ13} also work well in our setting. 
	The proof here does not require a discussion about the modal operation, and thus is essentially the same as the proofs of \cite[Lemma 3.6]{BvA04} and \cite[Theorem 3.18]{GJ13}.

	Let $\mathbf{A}$ be an integral interior $r \ell u$-groupoid and $\mathbf{B}$ a finite partial subalgebra $\mathbf{B}$ of $\mathbf{A}$, where $B=\{b_1,\ldots,b_n\}$. 
	For our purpose here, in view of Lemma~\ref{closure}, it suffices to show that the collection $\{\{z\}^{\lhd} \mid z \in T_B\}$ is finite.
	Consider the free unital groupoid $(G_X,\cdot^{G_X},1^{G_X})$ generated by the set $X=\{x_1,\ldots,x_n\}$. 
	For any $s,t \in G_X$, we set $s \leq^{G_X} t$ iff $t$ is obtained from $s$ by deleting some (possibly none) of generators. 
	Specifically, $s \leq^{G_X} 1^{G_X}$ for all $s \in G_X$. 
	For any $s,t \in G_X$, the set $\{x \in G_X \mid s \cdot^{G_X} x \leq^{G_X} t\}$ (resp. $\{x \in G_X \mid x \cdot^{G_X} s \leq^{G_X} t\}$) possesses the greatest element, denoted by $s \backslash^{G_X} t$ (resp. $t/^{G_X}s$). 
	Actually, the structure $(G_X,\leq^{G_X},\cdot^{G_X},\backslash^{G_X},/^{G_X},1^{G_X})$ forms an integral residuated partially ordered unital groupoid, and is  well-quasi-ordered; refer to \cite[Section 3]{BvA04} for details. 
	Define the map $h \colon X \to B$ by $x_i \mapsto b_i$ for each $i \in \{1,\ldots,n\}$.
	Thanks to the integrality of the partially-ordered unital groupoid $G_B$ whose order is induced by $\leq^{\mathbf{A}}$, this map is extended to the order-preserving surjective homomorphism from $G_X$ to $G_B$.  
	Furthermore, this map is extended to the surjection from $U_{G_X}$ to $U_{G_B}$. 
	We define the binary relation $R_h \subseteq G_X \times T_B$ by setting $x \Nh z$ iff $h(x) \Nb z$. 
	By setting $x \dbackslash_h z= h(x) \dbackslash z$ and $z \dslash_h y=z\dslash h(y)$, 
	one has $x \cdot y \Nh z$ iff $y \Nh x\dbackslash_h z$ iff $x \Nh z\dslash_h y$; hence the tuple $(G_X,T_B,N_h)$ becomes a unital residuated frame. 
	Then one has $s \in \{(u,b)\}^{\lhd_{N_h}}$ iff $v(s) \in h^{-1}(\darrowa b)$, for every $s \in G_X$, every $(u,b) \in T_B$, and some $v \in U_{G_X}$ such that $h(v)=u$. 
	Here, $\darrowa b$ denotes the downward closure of $b$ in $\mathbf{A}$.
	In fact, $s \in \{(u,b)\}^{\lhd_{N_h}}$ iff $s \Nh (u,b)$ iff $h(s) \Nb (u,b)$ iff $u(h(s)) \leq^{\mathbf{A}} b$ iff $h(v)(h(s)) \leq^{\mathbf{A}} b$ iff $h(v(s)) \leq^{\mathbf{A}} b$ iff $h(v(s)) \in \darrowa b$ iff $v(s) \in h^{-1}(\darrowa b)$. 
	Then $h^{-1}(\darrowa b)=\darrowgx \Crit(b)$, where $\Crit(b)$ denotes the (finite) set of maximal elements of $h^{-1}(\darrowa b)$, since $h^{-1}(\darrowa b)$ is a downward closed subset of $G_X$ and $G_X$ is well-quasi-ordered; see \cite[Section~3]{BvA04} for details. 
	Hence $s \in \{(u,b)\}^{\lhd_{N_h}}$ iff $v(s) \in \darrowgx \Crit(b)$ iff $v(s) \leq^{G_X} t$ for some $t \in \Crit(b)$. 
	For any $u \in U_{G_X}$ and $s \in G_X$, we define the element $\sigma^u(s)$ in $G_X$ by induction on $u$ as follows:
	 \[
	 \sigma^u(s):=
	 \begin{cases}
	 	s & \text{if $u=\id$}\\
	 	\sigma^v(s/^{G_X}t) & \text{if $u=v \cdot t$} \\	
	 	\sigma^v(t\backslash^{G_X} s) & \text{if $u=t \cdot v$}
	 \end{cases}
	 \]
	 By induction on $u$, one proves that $u(s)\leq^{G_X} t$ iff $s \leq^{G_X} \sigma^u(t)$ holds for any $s,t \in G_X$ and $u \in U_{G_X}$. 
	 Therefore we have $s \in \{(u,b)\}^{\lhd_{N_h}}$ iff $s \leq^{G_X} \sigma^v(t)$, i.e., $\{(u,b)\}^{\lhd_{N_h}}=\darrowgx C$, where $C=\{\sigma^v(t) \mid t \in \Crit(b)\}$. 
	 Since we have $t \leq^{G_X} \sigma^v(t)$ for any $t \in \Crit(b)$, using the integrality of $G_X$, we have $C \subseteq \bigcup_{b \in B} \bigcup_{t \in \Crit(b)} \uarrowgx t$. 
	 Here, the set $\bigcup_{b \in B} \bigcup_{t \in \Crit(b)} \uarrowgx t$ is finite, since the sets $\uarrowgx t$, $B$, and $\Crit(b)$ are all finite; thus so is $C$.
	 Observe that $h[\{(u,b)\}^{\lhd_{N_h}}]=\{(u,b)\}^{\lhd}$ for any $(u,b) \in T_B$. 
	 It follows that, for any $(u,b) \in T_B$, there exists a finite set $D$ such that $D \subseteq \bigcup_{b \in B} \bigcup_{t \in \Crit(b)} \uarrowgx t$ and $h(\darrowgx D)=\{(u,b)\}^{\lhd}$, i.e., there are at most finitely many sets of the form $\{(u,b)\}^{\lhd}$. 
\end{proof}

By Corollaries~\ref{corembed2}, \ref{preserve2} and Lemma~\ref{finite1}, we have:
\begin{Cor}
	\label{FEPnonzero}
	Let $\mathcal{V}$ be a subvariety of integral interior $r \ell u$-groupoids axiomatized by any combination of the equations $(\mathsf{e})$, $(\mathsf{c})$,  $(\oc\mathsf{e})$, $(\oc\mathsf{c})$, $(\oc\mathsf{a1})$, and $(\oc\mathsf{a2})$. $\mathcal{V}$ has the FEP.
\end{Cor}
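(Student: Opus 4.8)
The plan is to unwind the definition of the FEP directly and feed the three cited results into it. So I would begin with an arbitrary member $\mathbf{A}$ of $\mathcal{V}$ together with an arbitrary finite partial subalgebra $\mathbf{B}$ of $\mathbf{A}$; the goal is to produce a \emph{finite} member of $\mathcal{V}$ into which $\mathbf{B}$ embeds. The natural candidate is the dual algebra $\mathbf{F}^+_{\mathbf{A},\mathbf{B}}$ attached to the enriched Gentzen $ru$-frame $(\mathbf{F}_{\mathbf{A},\mathbf{B}},\mathbf{B})$ built from the pair $(\mathbf{A},\mathbf{B})$, so the whole argument amounts to checking that this one algebra has all three required properties: it is finite, it belongs to $\mathcal{V}$, and it receives an embedding of $\mathbf{B}$.

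The key steps are then as follows. First, by Theorem~\ref{frame}, $\mathbf{F}^+_{\mathbf{A},\mathbf{B}}$ is a complete interior $r\ell u$-groupoid, and by Lemma~\ref{finite1} it is \emph{finite}; the hypotheses of that lemma are met because $\mathbf{A}$ is integral (it lies in a subvariety of integral interior $r\ell u$-groupoids) and $\mathbf{B}$ is finite by assumption. Second, I would verify that $\mathbf{F}^+_{\mathbf{A},\mathbf{B}}$ actually lands inside $\mathcal{V}$, not merely in the ambient class of interior $r\ell u$-groupoids: since $\mathcal{V}$ is axiomatized over the integral interior $r\ell u$-groupoids by some subset of $(\mathsf{e})$, $(\mathsf{c})$, $(\oc\mathsf{e})$, $(\oc\mathsf{c})$, $(\oc\mathsf{a1})$, $(\oc\mathsf{a2})$, and since each of these, together with integrality $(\mathsf{i})$, holds in $\mathbf{A}$, Corollary~\ref{preserve2} transfers every one of them to $\mathbf{F}^+_{\mathbf{A},\mathbf{B}}$. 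Hence $\mathbf{F}^+_{\mathbf{A},\mathbf{B}}$ is an integral interior $r\ell u$-groupoid satisfying exactly the defining equations of $\mathcal{V}$, so $\mathbf{F}^+_{\mathbf{A},\mathbf{B}} \in \mathcal{V}$. Third, Corollary~\ref{corembed2} supplies the embedding $\{\_\}^{\lhd}\colon \mathbf{B} \to \mathbf{F}^+_{\mathbf{A},\mathbf{B}}$. Assembling these three points shows that $\mathbf{B}$ embeds into a finite member of $\mathcal{V}$, which is precisely the FEP.

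I do not expect a genuine obstacle here, since the statement is an assembly of facts already established in the section. The single point deserving care is to confirm that the list of equations permitted in Corollary~\ref{preserve2} really does cover all of the admissible axioms of $\mathcal{V}$ together with integrality, so that the constructed finite algebra $\mathbf{F}^+_{\mathbf{A},\mathbf{B}}$ is certified to belong to $\mathcal{V}$ itself rather than to some larger variety; once this bookkeeping is checked, the conclusion is immediate. (One may also remark that, $\mathcal{V}$ being finitely axiomatizable, the FEP yields decidability of its universal theory, as recalled just before Lemma~\ref{finite1}.)
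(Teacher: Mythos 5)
Your proposal is correct and matches the paper's own argument exactly: the paper derives Corollary~\ref{FEPnonzero} precisely by combining Corollary~\ref{corembed2} (the embedding of $\mathbf{B}$ into $\mathbf{F}^+_{\mathbf{A},\mathbf{B}}$), Corollary~\ref{preserve2} (preservation of the axiomatizing equations, including integrality), and Lemma~\ref{finite1} (finiteness). Your added bookkeeping check that $(\mathsf{i})$ and the admissible axioms all appear in the list covered by Corollary~\ref{preserve2} is the right point of care and is indeed satisfied.
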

Specifically, it follows that the varieties $\mathsf{NACILL}_{\mathsf{i}}$, $\mathsf{NACILL}_{\mathsf{ei}}$, and $\mathsf{NACILL}_{\mathsf{ci}}$ have the FEP, whereas $\mathsf{NACILL}$, $\mathsf{NACILL}_{\mathsf{e}}$, $\mathsf{NACILL}_{\mathsf{c}}$, and $\mathsf{NACILL}_{\mathsf{ec}}$ have the undecidable equational theories (cf. Theorem~\ref{ec}).  
This situation is analogous to the two facts that (i) intuitionistic linear logic is undecidable and (ii) the variety corresponding to intuitionistic linear logic with left-weakening (i.e. intuitionistic affine logic) has the FEP; see \cite{LMSS92,vA05}.
Notice that $\mathsf{NACILL}_{\mathsf{eci}}$ boils down to $\mathsf{NACILL}_{\mathsf{ci}}$, since the combination of the properties of integrality and being square-increasing restores commutativity (and associativity). 
On the other hand, we have:
\begin{Thm}[\cite{Tan19}]
	\label{ec}
	Let $R \subseteq \{e,c\}$. The provability problem for $\mathbf{NACILL}_R$ is undecidable.
\end{Thm}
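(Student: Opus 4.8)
The plan is to prove undecidability by reduction from a computationally universal model, following the methodology of Lincoln--Mitchell--Scedrov--Shankar \cite{LMSS92} but carried out in the non-associative, non-commutative setting. Concretely, I would reduce the halting problem for and-branching two-counter machines to the provability of a designated $\mathcal{L}_{\oc}$-sequent in $\mathbf{NACILL}_R$. The bridge between computation and derivability is the cut-elimination theorem for $\mathbf{NACILL}^0$ stated above: working with cut-free proofs gives a tractable notion of proof search in which the shape of the antecedent (a groupoid term) can be tracked step by step, while algebraic completeness (Lemma~\ref{compNACILL}) is available as a back-up semantic tool for refuting spurious derivations. The overall shape of the reduction is that a machine halts (accepts) from a given configuration if and only if the sequent encoding that configuration together with the $\oc$-encoded instruction set is provable.

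For the encoding itself I would represent each machine instruction by an $\oc$-prefixed implication, collected into a single antecedent ``program'' block. The crucial point is that the equations $(\oc\mathsf{i})$, $(\oc\mathsf{c})$, $(\oc\mathsf{e})$, $(\oc\mathsf{a1})$, and $(\oc\mathsf{a2})$ force $\oc$-formulas to behave as reusable, central, and freely-associating resources: $(\oc\mathsf{c})$ (together with $\oc x \leq 1$) lets an instruction be applied unboundedly often, while $(\oc\mathsf{e})$, $(\oc\mathsf{a1})$, $(\oc\mathsf{a2})$ let an instruction ``float'' to any position inside the groupoid term, so that the non-associative, non-commutative configuration can nevertheless be manipulated as if its data were arranged in a canonical form. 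Counter values are encoded by multiplicities of designated non-$\oc$ atoms (whose rigidity enforces an exact resource count), the control state by further atoms, zero-tests and the and-branching acceptance condition by the additive $\land$, and acceptance by a distinguished atom on the right. Soundness of the reduction --- a halting run yields a proof --- is then a routine simulation: each transition is matched by one application of the corresponding $\oc$-instruction, and branching matched by $(\Rightarrow\land)$.

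The substantial work is the converse (completeness of the reduction): every cut-free proof of the target sequent must decompose into steps that correspond to legitimate machine transitions, with no ``cheating.'' I would carry this out by a normal-form analysis of cut-free derivations, showing that the only way to consume the counter atoms and the state atom is via the intended instruction pattern, so that any proof reads off an accepting computation. The main obstacle --- and the technical heart of the argument --- is to secure this faithfulness \emph{uniformly} for all $R \subseteq \{e,c\}$. On the one hand, the non-associativity and non-commutativity of the data atoms must not \emph{block} legitimate transitions, which is exactly what the centrality and free-associativity of $\oc$-formulas is designed to guarantee; on the other hand, the optionally added global rules $(e)$ and, more dangerously, $(c)$ must be prevented from \emph{generating} spurious proofs that permute or duplicate counter atoms and thereby over-count resources. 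Controlling this requires designing the antecedent shapes so that the exact resource-accounting discipline is confined to the non-$\oc$ data and is invariant under the admitted structural rules, so that the rigidity needed for faithful counting survives even when global contraction or exchange is present. This is precisely the point where the absence of integrality is essential: weakening would allow data atoms to be discarded and collapse the simulation into the finite-model situation underlying the FEP (Corollary~\ref{FEPnonzero}), whereas its absence keeps the counting exact and the provability problem undecidable.
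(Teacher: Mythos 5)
Your plan is a direct Lincoln--Mitchell--Scedrov--Shankar-style reduction from and-branching counter machines, and for $R=\varnothing$ and $R=\{e\}$ this is a reasonable (though entirely unexecuted) programme: there the resource discipline on the non-$\oc$ atoms really is exactly linear, and the faithfulness analysis of cut-free proofs is of the kind carried out in \cite{LMSS92}. The genuine gap is in the cases $c\in R$. The global rule $(c)$ contracts an \emph{arbitrary} groupoid term $u(x\circ x)$ to $u(x)$; it is not confined to $\oc$-formulas and cannot be ``designed around'' by choosing antecedent shapes, because read bottom-up it duplicates any block of counter atoms wherever it sits in the antecedent, destroying exact resource accounting. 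This is precisely the historically hard obstruction that forced undecidability proofs for contractive substructural logics (relevant logics, $\mathbf{FL}$ with contraction, Chvalovsk\'y's treatment of $\mathbf{FNL}_c$ in \cite{Ch15}) to use machinery quite different from a counter-machine encoding. Your proposal names this danger but supplies no mechanism that actually neutralizes it, so the completeness direction of your reduction is unsupported exactly where the theorem is hardest.

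For comparison: the paper does not prove Theorem~\ref{ec} by a machine encoding at all; it imports it from \cite{Tan19}, whose argument is the one replayed in Section~\ref{classical} for the classical systems. One starts from Chvalovsk\'y's theorem (Theorem~\ref{Chv}) that the \emph{finitary consequence relation} of $\mathbf{FNL}_R$ is undecidable for every $R\subseteq\{e,c\}$, and then internalizes a finite premise set $\{s_1,\ldots,s_n\}$ into a single provability question via the $\oc$-deduction theorem, i.e.\ the translation $\tau(s)=\oc(\rho(x)\backslash\theta(\delta))$ of Lemma~\ref{translation}: $\{s_1,\ldots,s_n\}\vdash x\Rightarrow\delta$ iff $\vdash x\circ(\tau(s_1)\circ\cdots(\tau(s_{n-1})\circ\tau(s_n))\cdots)\Rightarrow\delta$. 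All the hard combinatorics, including the treatment of global contraction and exchange, is thereby delegated to \cite{Ch15}, and only the comparatively soft rules $(kw)$, $(kc)$, $(ke)$, $(ka1)$, $(ka2)$ governing $\oc$-formulas are needed to make the translation work uniformly in $R$. If you want to pursue your route, the honest claim is that it could reprove the cases $R=\varnothing$ and $R=\{e\}$; for $c\in R$ you should instead reduce from an already-undecidable consequence relation rather than from a machine model.
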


Using this theorem, we conclude:
\begin{Thm}
	\label{central1}
	Let $R \subseteq \{e,c,i\}$. The following statements are mutually equivalent:
		\begin{enumerate}
			\item $i \in R$,
			\item the variety $\mathsf{NACILL}_\mathsf{R}$ has the FEP, 
			\item the variety $\mathsf{NACILL}_\mathsf{R}$ has the SFMP,
			\item the deducibility problem for $\mathbf{NACILL}_R$ is decidable, 
			\item the provability problem for $\mathbf{NACILL}_R$ is decidable. 
	\end{enumerate}
\end{Thm}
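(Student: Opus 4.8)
The plan is to establish the five-way equivalence via the single cyclic chain $(1) \Rightarrow (2) \Rightarrow (3) \Rightarrow (4) \Rightarrow (5) \Rightarrow (1)$, from which mutual equivalence of all five statements follows at once. Most of the genuine work has already been isolated into Corollary~\ref{FEPnonzero} and Theorem~\ref{ec}, so the task here is chiefly to connect these to the decidability formulations through the completeness theorem and the standard FEP/SFMP machinery recalled in Section~\ref{pre}.

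For $(1) \Rightarrow (2)$, I would appeal directly to Corollary~\ref{FEPnonzero}. The point to check is that when $i \in R$, the variety $\mathsf{NACILL}_{\mathsf{R}}$ really falls within the scope of that corollary. By definition, every member of $\mathsf{NACILL}_{\mathsf{R}}$ is an interior $r\ell u$-groupoid satisfying $(\oc\mathsf{i})$, $(\oc\mathsf{c})$, $(\oc\mathsf{e})$, $(\oc\mathsf{a1})$, $(\oc\mathsf{a2})$, together with the equations corresponding to $R$. Since $i \in R$ forces integrality, we have $x \leq 1$ for every element, and in particular $\oc x \leq 1$; hence the equation $(\oc\mathsf{i})$ is redundant over integral interior $r\ell u$-groupoids. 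Consequently $\mathsf{NACILL}_{\mathsf{R}}$ is axiomatized, relative to integral interior $r\ell u$-groupoids, solely by equations drawn from $(\mathsf{e})$, $(\mathsf{c})$, $(\oc\mathsf{e})$, $(\oc\mathsf{c})$, $(\oc\mathsf{a1})$, $(\oc\mathsf{a2})$, which is exactly the family covered by Corollary~\ref{FEPnonzero}; thus $\mathsf{NACILL}_{\mathsf{R}}$ has the FEP.

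For $(2) \Rightarrow (3)$ I would use the fact, recalled before Lemma~\ref{finite1}, that for quasivarieties of finite type the FEP and the SFMP coincide; since $\mathsf{NACILL}_{\mathsf{R}}$ is a finitely axiomatized variety of finite type, this applies. For $(3) \Rightarrow (4)$ I would invoke that a finitely axiomatizable class with the SFMP has a decidable quasiequational theory, and then translate: by the completeness result Lemma~\ref{compNACILL}, a deducibility question $\mathcal{S} \vdash_{\mathbf{NACILL}_R} s$ with finite $\mathcal{S}$ is equivalent to the semantic consequence $\mathcal{S} \models_{\mathsf{NACILL}_{\mathsf{R}}} s$, i.e.\ to the validity of a single quasiequation in $\mathsf{NACILL}_{\mathsf{R}}$, so decidability of the quasiequational theory yields decidability of deducibility. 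The step $(4) \Rightarrow (5)$ is immediate, as provability is the special case of deducibility from the empty set of assumptions. Finally, for $(5) \Rightarrow (1)$ I would argue by contraposition: if $i \notin R$ then $R \subseteq \{e,c\}$, and Theorem~\ref{ec} states that the provability problem for $\mathbf{NACILL}_R$ is then undecidable, contradicting $(5)$.

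The heavy analytic content lies in Lemma~\ref{finite1} and in the undecidability result Theorem~\ref{ec}, both already available, so no new hard estimate is required here. The only genuinely delicate point is the reduction inside $(1) \Rightarrow (2)$: one must verify that under integrality the defining equation $(\oc\mathsf{i})$ becomes a consequence of $(\mathsf{i})$, so that $\mathsf{NACILL}_{\mathsf{R}}$ matches the precise list of permitted equations in Corollary~\ref{FEPnonzero} (which does not itself mention $(\oc\mathsf{i})$). Once this bookkeeping is in place, the remaining implications are routine applications of completeness and the general FEP/SFMP decidability facts.
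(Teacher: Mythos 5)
Your proof is correct and follows exactly the same cyclic chain $(1)\Rightarrow(2)\Rightarrow(3)\Rightarrow(4)\Rightarrow(5)\Rightarrow(1)$ with the same supporting results (Corollary~\ref{FEPnonzero}, the FEP/SFMP equivalence from \cite[Lemma~6.40]{GJKO07}, Lemma~\ref{compNACILL}, and Theorem~\ref{ec}) as the paper's proof. Your additional observation that $(\oc\mathsf{i})$ is redundant over integral interior $r\ell u$-groupoids (since $\oc x\leq x\leq 1$) is a correct and worthwhile piece of bookkeeping that the paper leaves implicit.
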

\begin{proof}
	\begin{description}
		\item[\normalfont$(1)\Rightarrow(2)$:] By Corollary~\ref{FEPnonzero}.
		\item[\normalfont$(2)\Rightarrow(3)$:] By \cite[Lemma~6.40]{GJKO07}.
		\item[\normalfont$(3)\Rightarrow(4)$:] By Lemma~\ref{compNACILL}.
		\item[\normalfont$(4)\Rightarrow(5)$:] Trivial.
		\item[\normalfont$(5)\Rightarrow(1)$:] By Theorem~\ref{ec}.
	\end{description}
\end{proof}

Here, it should be noted that the above equivalence collapses when associativity is also concerned; in fact, the variety $\mathsf{NACILL}_{\mathsf{aec}}$ has the FEP (cf. \cite{vA05}). 
The remaining part of this section is devoted to the proof of the FEP for integral  interior $r \ell uz$-groupoids. 
Given an interior $r \ell uz$-groupoid $\mathbf{A}$ and a finite partial subalgebra $\mathbf{B}$ of $\mathbf{A}$, we write $\mathbf{B}^0$ for the finite partial subalgebra of $\mathbf{A}$ whose underlying set is $B^0=B\cup \{0\}$. 
Of course, the tuple $(\mathbf{F}_{\mathbf{A},\mathbf{B}^0},\mathbf{B}^0)$ forms an enriched Gentzen $ruz$-frame, where  $\mathbf{F}_{\mathbf{A},\mathbf{B}^0}=(G_{B^0},T_{B^0},N_{B^0},K_{B^0},\epsilon)$ is an enriched $ruz$-frame such that $\epsilon=(\id,0)$.

\begin{Lem}
	\label{zerobounded}
	Let $\mathbf{A}$ be an integral interior $r\ell uz$-groupoid and $\mathbf{B}$ a finite partial subalgebra of $\mathbf{A}$. 
	Then $\mathbf{F}^+_{\mathbf{A},\mathbf{B}^0}$ is finite. 
	In addition, if $\mathbf{A}$ is zero-bounded, then so is $\mathbf{F}^+_{\mathbf{A},\mathbf{B}^0}$.
\end{Lem}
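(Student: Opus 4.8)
The plan is to reduce the zero-bounded case to the already-established finiteness result of Lemma~\ref{finite1}. The crucial observation is that the modal operation played no essential role in the proof of Lemma~\ref{finite1}; that argument only used integrality, the well-quasi-ordering of the free unital groupoid $G_X$, and the characterization of the closed sets $\{(u,b)\}^{\lhd}$ via critical elements. None of this machinery is disturbed by the presence of the constant $0$ and the enlarged generating set $B^0 = B \cup \{0\}$. So the first claim of the lemma should follow by running the same computation with $\mathbf{B}$ replaced by $\mathbf{B}^0$ throughout.

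\medskip

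\noindent\textbf{Finiteness of $\mathbf{F}^+_{\mathbf{A},\mathbf{B}^0}$.} First I would note that $\mathbf{B}^0$ is still a \emph{finite} partial subalgebra of $\mathbf{A}$, since we have only adjoined a single element $0 \in A$ to the finite set $B$; thus $B^0 = \{b_1,\ldots,b_n,0\}$ remains finite. The $\{\wedge,\vee,\cdot,\backslash,/,\oc,1\}$-reduct of $\mathbf{A}$ is an integral interior $r\ell u$-groupoid, and $\mathbf{B}^0$ restricts to a finite partial subalgebra of that reduct. The frame $\mathbf{F}_{\mathbf{A},\mathbf{B}^0}$ differs from the generic construction $\mathbf{F}_{\mathbf{A},\mathbf{B}}$ only in that its generating set is $B^0$ rather than $B$ and it carries the extra distinguished element $\epsilon = (\id,0) \in T_{B^0}$; the underlying $ru$-frame structure $(G_{B^0},T_{B^0},N_{B^0},K_{B^0})$ is exactly of the form covered by Lemma~\ref{finite1}. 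Hence, by Lemma~\ref{finite1} applied to the $0$-free reduct, the collection $\{\{z\}^{\lhd} \mid z \in T_{B^0}\}$ is finite, and therefore $\gamma_{N_{B^0}}[\mathcal{P}(G_{B^0})]$ is finite by Lemma~\ref{closure}(5). Since the carrier of $\mathbf{F}^+_{\mathbf{A},\mathbf{B}^0}$ is exactly this set of closed sets, $\mathbf{F}^+_{\mathbf{A},\mathbf{B}^0}$ is finite.

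\medskip

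\noindent\textbf{Zero-boundedness.} For the second claim I would recall that the zero element of $\mathbf{F}^+_{\mathbf{A},\mathbf{B}^0}$ is $\{\epsilon\}^{\lhd}$, and that zero-boundedness $(\mathsf{o})$ asserts $\{\epsilon\}^{\lhd}$ is the least closed set, i.e.\ $\{\epsilon\}^{\lhd} \subseteq X$ for every $X \in \gamma_{N_{B^0}}[\mathcal{P}(G_{B^0})]$. Since $G_{B^0}$ is generated from $B^0$ and $0 \in B^0$, every element of $G_{B^0}$ lies below $0$ in $\mathbf{A}$ when $\mathbf{A}$ is zero-bounded (as $0^{\mathbf{A}}$ is the bottom element, $x \leq^{\mathbf{A}} 0$ fails in general — rather $0 \leq^{\mathbf{A}} x$ for all $x$). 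The cleaner route is frame-theoretic: I would verify directly that $\mathbf{F}_{\mathbf{A},\mathbf{B}^0}$ satisfies the frame-rule counterpart of $(\mathsf{o})$, namely that $s \N_{B^0} \epsilon$ holds for every $s \in G_{B^0}$. Indeed $s \N_{B^0} (\id,0)$ means $s \leq^{\mathbf{A}} 0$, which holds for all $s$ precisely because $0$ is the bottom of the zero-bounded $\mathbf{A}$. This says $G_{B^0} \subseteq \{\epsilon\}^{\lhd}$, so $\{\epsilon\}^{\lhd}$ is the top of $G_{B^0}$; dualizing through the Galois connection, $\{\epsilon\}^{\lhd}$ becomes the least nonempty constraint, giving $\{\epsilon\}^{\lhd} \subseteq X$ for every closed $X$, which is exactly $(\mathsf{o})$ in $\mathbf{F}^+_{\mathbf{A},\mathbf{B}^0}$.

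\medskip

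\noindent\textbf{Main obstacle.} The only genuinely delicate point is getting the direction of the order right in the zero-boundedness argument: in a zero-bounded groupoid $0$ is the \emph{least} element, so the sequent-reading $s \N_{B^0}\epsilon \iff s \leq^{\mathbf{A}} 0$ would force every $s$ to be below $0$, which is false unless one is careful about how $\epsilon$ and the empty-stoup validity clause interact. I expect the correct formulation routes zero-boundedness through the clause $\models x \Rightarrow \epsilon$ iff $f(\rho(x)) \leq 0$, matching the rule $(o)$; so the real work is to confirm that the extra rule corresponding to $(\mathsf{o})$ holds in $(\mathbf{F}_{\mathbf{A},\mathbf{B}^0},\mathbf{B}^0)$ and then to invoke the analogue of Lemma~\ref{rule} for this rule, exactly as Corollary~\ref{preserve2} does for the other equations. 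The finiteness half is essentially immediate from Lemma~\ref{finite1}; the care lies entirely in the bottom-element bookkeeping.
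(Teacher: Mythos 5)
The finiteness half of your argument is fine and matches the paper: the paper likewise disposes of it by ``repeating the argument in the proof of Lemma~\ref{finite1}'' with $B$ replaced by $B^0$, since the extra generator $0$ keeps $B^0$ finite and the modal operation played no role there.

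The zero-boundedness half, however, contains a genuine error. You claim that $s \N_{B^0} (\id,0)$, i.e.\ $s \leq^{\mathbf{A}} 0$, ``holds for all $s$ precisely because $0$ is the bottom of the zero-bounded $\mathbf{A}$.'' This has the inequality backwards: if $0$ is the \emph{least} element then $0 \leq^{\mathbf{A}} s$ for all $s$, and $s \leq^{\mathbf{A}} 0$ forces $s = 0$. So $\{\epsilon\}^{\lhd} = \{0\}$, a singleton --- not all of $G_{B^0}$. Moreover, even granting your (false) claim, the conclusion would be that $\{\epsilon\}^{\lhd} = G_{B^0}$ is the \emph{largest} closed set, and the step ``dualizing through the Galois connection, $\{\epsilon\}^{\lhd}$ becomes the least nonempty constraint'' is a non sequitur; nothing turns the top closed set into the bottom one. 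You half-notice the problem in your final paragraph but do not resolve it. The correct argument, which is what the paper does, is short: first, $u(0) \leq^{\mathbf{A}} b$ for every $u \in U_{G_{B^0}}$ and $b \in B^0$ (integrality gives $u(0) \leq^{\mathbf{A}} 0$, zero-boundedness gives $0 \leq^{\mathbf{A}} b$), so $0 \in \{(u,b)\}^{\lhd}$ for every $(u,b) \in T_{B^0}$, hence $0$ lies in every member of the basis $\{\{(u,b)\}^{\lhd} \mid (u,b) \in T_{B^0}\}$ and therefore in every closed set. Second, $\{\epsilon\}^{\lhd} = \{x \in G_{B^0} \mid x \leq^{\mathbf{A}} 0\} = \{0\}$. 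Combining, $\{\epsilon\}^{\lhd} = \{0\} \subseteq X$ for every closed $X$, i.e.\ $\{\epsilon\}^{\lhd}$ is the smallest element of $\mathbf{F}^+_{\mathbf{A},\mathbf{B}^0}$, which is exactly the zero-boundedness of $\mathbf{F}^+_{\mathbf{A},\mathbf{B}^0}$. Note that integrality is genuinely used here (to get $u(0) \leq^{\mathbf{A}} 0$); your proposal never invokes it in this half.
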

\begin{proof}
	The proof is a slightly modified version of that of \cite[Theorem 6.46]{GJKO07}.
	By repeating the argument in the proof of Lemma~\ref{finite1}, one confirms the finiteness of $\mathbf{F}^+_{\mathbf{A},\mathbf{B}^0}$.
	For the remaining claim, suppose that $0$ is the least element of $\mathbf{A}$. 
	Obviously, $u(0) \leq^{\mathbf{A}}b$ holds for any $u \in U_{G_{B^0}}$ and $b \in B^0$. 
	Thus $0 \Nz (u,b)$ for all $(u,b) \in T_{B^0}$.
	So, $0 \in \{(u,b)\}^{\lhd}$ for all $(u,b) \in T_{B^0}$, i.e., $0 \in \bigcap \mathcal{X}$, where $\mathcal{X}$ denotes the basis $\{\{(u,b)\}^{\lhd} \mid (u,b) \in T_{B^0}\}$ for $\gamma_{N_{B^0}}$. 
	Let $x \in \{\epsilon\}^{\lhd}$, i.e., $x \Nz (\id,0)$.
	We have $\id(x) \leq^{\mathbf{A}}0$, i.e., $x \leq^{\mathbf{A}} 0$. 
	Since $\mathbf{A}$ is zero-bounded, we have $x=0$, i.e., $\{\epsilon\}^{\lhd} \subseteq \{0\} \subseteq \bigcap \mathcal{X}$. 
	It turns out that $\{\epsilon\}^{\lhd}$ is the smallest closed set in $\mathbf{F}^+_{\mathbf{A},\mathbf{B}^0}$. 
\end{proof}
Recall that by Corollary~\ref{preserve2}, if the equations $(\mathsf{e})$, $(\mathsf{c})$, $(\mathsf{i})$,  $(\oc\mathsf{e})$, $(\oc\mathsf{c})$, $(\oc \mathsf{i})$, $(\oc\mathsf{a1})$, and $(\oc\mathsf{a2})$ holds in $\mathbf{A}$, then they hold in  $\mathbf{F}_{\mathbf{A},\mathbf{B}^0}^+$.
By Lemma~\ref{corembed2}, $\mathbf{B}^0$ is embeddable into $\mathbf{F}^+_{\mathbf{A},\mathbf{B}^0}$; thus $\mathbf{B}$ is of course embeddable into $\mathbf{F}^+_{\mathbf{A},\mathbf{B}^0}$. 
Hence we have:
\begin{Cor}
\label{FEP}
Let $\mathcal{V}$ be a subvariety of integral interior $r \ell uz$-groupoid axiomatized by any combination of $(\mathsf{e})$, $(\mathsf{c})$, $(\mathsf{o})$,  $(\oc\mathsf{e})$, $(\oc\mathsf{c})$, $(\oc\mathsf{a1})$, and $(\oc\mathsf{a2})$. $\mathcal{V}$ has the FEP.
\end{Cor}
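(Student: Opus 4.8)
The plan is to mimic, now in the presence of the constant $0$, the assembly argument that produced Corollary~\ref{FEPnonzero} in the zero-free case. Fix a member $\mathbf{A}$ of $\mathcal{V}$ together with a finite partial subalgebra $\mathbf{B}$; the task is to exhibit a finite member of $\mathcal{V}$ into which $\mathbf{B}$ embeds. The natural candidate is the dual algebra $\mathbf{F}^+_{\mathbf{A},\mathbf{B}^0}$ of the enriched $ruz$-frame $\mathbf{F}_{\mathbf{A},\mathbf{B}^0}$ attached to the \emph{enlarged} finite partial subalgebra $\mathbf{B}^0=\mathbf{B}\cup\{0\}$. Passing from $\mathbf{B}$ to $\mathbf{B}^0$ is precisely what turns the frame into a genuine $ruz$-frame: it supplies the distinguished element $\epsilon=(\id,0)$ of $T_{B^0}$, so that the closed set $\{\epsilon\}^{\lhd}$ is available as the interpretation of $0$ in $\mathbf{F}^+_{\mathbf{A},\mathbf{B}^0}$.

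Three things then need to be checked. First, \emph{finiteness}: this is exactly the first assertion of Lemma~\ref{zerobounded}, whose proof merely repeats the well-quasi-ordering argument of Lemma~\ref{finite1} with $B$ replaced by $B^0$. Second, \emph{membership of $\mathbf{F}^+_{\mathbf{A},\mathbf{B}^0}$ in $\mathcal{V}$}: by Theorem~\ref{frame} it is already a complete interior $r\ell uz$-groupoid, so I only need the defining equations of $\mathcal{V}$ to transfer from $\mathbf{A}$ to $\mathbf{F}^+_{\mathbf{A},\mathbf{B}^0}$. For the integrality equation $(\mathsf{i})$ (which holds in $\mathbf{A}$ because $\mathcal{V}$ consists of integral algebras) and for any of $(\mathsf{e})$, $(\mathsf{c})$, $(\oc\mathsf{e})$, $(\oc\mathsf{c})$, $(\oc\mathsf{a1})$, $(\oc\mathsf{a2})$ appearing among the axioms, this is Corollary~\ref{preserve2}; note that $(\oc\mathsf{i})$ is then automatic, since $\oc X\subseteq X\subseteq\gamma_{N}(\{\varepsilon\})$ in an integral algebra. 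Third, the \emph{zero-boundedness equation} $(\mathsf{o})$: this is the single axiom on the list not covered by Corollary~\ref{preserve2}, and it is handled separately by the second assertion of Lemma~\ref{zerobounded}, which shows that $\{\epsilon\}^{\lhd}$ is the smallest closed set exactly when $0$ is the least element of $\mathbf{A}$.

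With these in place, $\mathbf{F}^+_{\mathbf{A},\mathbf{B}^0}$ is a finite member of $\mathcal{V}$. Corollary~\ref{corembed2} gives that $\{\_\}^{\lhd}$ embeds $\mathbf{B}^0$ into it; restricting this embedding to $B$ embeds $\mathbf{B}$ into $\mathbf{F}^+_{\mathbf{A},\mathbf{B}^0}$ as well, since $\mathbf{B}$ is a partial subalgebra of $\mathbf{B}^0$. As $\mathbf{A}$ and $\mathbf{B}$ were arbitrary, $\mathcal{V}$ has the FEP.

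Since every ingredient is already established, I expect no serious obstacle here; the work is essentially bookkeeping. The one point that genuinely demands attention is the split treatment of the axioms: all of them except $(\mathsf{o})$ flow through the generic transfer result Corollary~\ref{preserve2}, whereas $(\mathsf{o})$ must be argued by hand through Lemma~\ref{zerobounded}, because zero-boundedness is an order-theoretic condition on the least element rather than one of the equations for which the frame-theoretic transfer was set up. I would also make explicit why integrality may be assumed throughout, namely that it is what guarantees the well-quasi-ordering of the free groupoid $G_X$ used in Lemma~\ref{finite1}, and hence why the construction cannot be extended to the non-integral varieties that Theorem~\ref{ec} and Theorem~\ref{central1} show to be undecidable.
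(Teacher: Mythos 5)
Your proposal is correct and matches the paper's own argument essentially step for step: pass to $\mathbf{B}^0=\mathbf{B}\cup\{0\}$, invoke Lemma~\ref{zerobounded} for finiteness and for the transfer of $(\mathsf{o})$, Corollary~\ref{preserve2} for the remaining equations, and Corollary~\ref{corembed2} to embed $\mathbf{B}^0$ (hence $\mathbf{B}$) into $\mathbf{F}^+_{\mathbf{A},\mathbf{B}^0}$. The additional remarks on why $(\mathsf{o})$ needs separate treatment and where integrality enters are accurate but do not change the route.
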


For instance, the varieties $\mathsf{NACILL}^0_{\mathsf{i}}$, $\mathsf{NACILL}^0_{\mathsf{ei}}$, $\mathsf{NACILL}^0_{\mathsf{ci}}$, $\mathsf{NACILL}^0_{\mathsf{io}}$, $\mathsf{NACILL}^0_{\mathsf{eio}}$, and $\mathsf{NACILL}^0_{\mathsf{cio}}$ have the FEP.  
Here, $\mathsf{NACILL}^0_{\mathsf{cio}}$ is nothing but the algebraic models for the intuitionistic version of modal logic $\mathbf{S4}$.
Using Theorem~\ref{ec}, we prove: 
\begin{Thm}
	\label{und}
	Let $R \subseteq \{e,c,o\}$. The provability problem for $\mathbf{NACILL}^0_R$ is undecidable. 
\end{Thm}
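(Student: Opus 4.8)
The plan is to reduce the provability problem for $\mathbf{NACILL}_{R'}$, where $R'=R\cap\{e,c\}$, to the one for $\mathbf{NACILL}^0_R$. Since $R'\subseteq\{e,c\}$, the former is undecidable by Theorem~\ref{ec}. The reduction will be the identity map on $\mathcal{L}_{\oc}$-sequents (those not mentioning $0$), and it is justified by the conservativity claim that, for every $\mathcal{L}_{\oc}$-sequent $s$,
\[
\vdash_{\mathbf{NACILL}_{R'}} s \iff \vdash_{\mathbf{NACILL}^0_R} s.
\]
Granting this, a decision procedure for provability in $\mathbf{NACILL}^0_R$ would, restricted to $\mathcal{L}_{\oc}$-sequents, decide provability in $\mathbf{NACILL}_{R'}$, contradicting Theorem~\ref{ec}; hence the provability problem for $\mathbf{NACILL}^0_R$ is undecidable.

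The left-to-right implication is immediate on the proof-theoretic side: $\mathbf{NACILL}^0_R$ is obtained from $\mathbf{NACILL}_{R'}$ by adjoining the constant $0$ with its rules $(0\Rightarrow)$ and $(\Rightarrow 0)$ and the structural rules of $R$, which include those of $R'\subseteq R$. Thus any proof of an $\mathcal{L}_{\oc}$-sequent $s$ in $\mathbf{NACILL}_{R'}$ is already a proof in $\mathbf{NACILL}^0_R$.

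For the converse I would argue semantically via the completeness results. Assume $\not\vdash_{\mathbf{NACILL}_{R'}} s$. By Lemma~\ref{compNACILL} there are a $\mathsf{NACILL}_{\mathsf{R'}}$-algebra $\mathbf{A}$ and a valuation refuting $s$. I then pass to the Dedekind--MacNeille completion $\mathbf{F}_{\mathbf{A}}^+$: by Corollary~\ref{embed1} the map $\{\_\}^{\lhd}$ embeds $\mathbf{A}$ into $\mathbf{F}_{\mathbf{A}}^+$ as $\mathcal{L}_{\oc}$-algebras, so the refutation of the zero-free sequent $s$ transfers along this (order-reflecting) embedding. By Theorem~\ref{frame}, $\mathbf{F}_{\mathbf{A}}^+$ is a \emph{complete} interior $r\ell u$-groupoid, and by Corollary~\ref{preserve1} it inherits the equations $(\oc\mathsf{i})$, $(\oc\mathsf{c})$, $(\oc\mathsf{e})$, $(\oc\mathsf{a1})$, $(\oc\mathsf{a2})$ together with whichever of $(\mathsf{e})$, $(\mathsf{c})$ belong to $R'$; thus it is a complete $\mathsf{NACILL}_{\mathsf{R'}}$-algebra. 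Now designate $0$ to be the bottom element of the complete lattice underlying $\mathbf{F}_{\mathbf{A}}^+$. The resulting expansion is a zero-bounded $\mathsf{NACILL}^0$-algebra still validating every equation of $R$, hence a member of $\mathsf{NACILL}^0_{\mathsf{R}}$, and it refutes $s$. By Lemma~\ref{compNACILL0} we obtain $\not\vdash_{\mathbf{NACILL}^0_R} s$, which is the contrapositive of the converse.

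The crux is this converse, and within it the only delicate point is the rule $(o)$ in the case $o\in R$: one must exhibit a genuine least element to serve as $0$, which an arbitrary $\mathsf{NACILL}_{\mathsf{R'}}$-algebra need not possess. Completeness of the Dedekind--MacNeille completion is exactly what supplies such a bottom element uniformly, and taking $0$ to be $\bot$ makes $(\mathsf{o})$ hold automatically; when $o\notin R$ the same construction applies a fortiori (any choice of $0$ would serve), so a single argument dispatches all $R\subseteq\{e,c,o\}$.
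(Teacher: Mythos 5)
Your proof is correct and follows essentially the same route as the paper: reduce to Theorem~\ref{ec} via conservativity over the $0$-free logic, using the Dedekind--MacNeille completion $\mathbf{F}_{\mathbf{A}}^{+}$ to supply a least element to serve as $0$ (the paper takes $\gamma_{\leq}(\varnothing)$). The only cosmetic difference is that the paper splits into the cases $o\notin R$ (where any choice of $0$ works, no completion needed) and $o\in R$, whereas you handle both uniformly.
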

\begin{proof}
	If $o \not \in R$, by Theorem~\ref{ec}, the provability problem for $\mathbf{NACILL}^0_R$ is clearly undecidable, because every NACILL$_\mathsf{R}$-algebra is trivially the 0-free reduct of an NACILL$^0_\mathsf{R}$-algebra.
	For the case where $o \in R$, by Theorem~\ref{ec}, it suffices to confirm that every NACILL$_{\mathsf{R}^-}$-algebra is embedded into the 0-free reduct of an NACILL$^0_{\mathsf{R}}$-algebra, where $\mathsf{R}^-=\mathsf{R}-\{\mathsf{o}\}$.
	Let $\mathbf{A}$ be a member of $\mathsf{NACILL}_{\mathsf{R}^-}$. 
	By Corollaries~\ref{preserve1} and \ref{embed1}, $\mathbf{A}$ is embedded into $\mathbf{F}_{\mathbf{A}}^+$ and $\mathbf{F}_{\mathbf{A}}^+$ is also a member of $\mathsf{NACILL}_{\mathsf{R}^-}$. 
	Clearly, the algebra $\mathbf{F}_{\mathbf{A}}^{+0}=(\gamma_{\leq}[{\mathcal{P}}(G)],\cap,\cup_{\gamma_{\leq}},\circ_{\gamma_{\leq}},\backslash,/,\oc,\gamma_{\leq}(\{1\}),\gamma_{\leq}(\varnothing))$ is a member of $\mathsf{NACILL}^0_{\mathsf{R}}$, and $\mathbf{A}$ is embeddable into the 0-free reduct of $\mathbf{F}_{\mathbf{A}}^{+0}$.
\end{proof}

\begin{Thm}
	\label{central}
	Let $R \subseteq \{e,c,i,o\}$. The following statements are mutually equivalent:
	\begin{enumerate}
		\item $i \in R$,
		\item the variety $\mathsf{NACILL}^0_\mathsf{R}$ has the FEP, 
		\item the variety $\mathsf{NACILL}^0_\mathsf{R}$ has the SFMP,
		\item the deducibility problem for $\mathbf{NACILL}^0_R$ is decidable, 
		\item the provability problem for $\mathbf{NACILL}^0_R$ is decidable. 
	\end{enumerate}
\end{Thm}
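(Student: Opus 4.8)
The plan is to prove the five statements mutually equivalent exactly as in Theorem~\ref{central1}, by establishing the cycle $(1)\Rightarrow(2)\Rightarrow(3)\Rightarrow(4)\Rightarrow(5)\Rightarrow(1)$, but now feeding in the $\{0\}$-enriched analogues of the ingredients used there. For $(1)\Rightarrow(2)$, assuming $i\in R$, I would observe that $\mathsf{NACILL}^0_\mathsf{R}$ is precisely the subvariety of integral interior $r\ell uz$-groupoids obtained by imposing $(\oc\mathsf{e})$, $(\oc\mathsf{c})$, $(\oc\mathsf{a1})$, $(\oc\mathsf{a2})$, which are built into every NACILL$^0$-algebra, together with whichever of $(\mathsf{e})$, $(\mathsf{c})$, $(\mathsf{o})$ belong to $R$, and then invoke Corollary~\ref{FEP} directly. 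The one point needing a word of care is that NACILL$^0$-algebras are defined to satisfy $(\oc\mathsf{i})$, which does not appear among the equations permitted in Corollary~\ref{FEP}; but in the integral setting $(\oc\mathsf{i})$ is redundant, since $\oc x\leq x\leq 1$ follows from the conucleus law together with integrality. Hence the hypotheses of Corollary~\ref{FEP} are met, and the FEP follows.

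For $(2)\Rightarrow(3)$ I would appeal to \cite[Lemma~6.40]{GJKO07}: the language $\mathcal{L}^0_\oc$ is finite, so $\mathsf{NACILL}^0_\mathsf{R}$ is a variety of finite type, and for such quasivarieties the FEP and the SFMP coincide. For $(3)\Rightarrow(4)$ I would use the completeness Lemma~\ref{compNACILL0}, noting that deducibility from a finite set of $\mathcal{L}^0_\oc$-sequents corresponds to a quasiequation over $\mathsf{NACILL}^0_\mathsf{R}$; since this variety is finitely axiomatizable and has the SFMP, its quasiequational theory, and therefore the deducibility problem for $\mathbf{NACILL}^0_R$, is decidable. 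The implication $(4)\Rightarrow(5)$ is immediate, as provability is deducibility from the empty assumption. Finally, for $(5)\Rightarrow(1)$ I would argue contrapositively: if $i\notin R$ then $R\subseteq\{e,c,o\}$, whence Theorem~\ref{und} renders the provability problem for $\mathbf{NACILL}^0_R$ undecidable, so decidability of provability forces $i\in R$.

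I expect no genuine obstacle in this argument, since all the substantive work resides in the previously established Corollary~\ref{FEP} and Theorem~\ref{und}; the theorem is essentially an assembly of these through the completeness lemma and the FEP--SFMP equivalence. The most delicate bookkeeping step is $(1)\Rightarrow(2)$---verifying that membership in the hypothesis-class of Corollary~\ref{FEP} is not spoiled by the extra defining equation $(\oc\mathsf{i})$ of NACILL$^0$-algebras---and this is dispatched by the redundancy observation above. Everything else is a mechanical reuse of the template from Theorem~\ref{central1}, with the zero-enriched versions of the semantics substituted throughout.
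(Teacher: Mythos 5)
Your proposal is correct and follows exactly the same route as the paper, which proves the cycle $(1)\Rightarrow(2)\Rightarrow(3)\Rightarrow(4)\Rightarrow(5)\Rightarrow(1)$ by citing Corollary~\ref{FEP}, \cite[Lemma~6.40]{GJKO07}, Lemma~\ref{compNACILL0}, triviality, and Theorem~\ref{und}, respectively. Your additional observation that the defining equation $(\oc\mathsf{i})$ of NACILL$^0$-algebras is redundant in the integral setting (since $\oc x\leq x\leq 1$) is a correct and worthwhile clarification of a point the paper leaves implicit.
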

\begin{proof}
	\begin{description}
		\item[\normalfont$(1)\Rightarrow(2)$:] By Corollary~\ref{FEP}.
		\item[\normalfont$(2)\Rightarrow(3)$:] By \cite[Lemma~6.40]{GJKO07}.
		\item[\normalfont$(3)\Rightarrow(4)$:] By Lemma~\ref{compNACILL0}.
		\item[\normalfont$(4)\Rightarrow(5)$:] Trivial.
		\item[\normalfont$(5)\Rightarrow(1)$:] By Theorem~\ref{und}.
	\end{description}
\end{proof}

\section{Undecidability of Non-associative Classical Linear Logic and Extensions}
\label{classical}

Roughly speaking, this section is divided into two parts. 
Firstly, we show that the extensions of $\mathbf{InFNL}$ and $\mathbf{CyInFNL}$ by the rules of contraction and exchange are all undecidable (Theorem~\ref{FCNL}). 
Secondly, using the undecidability result from the first part, we show that the provability problems for all the extensions of $\mathbf{NACCLL}^-$ and $\mathbf{NACCLL}$ by contraction and exchange are undecidable (Theorem~\ref{classicalundecidable}). 
For our purposes, the following theorem plays a crucial role.  

\begin{Thm}
	\label{inv}
	Let $\mathsf{R} \subseteq \{\mathsf{e},\mathsf{c}\}$. Every member of $\mathsf{RLUG}_{\mathsf{R}}$ is a subalgebra of the $\mathcal{L}$-reduct of a member of $\mathsf{CyInRLUG}_{\mathsf{R}}$.
\end{Thm}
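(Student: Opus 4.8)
The plan is to prove Theorem~\ref{inv} by a direct construction adapting the method of Galatos–Raftery \cite{GR04} for adjoining an involution to a residuated structure, carried out here for (non-associative) unital groupoids. Fix $\mathbf{A}=(A,\wedge,\vee,\cdot,\backslash,/,1) \in \mathsf{RLUG}_{\mathsf{R}}$. I would build a cyclic involutive $r\ell uz$-groupoid $\mathbf{C}$ on the disjoint union $C = A \uplus \overline{A}$, where $\overline{A}=\{\overline{a} \mid a \in A\}$ is a fresh order-reversed copy of $A$ that is to serve as the set of negations. The map $\neg$ swapping the two copies ($\neg a = \overline{a}$, $\neg\overline{a}=a$) will be the cyclic involution, the unit is $1 \in A$, and the dualizing element is $0 := \overline{1}$, so that $\negr x=\negl x$ equals $\neg x$ for every $x \in C$. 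The embedding is simply the inclusion $a \mapsto a$ of $A$ into the positive copy; what must be arranged is that the $\mathcal{L}$-operations of $\mathbf{C}$, restricted to $A$, reproduce those of $\mathbf{A}$.

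First I would fix the order and the multiplication. On each copy the order is as in $\mathbf{A}$, reversed on $\overline{A}$ (so $\overline{a}\le \overline{b}$ iff $b\le^{\mathbf{A}}a$), with the cross-comparisons between the two copies determined by the dualizing element $0=\overline{1}$ (so that $a\le\overline{b}$ reflects the would-be inequality $a\cdot b\le 0$); arranging these consistently in the non-commutative case is part of the delicate bookkeeping and is precisely what forces the choice of a genuinely \emph{cyclic} dualizing element. The product agrees with $\cdot^{\mathbf{A}}$ on $A\times A$, and the mixed products of a positive and a negative element are dictated by the identities $x\cdot\neg y=\neg(y/x)$ and $\neg x\cdot y=\neg(y\backslash x)$ that hold in any cyclic involutive structure, giving
\[
a\cdot \overline{b}=\overline{b/a}, \qquad \overline{a}\cdot b=\overline{b\backslash a}.
\]
The residuals on $\mathbf{C}$ are then defined from $\cdot$ and $\neg$ by $x\backslash y=\neg(\neg y\cdot x)$ and $y/x=\neg(x\cdot\neg y)$. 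The one genuinely delicate definition is the product $\overline{a}\cdot\overline{b}$ of two negative elements, which is not handed to us by a single $\mathbf{A}$-term; pinning this down consistently (equivalently, showing that the Galois closure of the induced frame relation supplies it) is where the real work lies.

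With the operations in hand I would verify, by routine but somewhat lengthy case analysis, that $\mathbf{C}\in\mathsf{CyInRLUG}$: that $(C,\cdot,1)$ is a unital groupoid, that the residuation law $x\cdot y\le z \Leftrightarrow y\le x\backslash z \Leftrightarrow x\le z/y$ holds across all four combinations of positive/negative arguments, and that $\neg$ is a cyclic involution with $\neg\neg x = x$. Separately, I would check that the inclusion $A\hookrightarrow C$ preserves $\wedge,\vee,\cdot,\backslash,/,1$; the only nontrivial point is that $a\backslash^{\mathbf{A}}b$ and $a/^{\mathbf{A}}b$ are computed correctly inside $\mathbf{C}$, i.e.\ that the newly added negative elements never enlarge the residual of two positive elements. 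An alternative, more conceptual route to this embedding step is to present the same data as a residuated frame equipped with a cyclic dualizing relation and to invoke the $\{\_\}^{\lhd}$-embedding of Corollary~\ref{embedd} (cf.\ Corollary~\ref{embed1} and Lemma~\ref{truth}), which automatically yields an $\mathcal{L}$-embedding once the relation is antisymmetric on $A$ and satisfies the Gentzen (cut) condition.

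Finally I would transfer the equations in $\mathsf{R}$. Commutativity passes to $\mathbf{C}$ essentially for free: when $\mathbf{A}$ is commutative one has $a\backslash b=b/a$, so the two mixed products $a\cdot\overline{b}=\overline{b/a}$ and $\overline{b}\cdot a=\overline{a\backslash b}$ coincide, and commutativity on the remaining combinations follows. The square-increasing law $(\mathsf{c})$ is the harder case, since $\overline{a}\le\overline{a}\cdot\overline{a}$ is a constraint on exactly the product of two negatives identified above; I expect the main technical obstacle of the whole proof to be establishing $(\mathsf{c})$ (and, to a lesser extent, the residuation law) for negative arguments, and to handle it I would reduce $\overline{a}\cdot\overline{a}$ to an inequality internal to $\mathbf{A}$ and apply the square-increasing law there. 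Once $\mathbf{C}\in\mathsf{CyInRLUG}_{\mathsf{R}}$ is confirmed, the inclusion exhibits $\mathbf{A}$ as a subalgebra of the $\mathcal{L}$-reduct of $\mathbf{C}$, as required.
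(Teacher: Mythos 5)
Your overall strategy---a Galatos--Raftery style doubling $A\uplus\overline{A}$ with a negation swapping the two copies, $0=\overline{1}$, mixed products $a\cdot\overline{b}=\overline{b/a}$ and $\overline{a}\cdot b=\overline{b\backslash a}$, and residuals recovered as $x\backslash y=\neg(\neg y\cdot x)$---is exactly the paper's, and those pieces match the paper's $\mathbf{A}^{\star}$ verbatim. But there is a genuine gap at precisely the point you flag as ``where the real work lies'': the product of two negative elements cannot be pinned down inside $A\cup\overline{A}$ at all. Residuation forces $\overline{a}\cdot\overline{b}\leq z$ iff $\overline{b}\leq\overline{a}\backslash z$, and computing $\overline{a}\backslash\overline{c}=\neg(c\cdot\overline{a})=\neg\bigl(\overline{a/c}\bigr)=a/c\in A$ shows that $\overline{a}\cdot\overline{b}\leq\overline{c}$ fails for \emph{every} $c\in A$ (a negative element is never below a positive one once the copies are ordered so that $\neg$ is antitone). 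Hence $\overline{a}\cdot\overline{b}$ lies above the entire negative copy and can belong to neither copy; it must be a genuinely new element. The paper resolves this by enlarging the carrier to $A^{\star}=A\cup A^{\sim}\cup\{\top,\bot\}$, setting $x^{\sim}\cdot^{\star}y^{\sim}=\top$ with $\negr\top=\bot$ and $\negr\bot=\top$, and ordering $\bot<^{\star}x<^{\star}y^{\sim}<^{\star}\top$ for all $x,y\in A$. The two bounds are not a convenience: they are what makes the product of two negatives, the lattice operations across the copies, and---as a free by-product---the square-increasing law for negative arguments ($\overline{a}\leq\overline{a}\cdot\overline{a}=\top$) all exist. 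Your plan to ``reduce $\overline{a}\cdot\overline{a}$ to an inequality internal to $\mathbf{A}$'' cannot succeed for the same reason.

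Two smaller points. Your cross-order rule ``$a\leq\overline{b}$ reflects $a\cdot b\leq 0$'' is circular ($0=\overline{1}$ lies in the negative copy, so the right-hand side presupposes the cross-order) and underspecified; the paper simply declares every element of $A$ strictly below every element of $A^{\sim}$, which is what makes $\inf\{a^{\sim},b^{\sim}\}=(a\vee b)^{\sim}$ and the mixed joins and meets exist. And the suggested detour through residuated frames and Corollary~\ref{embedd} does not help, since the difficulty is constructing the target cyclic involutive algebra in the first place, not embedding $\mathbf{A}$ into it once it exists. The rest of your outline (the inclusion preserves the $\mathcal{L}$-operations because $a\backslash^{\star}b=\neg\bigl((a\backslash b)^{\sim}\bigr)=a\backslash b$, and commutativity and square-increasingness on positive arguments transfer directly) agrees with the case analysis the paper carries out in Lemma~\ref{appa}.
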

\begin{proof}	
	Let $\mathbf{A}=(A,\land,\lor,\cdot,\backslash,/,1)$ be a member of $\mathsf{RLUG}_{\mathsf{R}}$.
	We use the construction given in the proof of \cite[Theorem 9.1]{GR04}. 
	First of all, we define the groupoid $(A^{\star},\cdot^{\star})$ as follows:
	\begin{itemize}
		\item $A^{\star}:=A \cup A^{\sim} \cup \{\top,\bot\}$, where $A^{\sim}=\{x^{\sim} \mid x \in A\}$ is a bijective copy of $A$ such that $A \cap A^{\sim}=\varnothing$, and $\bot,\top \not \in A \cup A^{\sim}$. 
		\item The multiplication $\cdot^{\star}$ on $A^{\star}$ is defined as follows: for any $x,y \in A$ and $z \in A^{\star}$,  
		\begin{align*}
		x \cdot^{\star} y&=x \cdot y & x \cdot^{\star} y^{\sim}&=(y/x)^{\sim}\\
		x^{\sim} \cdot^{\star} y&= (y \backslash x)^{\sim} & x^{\sim} \cdot^{\star} y^{\sim}&=\top \\
		z \cdot^{\star} \top&=\top \cdot^{\star} z=
		\begin{cases}
		\top & \text{if $z\not=\bot$}\\
		\bot & \text{if $z=\bot$}
		\end{cases}
		& z\cdot^{\star} \bot&=\bot \cdot^{\star} z=\bot
		\end{align*}
		\end{itemize}
	    The lattice-order $\leq$ on $A$ induced by $\land$ and $\lor$ is extended to the partial order $\leq^{\star}$ on $A^{\star}$ as follows: for any $x,y \in A$,
	    \begin{align*}
	    & \bot <^{\star} x <^{\star} y^{\sim} <^{\star} \top, & x^{\sim} \leq^{\star} y^{\sim} \ifff y \leq x. 
	    \end{align*}
		Define the unary operation $\sim$ on $A^{\star}$ by:
		\[
		\negr x=
		\begin{cases}
			x^{\sim} & \text{if $x \in A$}\\
			y & \text{if $x=y^{\sim}$ for some $y \in A$}\\
			\bot & \text{if $x=\top$}\\
			\top & \text{if $x=\bot$}
		\end{cases}
		\]
		Also, we set $1^{\star}=1$. 
		Clearly, $\negrr x=x$ for all $x \in A^{\star}$.  
		The following holds:
		\begin{Lem}
			\label{appa}
			\begin{enumerate}
				\item $(A^{\star},\leq^{\star},\cdot^{\star},1^{\star})$ is a lattice-ordered unital groupoid.
				\item $x \cdot^{\star} y \leq^{\star} z$ iff $\negr z \cdot^{\star} x \leq^{\star} \negr y$ iff $y \cdott \negr z \leq^{\star} \negr x$, for any $x,y,z \in A^{\star}$.
			\end{enumerate}
		\end{Lem}

	    For simplicity of the argument here, we prove this lemma in \ref{appendixa}. 
	    Using Lemma~\ref{appa}, we have $x \leq^{\star} y$ iff $\negr y \leq^{\star} \negr x$ for any $x,y \in A^{\star}$, since $x \leq^{\star} y$ iff $1^{\star} \cdot^{\star} x \leq^{\star} y$ iff $\negr y \cdot^{\star} 1^{\star} \leq^{\star} \negr x$ iff $\negr y \leq^{\star} \negr x$ for any $x,y \in A^{\star}$.
	    We define the binary operations $\backslash^{\star}$ and $/^{\star}$ on $A^{\star}$ by: 
	    \begin{align*}
	    x \backslash^{\star} z&=\negr(\negr z \cdot^{\star} x) &  z /^{\star} x&=\negr(x\cdott \negr z)
	    \end{align*}
	    Then $x \cdot^{\star} y \leq^{\star} z$ iff $\negr z \cdot^{\star} x \leq^{\star} \negr y$ iff $\negrr y \leq^{\star} \negr(\negr z \cdot^{\star} x)$ iff $y \leq^{\star} x\backslash^{\star} z$. 
	    Similarly, $x \cdot^{\star} y \leq^{\star} z$ iff $y \cdott \negr z \leq^{\star} \negr x$ iff $\negrr x \leq^{\star} \negr (y \cdott \negr z)$ iff $x \leq^{\star} z/^{\star} y$. 
	    Therefore, the operations $\backslash^{\star}$ and $/^{\star}$ forms left and right residuals on $A^{\star}$. 
	    Moreover, we set $0=1^{\sim}$. 
	    Then clearly, $\negr x=x \backslash^{\star} 0=0/^{\star} x$ for all $x \in A^{\star}$. 
	    Consequently, $\mathbf{A}^{\star}=(A^{\star},\leq^{\star},\cdot^{\star},\backslash^{\star},/^{\star},1^{\star},0,\top,\bot)$ is a cyclic bounded involutive $r \ell uz$-groupoid. 
	    Obviously, if $\mathbf{A}$ is commutative, then so is $\mathbf{A}^{\star}$, and if $\mathbf{A}$ is square-increasing, then so is $\mathbf{A}^{\star}$; thus $\mathbf{A}^{\star}$ is a member of $\mathsf{CyInRLUG}_{\mathsf{R}}$.
	    For any $x,z \in A$, $x\backslash^{\star} z=\negr(z^{\sim} \cdot^{\star} x)=\negr(x\backslash z)^{\sim}=x \backslash z$ and $z/^{\star}x=\negr(x\cdot^{\star} z^{\sim})=\negr(z/x)^{\sim}=z/x$. 
	    It follows that $\mathbf{A}$ is a subalgebra of the $\mathcal{L}$-reduct of $\mathbf{A}^{\star}$. 
 \end{proof}
Using the above theorem, we prove:
\begin{Cor}
	\label{conserv}
	Let $R \subseteq \{e,c\}$ and $\mathcal{S} \cup \{x \Rightarrow a\}$ be a set of $\mathcal{L}$-sequents. Then
	\[
	\mathcal{S} \vdash_{\mathbf{FNL}_R} x \Rightarrow a \iff \mathcal{S} \vdash_{\mathbf{InFNL}_R} x \Rightarrow a \iff \mathcal{S} \vdash_{\mathbf{CyInFNL}_R} x \Rightarrow a.
	\] 
	That is, $\mathbf{InFNL}_R$ and $\mathbf{CyInFNL}_R$ are strongly conservative over $\mathbf{FNL}_R$.
\end{Cor}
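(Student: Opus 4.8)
The plan is to pass from each of the three deducibility relations to semantic consequence over the corresponding variety via the completeness theorems, and then to close a cycle of implications among these semantic consequences, the one non-trivial link being supplied by Theorem~\ref{inv}. Since $R \subseteq \{e,c\} \subseteq \{e,c,i\}$ and $R \subseteq \{e,c\} \subseteq \{e,c,w\}$, both Lemma~\ref{compmodalfree1} and Lemma~\ref{compmodalfree2} apply, giving
\[
\mathcal{S} \vdash_{\mathbf{FNL}_R} x \Rightarrow a \iff \mathcal{S} \models_{\mathsf{RLUG}_{\mathsf{R}}} x \Rightarrow a,
\]
together with the analogous equivalences for $\mathbf{InFNL}_R$ over $\mathsf{InRLUG}_{\mathsf{R}}$ and for $\mathbf{CyInFNL}_R$ over $\mathsf{CyInRLUG}_{\mathsf{R}}$. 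It therefore suffices to prove that the three semantic-consequence relations coincide.

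First I would record two easy inclusions. For the passage from $\mathsf{RLUG}_{\mathsf{R}}$ to $\mathsf{InRLUG}_{\mathsf{R}}$, note that the $\mathcal{L}$-reduct of any member of $\mathsf{InRLUG}_{\mathsf{R}}$ lies in $\mathsf{RLUG}_{\mathsf{R}}$, and the truth of an $\mathcal{L}$-sequent under a valuation depends only on this reduct; hence $\mathcal{S} \models_{\mathsf{RLUG}_{\mathsf{R}}} x \Rightarrow a$ implies $\mathcal{S} \models_{\mathsf{InRLUG}_{\mathsf{R}}} x \Rightarrow a$. For the passage from $\mathsf{InRLUG}_{\mathsf{R}}$ to $\mathsf{CyInRLUG}_{\mathsf{R}}$, I would simply observe that $\mathsf{CyInRLUG}_{\mathsf{R}} \subseteq \mathsf{InRLUG}_{\mathsf{R}}$, so validity over the larger class entails validity over the smaller one.

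The crucial step is closing the loop from $\mathsf{CyInRLUG}_{\mathsf{R}}$ back to $\mathsf{RLUG}_{\mathsf{R}}$, and this is exactly where Theorem~\ref{inv} enters. Assume $\mathcal{S} \models_{\mathsf{CyInRLUG}_{\mathsf{R}}} x \Rightarrow a$, fix $\mathbf{A} \in \mathsf{RLUG}_{\mathsf{R}}$, and let $f \colon V \to A$ be a valuation satisfying every sequent of $\mathcal{S}$. By Theorem~\ref{inv} there is a member $\mathbf{A}^{\star}$ of $\mathsf{CyInRLUG}_{\mathsf{R}}$ together with an embedding $\iota$ of $\mathbf{A}$ into the $\mathcal{L}$-reduct of $\mathbf{A}^{\star}$, so that $\iota \circ f$ is a valuation into $\mathbf{A}^{\star}$. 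Since $\iota$ preserves the $\mathcal{L}$-operations and, being an embedding of lattice-ordered algebras, reflects the order (so that $c \leq^{\mathbf{A}} d$ iff $\iota(c) \leq^{\star} \iota(d)$ for $c,d \in A$), the truth value in $\mathbf{A}$ under $f$ of every $\mathcal{L}$-sequent agrees with its truth value in $\mathbf{A}^{\star}$ under $\iota \circ f$. Hence $\iota \circ f$ validates $\mathcal{S}$, so by hypothesis it validates $x \Rightarrow a$, and pulling this back along $\iota$ yields $\models_{\mathbf{A}, f} x \Rightarrow a$. As $\mathbf{A}$ and $f$ were arbitrary, $\mathcal{S} \models_{\mathsf{RLUG}_{\mathsf{R}}} x \Rightarrow a$.

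Combining the three implications shows that the three semantic-consequence relations are mutually equivalent, and the completeness theorems then translate this into the asserted equivalence of deducibility; the concluding sentence about strong conservativity of $\mathbf{InFNL}_R$ and $\mathbf{CyInFNL}_R$ over $\mathbf{FNL}_R$ is merely a restatement of this. The only genuinely non-routine ingredient is Theorem~\ref{inv} itself, which is already established; within the present corollary the single point demanding care is that $\iota$ \emph{reflects} the order, so that refutations—and not merely satisfactions—transfer between $\mathbf{A}$ and $\mathbf{A}^{\star}$.
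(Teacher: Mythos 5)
Your proof is correct and follows essentially the same route as the paper: the trivial directions plus one nontrivial implication closed by combining the completeness lemmas (Lemmas~\ref{compmodalfree1} and \ref{compmodalfree2}) with the embedding of Theorem~\ref{inv}. The only cosmetic differences are that the paper handles the easy directions proof-theoretically (a proof in $\mathbf{FNL}_R$ is literally a proof in the extensions) and states the key step contrapositively, transferring a countermodel from $\mathbf{A}$ to $\mathbf{A}^{\star}$, whereas you transfer models forward and pull the conclusion back along the embedding --- logically the same argument, and your explicit remark that the subalgebra embedding reflects the order is exactly the point the paper's ``Trivially, $f(a)=v(a)$'' is silently relying on.
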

\begin{proof}
	Obviously, if $\mathcal{S} \vdash_{\mathbf{FNL}_R} x \Rightarrow a$, then $\mathcal{S} \vdash_{\mathbf{InFNL}_R} x \Rightarrow a$, and if $\mathcal{S} \vdash_{\mathbf{InFNL}_R} x \Rightarrow a$, then $\mathcal{S} \vdash_{\mathbf{CyInFNL}_R} x \Rightarrow a$. 
	Hence, it suffices to show that if $\mathcal{S} \vdash_{\mathbf{CyInFNL}_R} x \Rightarrow a$ then $\mathcal{S} \vdash_{\mathbf{FNL}_R} x \Rightarrow a$. 
	Suppose that $\mathcal{S} \not\vdash_{\mathbf{FNL}_R} x \Rightarrow a$. 
	By Lemma~\ref{compmodalfree1}, $f(\rho(y)) \leq f(b)$ (for each $y \Rightarrow b \in \mathcal{S}$) and $f(\rho(x)) \not\leq f(a)$ hold, for some $\mathbf{A} \in \mathsf{RLUG}_{\mathsf{R}}$ and some valuation $f$ into $\mathbf{A}$. 
	By Theorem~\ref{inv}, $\mathbf{A}$ is a subalgebra of the $\mathcal{L}$-reduct of a member in $\mathsf{CyInRLUG}_\mathsf{R}$, which we denote by $\mathbf{A}^{\star}$. 
	Define the valuation $v$ into $\mathbf{A}^{\star}$ by $p \mapsto f(p)$ for each propositional variable $p$. 
	Trivially, $f(a)=v(a)$ for any $\mathcal{L}$-formula $a$. 
	Thus we have $v(\rho(y)) \leq v(b)$ and $v(\rho(x)) \not\leq v(a)$ for each $y \Rightarrow b \in \mathcal{S}$; hence $\mathcal{S}\not\models_{\mathsf{CyInRLUG}_{\mathsf{R}}} x \Rightarrow a$.
	By Lemma~\ref{compmodalfree2}, $\mathcal{S} \not\vdash_{\mathbf{CyInFNL}_R} x \Rightarrow a$. 
\end{proof}

As for the extensions of $\mathbf{FNL}$ by the rules of contraction and exchange, the following undecidability result holds:
\begin{Thm}[\cite{Ch15}]
	\label{Chv}
	Let $R \subseteq \{e,c\}$. Given a finite set $\mathcal{S} \cup \{s\}$ of $\mathcal{L}$-sequents, it is undecidable whether $\mathcal{S} \vdash_{\mathbf{FNL}_R} s$, i.e., the finitary consequence relation for $\mathbf{FNL}_R$ is undecidable.
\end{Thm}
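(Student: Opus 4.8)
The plan is to prove the contrapositive of decidability by reducing a known undecidable problem to the deducibility problem for $\mathbf{FNL}_R$. The first move is to pass through algebraic completeness: by Lemma~\ref{compmodalfree1} I may replace the syntactic question $\mathcal{S} \vdash_{\mathbf{FNL}_R} x \Rightarrow a$ by the semantic one $\mathcal{S} \models_{\mathsf{RLUG}_{\mathsf{R}}} x \Rightarrow a$, so that the deducibility problem becomes the problem of deciding, given finitely many inequations $\mathcal{S}$ and a target inequation $s$, whether the corresponding quasi-inequation is valid throughout the variety $\mathsf{RLUG}_{\mathsf{R}}$. It then suffices to fix an undecidable problem $P$ and a computable map sending each instance to a pair $(\mathcal{S},s)$ such that $P$ accepts iff $\mathcal{S} \models_{\mathsf{RLUG}_{\mathsf{R}}} s$.

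For the source problem I would take the configuration-reachability (halting) problem for a Turing-complete model whose computations are naturally tree-shaped, say a deterministic two-counter machine, and encode a configuration as an $\mathcal{L}$-formula built from $\cdot$ over a fixed finite set of variables (one per symbol and per state). Here the absence of associativity is a feature rather than an obstacle: the bracketing of the formula pins down exactly where a rewrite may occur, so a single machine transition is mirrored faithfully by one use of a hypothesis, whereas in the associative calculus the uncontrolled regrouping of a string is precisely what forces decidability of provability. Each transition $\ell \to r$ is recorded as a sequent $\ell \Rightarrow r$ (reversibly, also $r \Rightarrow \ell$, where needed), and ``the machine halts from the initial configuration'' is encoded as derivability of a single target sequent $s$ relating the initial and accepting configurations.

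The verification splits into two halves. Soundness --- if the machine halts then $\mathcal{S} \vdash_{\mathbf{FNL}_R} s$ --- is routine: by induction on the length of the computation, each transition is simulated by the corresponding hypothesis together with $(cut)$ and the product rules $(\Rightarrow \cdot)$ and $(\cdot \Rightarrow)$, which propagate a subformula replacement through a context, and the accepting derivation is assembled by chaining these steps with $(cut)$. Faithfulness --- if $\mathcal{S} \vdash_{\mathbf{FNL}_R} s$ then the machine halts --- is proved contrapositively by building, for any non-halting instance, a separating model in $\mathsf{RLUG}_{\mathsf{R}}$. The natural construction is a residuated frame of the kind used in this paper: take $G$ to be the free unital groupoid generated by the symbols, let $T = U_G \times W$ consist of pairs of a unary linear polynomial and a configuration, and set $x \N (u,w)$ iff $u(x)$ reaches $w$ under the rules. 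As for the frames $\mathbf{F}_{\mathbf{A},\mathbf{B}}$, putting $x \dbackslash{(u,w)} = (u(x \cdot \_),w)$ and $(u,w) \dslash{y} = (u(\_ \cdot y),w)$ makes $\N$ nuclear, so by Lemma~\ref{closure} the dual algebra $\mathbf{W}^+$ is a complete $r\ell u$-groupoid; every sequent of $\mathcal{S}$ holds in it because $\N$ is closed under the rewrite steps, while $s$ fails because the accepting configuration is not reachable, whence interpreting each variable by its principal closed set refutes $s$.

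The main obstacle is carrying out faithfulness uniformly for every $R \subseteq \{e,c\}$. To guarantee $\mathbf{W}^+ \in \mathsf{RLUG}_{\mathsf{R}}$ one must, mirroring Lemma~\ref{rule}, close $\N$ under the frame rules $[e]$ (when $e \in R$) and $[c]$ (when $c \in R$), so that $(\mathsf{e})$ and $(\mathsf{c})$ become valid in $\mathbf{W}^+$; the encoding and the relation $\N$ must then be chosen so that forcing the ambient groupoid to be commutative and/or square-increasing neither collapses distinct configurations nor validates spurious reachabilities. Contraction is the delicate case: the inequation $x \le x \cdot x$ permits duplication of subconfigurations, so the transition format must be arranged so that a duplicated fragment can never fire an illegitimate rewrite. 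Designing a single encoding robust to both structural rules at once --- rather than one tailored to a fixed $R$ --- is the crux, and it is the reason a naive reduction from the associative semigroup word problem (which would become decidable once commutativity is imposed) cannot be used here.
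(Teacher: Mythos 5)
First, a point of comparison: the paper does not prove Theorem~\ref{Chv} at all --- it is imported as a black box from Chvalovsk\'{y} \cite{Ch15}, so there is no in-paper argument to measure your attempt against. Judged on its own terms, your proposal is a strategy outline rather than a proof. The scaffolding you describe (passing to $\mathcal{S}\models_{\mathsf{RLUG}_{\mathsf{R}}}s$ via Lemma~\ref{compmodalfree1}, simulating transitions with $(cut)$ and the product rules for soundness, and building a residuated-frame countermodel for faithfulness) is reasonable, but the entire mathematical content of the theorem lives in the step you leave open: exhibiting a concrete encoding for which (i) the remaining connectives $\land,\lor,\backslash,/,1$ create no derivations of $s$ beyond those generated by the intended tree-rewriting system --- showing $N$ is nuclear and invoking Lemma~\ref{closure} gives you a complete $r\ell u$-groupoid, but you still must compute which closed sets interpret the encoding formulas and verify that $s$ actually fails there --- and (ii) the equations $(\mathsf{e})$ and $(\mathsf{c})$, when imposed, neither collapse distinct configurations nor validate spurious reachabilities. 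You explicitly name (ii) as ``the crux'' and do not resolve it, so the proposal establishes nothing for $R\neq\varnothing$, and even for $R=\varnothing$ no encoding is ever specified or verified.

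Second, the guiding intuition is backwards. You assert that non-associativity is ``a feature rather than an obstacle'' because bracketing ``pins down exactly where a rewrite may occur,'' and that in the associative calculus regrouping ``forces decidability.'' In fact the finitary consequence relation of associative full Lambek calculus is undecidable by a routine reduction from the word problem for semigroups, and the reason Chvalovsk\'{y}'s result required a genuinely new construction is precisely that this standard reduction breaks once bracketing matters: a rewrite $\ell\to r$ can no longer be applied to an arbitrary substring of a flat word, and one must engineer a Turing-complete rewriting discipline on binary trees while controlling how the residuals and lattice operations interact with it. (Incidentally, computations of a deterministic two-counter machine are linear sequences of configurations, not ``naturally tree-shaped.'') So the proposal both omits the hard construction and misidentifies where the difficulty lies.
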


On the other hand, Buszkowski had already proved in \cite{Bus16} that the finitary consequence relations for $\mathbf{InFNL}$ and $\mathbf{CyInFNL}$ are both undecidable. 
This result is slightly extended to the following theorem by Corollary~\ref{conserv} and Theorem~\ref{Chv}:
\begin{Thm}
	\label{FCNL}
	Let $R \subseteq \{e,c\}$. The finitary consequence relations for $\mathbf{InFNL}_R$ and $\mathbf{CyInFNL}_R$ are both undecidable.
\end{Thm}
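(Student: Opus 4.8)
The plan is to obtain the undecidability of the finitary consequence relations for $\mathbf{InFNL}_R$ and $\mathbf{CyInFNL}_R$ by a direct many-one reduction from the corresponding relation for $\mathbf{FNL}_R$, whose undecidability is guaranteed by Theorem~\ref{Chv}. The only substantial tool needed is the strong conservativity established in Corollary~\ref{conserv}, which says that for any finite set $\mathcal{S} \cup \{x \Rightarrow a\}$ of $\mathcal{L}$-sequents,
\[
\mathcal{S} \vdash_{\mathbf{FNL}_R} x \Rightarrow a \iff \mathcal{S} \vdash_{\mathbf{InFNL}_R} x \Rightarrow a \iff \mathcal{S} \vdash_{\mathbf{CyInFNL}_R} x \Rightarrow a.
\]

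First I would observe that, since $\mathcal{L} \subseteq \mathcal{L}^0$, every $\mathcal{L}$-sequent is in particular a legitimate $\mathcal{L}^0$-sequent, and hence every instance $\mathcal{S} \cup \{s\}$ of the $\mathbf{FNL}_R$ consequence problem may be read verbatim as an instance of the $\mathbf{InFNL}_R$ (resp. $\mathbf{CyInFNL}_R$) consequence problem. This inclusion is trivially computable --- it is the identity on inputs --- and Corollary~\ref{conserv} guarantees that it preserves the truth value of the consequence question. Consequently, any algorithm deciding whether $\mathcal{S} \vdash_{\mathbf{InFNL}_R} s$ (resp. $\mathcal{S} \vdash_{\mathbf{CyInFNL}_R} s$) for arbitrary finite $\mathcal{L}^0$-sequent instances would, when restricted to $\mathcal{L}$-sequent instances, decide whether $\mathcal{S} \vdash_{\mathbf{FNL}_R} s$.

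Since Theorem~\ref{Chv} asserts that the latter problem is undecidable already for finite sets of $\mathcal{L}$-sequents, no such algorithm can exist, and I would conclude, for each $R \subseteq \{e,c\}$ separately, that the finitary consequence relations for $\mathbf{InFNL}_R$ and $\mathbf{CyInFNL}_R$ are undecidable. There is essentially no obstacle in this final argument: all the real work has already been done in proving Corollary~\ref{conserv} (which in turn rests on the embedding of Theorem~\ref{inv}) and in Chvalovsk\'{y}'s Theorem~\ref{Chv}. The only point to watch is the direction of the reduction: it embeds the smaller language $\mathcal{L}$ into the larger language $\mathcal{L}^0$, so that undecidability of the $\mathcal{L}$-sequent subproblem immediately propagates to the full $\mathcal{L}^0$-system; the converse propagation would require more care, but it is not needed here.
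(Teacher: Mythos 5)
Your proposal is correct and is essentially identical to the paper's own argument, which likewise derives Theorem~\ref{FCNL} directly from the strong conservativity of $\mathbf{InFNL}_R$ and $\mathbf{CyInFNL}_R$ over $\mathbf{FNL}_R$ (Corollary~\ref{conserv}) together with Chvalovsk\'{y}'s undecidability result (Theorem~\ref{Chv}). You merely spell out the trivial identity reduction more explicitly than the paper does; there is no substantive difference.
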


\begin{Lem}
	\label{NACCLL}
    Let $\mathsf{R} \subseteq \{\mathsf{e},\mathsf{c},\mathsf{w}\}$. Every member of $\mathsf{InRLUG}_{\mathsf{R}}$ (resp. $\mathsf{CyInRLUG}_{\mathsf{R}}$) is embedded into the $\mathcal{L}^0$-reduct of a member of $\mathsf{NACCLL}^-_{\mathsf{R}}$ (resp. $\mathsf{NACCLL}_{\mathsf{R}}$).
\end{Lem}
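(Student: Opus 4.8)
The plan is to observe that the $\mathcal{L}^0$-reduct of an NACCLL$^-$-algebra (resp.\ NACCLL-algebra) is exactly an involutive (resp.\ cyclic involutive) $r\ell uz$-groupoid, so the whole task reduces to equipping a suitable $\mathcal{L}^0$-extension of $\mathbf{A}$ with a conucleus $\oc$ satisfying $(\oc\mathsf{i})$, $(\oc\mathsf{c})$, $(\oc\mathsf{e})$, $(\oc\mathsf{a1})$, $(\oc\mathsf{a2})$, in such a way that $\mathbf{A}$ embeds $\mathcal{L}^0$-algebraically and the equations in $\mathsf{R}$ (and cyclicity) are preserved. The five modal equations force the image of $\oc$ to consist of central, associating, square-increasing elements below $1$, and the cheapest conucleus of this kind is the one whose image is the two-element set $\{1,\bot\}$, where $\bot$ is an absorbing bottom, namely
\[
\oc x =
\begin{cases}
1 & \text{if } 1 \le x,\\
\bot & \text{otherwise.}
\end{cases}
\]

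First I would secure a bottom. If $\mathsf{w} \in \mathsf{R}$, then $\mathbf{A}$ is integral and zero-bounded, hence already bounded with top $1$ and bottom $0$, and I take $\bot=0$ on $\mathbf{A}$ itself (the displayed $\oc$ then sends $x$ to $1$ exactly when $x=1$). If $\mathsf{w}\notin\mathsf{R}$, then $\mathbf{A}$ need not be bounded, so I would adjoin a fresh top $\top$ and bottom $\bot=\negr\top$, declaring $\bot$ absorbing and $\top\cdot z=z\cdot\top=\top$ for $z\neq\bot$, extending the order by $\bot< x<\top$ for $x\in A$, and forcing the residuals by involution exactly as in the proof of Theorem~\ref{inv}. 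Because $\mathsf{i}\notin\mathsf{R}$ in this case, adjoining an element above $1$ violates no equation of $\mathsf{R}$, and by defining the new products symmetrically both $(\mathsf{e})$ and $(\mathsf{c})$ are preserved. Either way I obtain a bounded involutive (resp.\ cyclic involutive) $r\ell uz$-groupoid $\mathbf{A}^{\flat}$ in $\mathsf{InRLUG}_{\mathsf{R}}$ (resp.\ $\mathsf{CyInRLUG}_{\mathsf{R}}$) into whose $\mathcal{L}^0$-reduct $\mathbf{A}$ embeds; that this adjunction is legitimate and leaves the residuals of old elements unchanged is routine and entirely parallel to Lemma~\ref{appa}.

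Next I would check that the displayed $\oc$ is a conucleus on $\mathbf{A}^{\flat}$ satisfying the five equations. Every verification collapses to two observations: $1$ is central, associates, and satisfies $1\le 1\cdot 1$, while $\bot$ is absorbing and satisfies $\bot\le\bot\cdot\bot$. For instance $\oc x\cdot\oc y\le\oc(x\cdot y)$ holds because $\oc x=\oc y=1$ forces $1\le x$ and $1\le y$, whence $1\le x\cdot y$, while in every other case the left-hand side is $\bot$; likewise $(\oc\mathsf{e})$, $(\oc\mathsf{a1})$, $(\oc\mathsf{a2})$ hold since each element of $\{1,\bot\}$ is central and associating, and $(\oc\mathsf{i})$, $(\oc\mathsf{c})$ are immediate. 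As $\oc$ leaves the negations untouched, $\mathbf{A}^{\flat}$ together with $\oc$ is an NACCLL$^-$-algebra, cyclic whenever $\mathbf{A}$ is; hence $\mathbf{A}$ embeds into the $\mathcal{L}^0$-reduct of a member of $\mathsf{NACCLL}^-_{\mathsf{R}}$ (resp.\ $\mathsf{NACCLL}_{\mathsf{R}}$).

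The main obstacle is the bounding step when $\mathsf{w}\notin\mathsf{R}$: one must confirm that adjoining $\top$ and $\bot$ genuinely yields an involutive $r\ell uz$-groupoid, i.e.\ that the double-negation laws $\negrl x=x=\neglr x$ and the involutive residuation equivalences survive every mixed case involving $\top$ and $\bot$, that cyclicity is preserved, and that the embedding respects $\backslash$ and $/$. This is fiddly but purely mechanical, and I would relegate the case analysis to an appendix as was done for Lemma~\ref{appa}; by contrast, choosing the two-element image $\{1,\bot\}$ makes the conucleus verification short, since it reduces each modal equation to a triviality about a unit and an absorbing element.
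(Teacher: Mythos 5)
Your proposal is correct, but it takes a genuinely different route from the paper's. The paper never modifies $\mathbf{A}$ itself: it passes to the Dedekind--MacNeille completion $\mathbf{F}^{+}_{\mathbf{A}}$ (which stays inside $\mathsf{InRLUG}_{\mathsf{R}}$, resp.\ $\mathsf{CyInRLUG}_{\mathsf{R}}$, by results cited from \cite{GJ13}), takes $S$ to be the set of all elements of $A$ that are integral, idempotent, central and associate with everything, and defines the conucleus on closed sets by $\oc X=\gamma_{\leq}(X\cap S)$. That construction is uniform in $\mathsf{R}$: there is no case split on $\mathsf{w}$ and no need to manufacture a bottom, since the complete algebra already has one, and all the nontrivial preservation facts are imported from the residuated-frames machinery already set up in Section~\ref{main}. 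You instead adjoin bounds by hand when $\mathsf{w}\notin\mathsf{R}$ and then use the two-valued conucleus with image $\{1,\bot\}$. What you gain is an essentially trivial conucleus verification (every one of the five equations reduces to $1$ being a central unit and $\bot$ being absorbing) and the fact that the extension adds at most two elements; what you pay is the bounding step, namely the full case analysis showing that $A\cup\{\top,\bot\}$ with the absorbing/dominating products is still an involutive (resp.\ cyclic involutive) $r\ell uz$-groupoid satisfying $\mathsf{R}$, with the old residuals and negations undisturbed. I checked this: one gets $\negr \top=\negl \top=\bot$ and $\negr \bot=\negl \bot=\top$, the three-way residuation condition holds in every mixed case, old negations are unchanged because $a\cdot \top=\top\not\leq 0$, and your observation that $0$ is absorbing when $\mathsf{w}\in\mathsf{R}$ (from $0\leq 0\cdot x\leq 0\cdot 1=0$) is also right. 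So deferring that analysis to an appendix in the style of Lemma~\ref{appa} is a legitimate presentational choice rather than a gap, though it is precisely the work the paper's completion-based argument avoids.
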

\begin{proof}
	Let $\mathbf{A}$ be a member of $\mathsf{InRLUG}_{\mathsf{R}}$.
    One sees that the Dedekind-MacNeille completion  $\mathbf{F}^{+}_{\mathbf{A}}=(\gamma_{\leq}[{\mathcal{P}}(A)],\cap,\cup_{\gamma_{\leq}},\circ_{\gamma_{\leq}},\backslash,/,\gamma_{\leq}(\{1^{\mathbf{A}}\}),\{0^{\mathbf{A}}\}^{\lhd})$ of $\mathbf{A}$ is also a member of $\mathsf{InRLUG}_{\mathsf{R}}$, and that the map $\{\_\}^{\lhd} \colon A \to \gamma_{\leq}[{\mathcal{P}}(A)]$ is an embedding of $\mathbf{A}$ into $\mathbf{F}^{+}_{\mathbf{A}}$; see \cite[Section 4]{GJ13} for details. 
	We put:
	\[
	S:=\{x \in A \mid x\leq1,x=x\cdot x,\forall a \in A(x\cdot a=a\cdot x),\forall a,b \in A[x\cdot (a\cdot b)=(x\cdot a)\cdot b,(a\cdot b)\cdot x=a\cdot (b\cdot x)]\}.
	\]
	For any $X \in \gamma_{\leq}[\mathcal{P}(A)]$, we put $\oc X=\gamma_{\leq}(X \cap S)$.
	We denote by $\mathbf{D}$ the algebra $\mathbf{F}^{+}_{\mathbf{A}}$ equipped with the new operation $\oc$. 
	It is easy to check that the operation $\oc$ forms a conucleus satisfying the equations $(\oc\mathsf{e})$, $(\oc\mathsf{c})$, $(\oc\mathsf{i})$ $(\oc\mathsf{a1})$, and $(\oc\mathsf{a2})$.
	Hence $\mathbf{D}$ is a member of $\mathsf{NACCLL}^-_{\mathsf{R}}$ and $\mathbf{A}$ is embedded into the $\mathcal{L}^0$-reduct of $\mathbf{D}$. 
	Clearly, if $\mathbf{A}$ is a member of $\mathsf{CyInRLUG}_{\mathsf{R}}$, then so is $\mathbf{F}^+_{\mathbf{A}}$; thus $\mathbf{D}$ is a member of $\mathsf{NACCLL}_{\mathsf{R}}$ and $\mathbf{A}$ is embeddable into the $\mathcal{L}^0$-reduct of $\mathbf{D}$.
\end{proof}

Using the same argument as that in the proof of Corollary~\ref{conserv}, we have:
\begin{Cor}
	\label{conserv2}
	Let $R \subseteq \{e,c,w\}$. $\mathbf{NACCLL}^-_R$ (resp. $\mathbf{NACCLL}_R$) is strongly conservative over $\mathbf{InFNL}_R$ (resp. $\mathbf{CyInFNL}_R$).
\end{Cor}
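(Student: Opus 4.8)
The plan is to mirror the proof of Corollary~\ref{conserv} almost verbatim, replacing the algebraic embedding supplied by Theorem~\ref{inv} with the one from Lemma~\ref{NACCLL}, and swapping in the completeness results appropriate to the present languages. I would treat the $\mathbf{NACCLL}^-_R$ case explicitly; the $\mathbf{NACCLL}_R$ case is handled identically after replacing $\mathsf{InRLUG}_{\mathsf{R}}$ and $\mathsf{NACCLL}^-_{\mathsf{R}}$ by $\mathsf{CyInRLUG}_{\mathsf{R}}$ and $\mathsf{NACCLL}_{\mathsf{R}}$. Throughout, fix $R \subseteq \{e,c,w\}$ and a set $\mathcal{S} \cup \{x \Rightarrow \delta\}$ of $\mathcal{L}^0$-sequents.

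One direction is immediate: since $\mathbf{InFNL}_R$ is the $\oc$-free (that is, $\mathcal{L}^0$-) fragment of $\mathbf{NACCLL}^-_R$, any proof witnessing $\mathcal{S} \vdash_{\mathbf{InFNL}_R} x \Rightarrow \delta$ is already a proof in $\mathbf{NACCLL}^-_R$. So it remains only to establish conservativity, which I would prove by contraposition.

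Assume $\mathcal{S} \not\vdash_{\mathbf{InFNL}_R} x \Rightarrow \delta$. By Lemma~\ref{compmodalfree2}, there exist a member $\mathbf{A}$ of $\mathsf{InRLUG}_{\mathsf{R}}$ and a valuation $f$ into $\mathbf{A}$ that make every sequent of $\mathcal{S}$ true while falsifying $x \Rightarrow \delta$. By Lemma~\ref{NACCLL}, $\mathbf{A}$ embeds into the $\mathcal{L}^0$-reduct of some $\mathbf{D} \in \mathsf{NACCLL}^-_{\mathsf{R}}$; write $e \colon A \to D$ for this embedding. I would then set $v := e \circ f$ on variables, so that $v(c) = e(f(c))$ for every $\mathcal{L}^0$-formula $c$, since $e$ is an $\mathcal{L}^0$-homomorphism.

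The only point needing care is to verify that this transported valuation preserves the truth of $\mathcal{L}^0$-sequents in both directions. But an $\mathcal{L}^0$-embedding is order-reflecting and preserves the constant $0$, so for any $\mathcal{L}^0$-sequent $y \Rightarrow \gamma$ we have $f(\rho(y)) \leq^{\mathbf{A}} f(\gamma)$ iff $e(f(\rho(y))) \leq^{\mathbf{D}} e(f(\gamma))$, and likewise $f(\rho(y)) \leq^{\mathbf{A}} 0^{\mathbf{A}}$ iff $e(f(\rho(y))) \leq^{\mathbf{D}} 0^{\mathbf{D}}$ for a sequent $y \Rightarrow \epsilon$ with empty stoup. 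Hence $v$ verifies every sequent of $\mathcal{S}$ in $\mathbf{D}$ yet falsifies $x \Rightarrow \delta$, giving $\mathcal{S} \not\models_{\mathsf{NACCLL}^-_{\mathsf{R}}} x \Rightarrow \delta$. Applying the completeness lemma for the full system, Lemma~\ref{compNACCLL-}, then yields $\mathcal{S} \not\vdash_{\mathbf{NACCLL}^-_R} x \Rightarrow \delta$, completing the contrapositive. I do not expect any genuine obstacle: all the substantive work is carried by the embedding of Lemma~\ref{NACCLL}, and the remaining argument is a routine transport of the refuting model along that embedding once the lemma is in hand.
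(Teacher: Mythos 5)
Your proposal is correct and matches the paper's intent exactly: the paper proves Corollary~\ref{conserv2} precisely by rerunning the argument of Corollary~\ref{conserv} with Lemma~\ref{NACCLL} in place of Theorem~\ref{inv} and Lemmas~\ref{compmodalfree2} and \ref{compNACCLL-} as the completeness inputs. Your extra care in composing the valuation with the embedding and checking order-reflection (including the empty-stoup case) is exactly the routine verification the paper leaves implicit.
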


Define the map $\theta \colon Fm_{\mathcal{L}^0_{\oc}} \cup \{\epsilon\} \to Fm_{\mathcal{L}^0_{\oc}}$ by $\epsilon \mapsto 0$ and $a \mapsto a$ for each $a \in Fm_{\mathcal{L}^0_{\oc}}$.
Given an $\mathcal{L}^0_{\oc}$-sequent $s=x\Rightarrow \delta$, we set $\tau(s)=\oc(\rho(x)\backslash\theta(\delta))$. 

\begin{Lem}
	\label{translation}
	Let $R \subseteq \{e,c,w\}$ and $\{s_1,\ldots,s_n\} \cup \{x \Rightarrow \delta\}$ be a finite set of $\mathcal{L}^0_{\oc}$-sequents. Then, 
	\[
	\{s_1,\ldots,s_n\} \vdash_{\mathbf{NACCLL}^-_R} x \Rightarrow \delta \Longleftrightarrow \vdash_{\mathbf{NACCLL}^-_R} x \circ (\tau(s_1) \circ \cdots(\tau(s_{n-1}) \circ \tau(s_n))\cdots)\Rightarrow \delta,
	\]
    and
    \[
    \{s_1,\ldots,s_n\} \vdash_{\mathbf{NACCLL}_R} x \Rightarrow \delta \Longleftrightarrow \vdash_{\mathbf{NACCLL}_R} x \circ (\tau(s_1) \circ \cdots(\tau(s_{n-1}) \circ \tau(s_n))\cdots) \Rightarrow \delta.
    \]
\end{Lem}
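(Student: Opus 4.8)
The plan is to argue entirely proof-theoretically, exploiting that every $\oc$-formula, and hence the whole side context $C:=\tau(s_1)\circ(\cdots\circ(\tau(s_{n-1})\circ\tau(s_n))\cdots)$, lies in $K_{\mathcal{L}^0_{\oc}}$ and therefore behaves as a freely reusable antecedent. The cases $\mathbf{NACCLL}^-_R$ and $\mathbf{NACCLL}_R$ are handled verbatim alike, the cyclic-specific axiom entering only as an initial sequent, so I treat them uniformly. The reason the translation boxes each premise with $\oc$ is transparent on the algebraic side (Lemma~\ref{compNACCLL-}): in any model $f(\tau(s))$ is a central idempotent below $1$ that equals $1$ exactly when $s$ is true under $f$, so prefixing the $\tau(s_i)$ leaves validity unchanged in the relevant direction. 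Proof-theoretically I would first isolate a \textbf{structural lemma}: every $\oc$-formula (and hence every element of $K_{\mathcal{L}^0_{\oc}}$) can be inserted into an arbitrary antecedent position by $(kw)$, two adjacent copies can be merged by $(kc)$ and conversely one copy split into two (from the provable $\oc a\circ\oc a\Rightarrow\oc a$, obtained from $(Id)$ by $(kw)$, via $(cut)$), and copies can be permuted and re-associated by $(ke)$, $(ka1)$, and $(ka2)$. Consequently any two antecedents differing only in the placement and multiplicity of occurrences of $\tau(s_1),\dots,\tau(s_n)$ are interderivable, and every such configuration normalises to the canonical context $C$.

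For the forward implication I would induct on a proof of $x\Rightarrow\delta$ from $\{s_1,\dots,s_n\}$, establishing the slightly stronger claim that every deducible $y\Rightarrow\gamma$ yields $\vdash y\circ C\Rightarrow\gamma$ (unused premises weakened in by the structural lemma). At an initial-sequent leaf one attaches $C$ on the right by a single application of $(kw)$. At an assumption leaf $s_i=x_i\Rightarrow\delta_i$ I first prove the \emph{instantiation} fact $\vdash x_i\circ\tau(s_i)\Rightarrow\delta_i$: combine the standard provable sequent $\vdash x_i\Rightarrow\rho(x_i)$ with $(\backslash\Rightarrow)$ to obtain $x_i\circ(\rho(x_i)\backslash\theta(\delta_i))\Rightarrow\theta(\delta_i)$, replace the implication by its $\oc$-box via $(\oc\Rightarrow)$, and, when $\delta_i=\epsilon$, restore the empty stoup by a $(cut)$ against $(0\Rightarrow)$; then weaken in the remaining $\tau(s_j)$ and normalise to $C$. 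At an inference step the context is carried through the hole of the rule: single-premise rules are reapplied inside the enlarged context $u(\cdot)\circ C$, where crucially the reversible rules $(ka1)$ and $(ka2)$, available precisely because $C\in K_{\mathcal{L}^0_{\oc}}$, let me re-associate $C$ past the non-associative groupoid structure so that rules such as $(\Rightarrow\backslash)$ still apply; two-premise rules (including $(cut)$) generate two copies of $C$, which the structural lemma then contracts back to one.

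The backward implication is short. From $s_i$ alone one deduces $\vdash\varepsilon\Rightarrow\tau(s_i)$: fold the antecedent of $s_i$ into the formula $\rho(x_i)$ using $(\cdot\Rightarrow)$ and $(1\Rightarrow)$, pass to $\rho(x_i)\Rightarrow\theta(\delta_i)$ (adjusting the stoup by $(\Rightarrow 0)$ when $\delta_i=\epsilon$), and read off $\varepsilon\Rightarrow\oc(\rho(x_i)\backslash\theta(\delta_i))$ by $(\Rightarrow\backslash)$ and $(\Rightarrow\oc)$, using $\varepsilon\in K_{\mathcal{L}^0_{\oc}}$. Given the provable single sequent $\vdash x\circ C\Rightarrow\delta$, I then $(cut)$ each $\varepsilon\Rightarrow\tau(s_i)$ against the corresponding occurrence of $\tau(s_i)$ in $C$; since $\varepsilon$ is the groupoid unit, the successive cuts collapse $C$ and leave $\{s_1,\dots,s_n\}\vdash x\Rightarrow\delta$. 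I expect the genuine difficulty to lie wholly in the forward induction, namely in the bookkeeping that moves, duplicates, and re-merges the $\oc$-context through a non-associative calculus: without associativity one cannot treat $C$ as a mere set of formulas, and the entire argument hinges on the reversible rules $(ke)$, $(ka1)$, $(ka2)$ together with two-way contraction for $K$-elements to normalise every intermediate antecedent back to the canonical shape demanded by the statement.
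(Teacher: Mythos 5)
Your proposal is correct and follows essentially the same route as the paper: the left-to-right direction by induction on proofs (with the strengthened invariant that the $\oc$-context $C$ can be weakened in, moved across the groupoid structure via $(ke)$, $(ka1)$, $(ka2)$, and contracted via $(kc)$ at two-premise rules), and the right-to-left direction by deriving $\{s_1,\ldots,s_n\}\vdash \varepsilon\Rightarrow\tau(s_i)$ and eliminating the $\tau(s_i)$ by successive cuts. The paper only sketches this argument, so your write-up supplies exactly the bookkeeping it leaves implicit.
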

\begin{proof}
	We sketch the proof of the first equivalence. 
	(The second equivalence is shown in the same manner.)
	The \emph{left-to-right} direction is shown by induction on proofs of $x \Rightarrow \delta$ in $\mathbf{NACCLL}^-_R$ from $\{s_1,\ldots,s_n\}$. 
	For the \emph{right-to-left} direction, check that $\{s_1,\ldots,s_n\} \vdash_{\mathbf{NACCLL}^-_R} \varepsilon \Rightarrow \tau(s_i)$ for each $i \in \{1,\ldots,n\}$. 
	Since $\{s_1,\ldots,s_n\} \vdash_{\mathbf{NACCLL}^-_R} x \circ (\tau(s_1) \circ \cdots(\tau(s_{n-1}) \circ \tau(s_n))\cdots) \Rightarrow \delta$, we have $\{s_1,\ldots,s_n\} \vdash_{\mathbf{NACCLL}^-_R} x \Rightarrow \delta$, using the cut rule several times. 
\end{proof}

By Corollary~\ref{conserv2} and Lemma~\ref{translation}, we have:
\begin{Cor}
	\label{translation2}
	Let $R \subseteq \{e,c,w\}$ and $\{s_1,\ldots,s_n\} \cup \{x \Rightarrow \delta\}$ be a finite set of $\mathcal{L}^0$-sequents. Then, 
		\[
		\{s_1,\ldots,s_n\} \vdash_{\mathbf{InFNL}_R} x \Rightarrow \delta \Longleftrightarrow \vdash_{\mathbf{NACCLL}^-_R} x \circ (\tau(s_1) \circ \cdots(\tau(s_{n-1}) \circ \tau(s_n))\cdots) \Rightarrow \delta,
		\]
		and
		\[
		\{s_1,\ldots,s_n\} \vdash_{\mathbf{CyInFNL}_R} x \Rightarrow \delta \Longleftrightarrow \vdash_{\mathbf{NACCLL}_R} x \circ (\tau(s_1) \circ \cdots(\tau(s_{n-1}) \circ \tau(s_n))\cdots) \Rightarrow \delta.
		\]
\end{Cor}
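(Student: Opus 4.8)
The plan is to obtain this corollary purely by chaining the two results that immediately precede it, the strong conservativity of Corollary~\ref{conserv2} and the translation of Lemma~\ref{translation}; no new construction is required. The guiding observation is that although the hypotheses $s_1,\dots,s_n$ and the conclusion $x\Rightarrow\delta$ are $\mathcal{L}^0$-sequents (hence $\oc$-free), each translated formula $\tau(s_i)$ genuinely involves the modality $\oc$ (recall $\tau(s)=\oc(\rho(x)\backslash\theta(\delta))$), so the right-hand side of each claimed equivalence necessarily lives in the full language $\mathcal{L}^0_{\oc}$ and its provability must be read in the modal system $\mathbf{NACCLL}^-_R$ (resp. $\mathbf{NACCLL}_R$). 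This fixes the order in which the two steps must be applied.

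Concretely, writing $\mathcal{S}=\{s_1,\dots,s_n\}$, I would first invoke Corollary~\ref{conserv2}: since $\mathcal{S}\cup\{x\Rightarrow\delta\}$ is a set of $\mathcal{L}^0$-sequents and $R\subseteq\{e,c,w\}$, strong conservativity of $\mathbf{NACCLL}^-_R$ over $\mathbf{InFNL}_R$ gives $\mathcal{S}\vdash_{\mathbf{InFNL}_R} x\Rightarrow\delta$ iff $\mathcal{S}\vdash_{\mathbf{NACCLL}^-_R} x\Rightarrow\delta$. Then, regarding the very same $\mathcal{L}^0$-sequents as a special case of $\mathcal{L}^0_{\oc}$-sequents, I would apply Lemma~\ref{translation} to the system $\mathbf{NACCLL}^-_R$ to get $\mathcal{S}\vdash_{\mathbf{NACCLL}^-_R} x\Rightarrow\delta$ iff $\vdash_{\mathbf{NACCLL}^-_R} x\circ(\tau(s_1)\circ\cdots(\tau(s_{n-1})\circ\tau(s_n))\cdots)\Rightarrow\delta$. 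Composing these two equivalences yields the first assertion, and the cyclic case is word-for-word identical, using the $\mathbf{CyInFNL}_R$/$\mathbf{NACCLL}_R$ halves of Corollary~\ref{conserv2} and Lemma~\ref{translation}.

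I expect no genuine obstacle here beyond bookkeeping; the one point requiring care is the language matching just described. I would verify that the hypotheses of both prerequisites hold: each is stated for $R\subseteq\{e,c,w\}$, exactly our assumption, and the finiteness of $\{s_1,\dots,s_n\}$ demanded by Lemma~\ref{translation} is given. The conservativity step must precede the translation step: one cannot translate first and then appeal to conservativity, since after translation the sequent is no longer $\oc$-free and thus falls outside the $\mathcal{L}^0$-fragment over which conservativity is asserted.
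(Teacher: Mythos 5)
Your proposal is correct and matches the paper's own derivation, which obtains Corollary~\ref{translation2} precisely by combining Corollary~\ref{conserv2} with Lemma~\ref{translation}. Your additional remark about applying conservativity before translation (since $\tau(s_i)$ involves $\oc$) is a sensible clarification of the same argument.
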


Hence by Theorem~\ref{FCNL} and Corollary~\ref{translation2}, we have:
\begin{Thm}
	\label{classicalundecidable}
	Let $R \subseteq \{e,c\}$. The provability problems for $\mathbf{NACCLL}^-_R$ and $\mathbf{NACCLL}_R$ are undecidable.
\end{Thm}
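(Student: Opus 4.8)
The plan is to establish undecidability by a computable many-one reduction from an already-known undecidable problem, namely the finitary consequence relation for $\mathbf{InFNL}_R$ (resp.\ $\mathbf{CyInFNL}_R$), whose undecidability is recorded in Theorem~\ref{FCNL}. The entire argument is a short wrap-up, since the substantive content has already been isolated in Corollary~\ref{translation2}. First I would fix $R \subseteq \{e,c\}$, noting that then $R \subseteq \{e,c,w\}$ so that Corollary~\ref{translation2} applies, and recall that by Theorem~\ref{FCNL} there is no algorithm deciding, for a finite set $\{s_1,\ldots,s_n\} \cup \{x \Rightarrow \delta\}$ of $\mathcal{L}^0$-sequents, whether $\{s_1,\ldots,s_n\} \vdash_{\mathbf{InFNL}_R} x \Rightarrow \delta$ holds.

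Next I would observe that the map sending such a finite instance to the single $\mathcal{L}^0_{\oc}$-sequent
\[
x \circ (\tau(s_1) \circ \cdots (\tau(s_{n-1}) \circ \tau(s_n)) \cdots) \Rightarrow \delta
\]
is effectively computable: the translation $\tau(s) = \oc(\rho(x) \backslash \theta(\delta))$ and the right-nested product are purely syntactic operations on formulas. By the first equivalence in Corollary~\ref{translation2},
\[
\{s_1,\ldots,s_n\} \vdash_{\mathbf{InFNL}_R} x \Rightarrow \delta \iff {\vdash_{\mathbf{NACCLL}^-_R}} x \circ (\tau(s_1) \circ \cdots (\tau(s_{n-1}) \circ \tau(s_n)) \cdots) \Rightarrow \delta,
\]
so this computable map is a many-one reduction of the finitary consequence problem for $\mathbf{InFNL}_R$ to the provability problem for $\mathbf{NACCLL}^-_R$. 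Were the latter decidable, composing a decision procedure for it with the reduction would decide the former, contradicting Theorem~\ref{FCNL}; hence the provability problem for $\mathbf{NACCLL}^-_R$ is undecidable. Repeating the identical reasoning with the second equivalence in Corollary~\ref{translation2}, using $\mathbf{CyInFNL}_R$ in place of $\mathbf{InFNL}_R$ and $\mathbf{NACCLL}_R$ in place of $\mathbf{NACCLL}^-_R$, yields
\[
\{s_1,\ldots,s_n\} \vdash_{\mathbf{CyInFNL}_R} x \Rightarrow \delta \iff {\vdash_{\mathbf{NACCLL}_R}} x \circ (\tau(s_1) \circ \cdots (\tau(s_{n-1}) \circ \tau(s_n)) \cdots) \Rightarrow \delta,
\]
and therefore the undecidability of the provability problem for $\mathbf{NACCLL}_R$.

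As for the main obstacle, there is essentially none remaining at this stage: all the real work lives in Corollary~\ref{translation2} (and behind it Lemma~\ref{translation}, the deduction-theorem-style internalization of finitely many hypotheses via $\tau$ and $\oc$) and in the transfer of undecidability through the conservativity of Corollary~\ref{conserv2}. The only points that demand care are that the reduction lands in a \emph{provability} instance rather than in another consequence instance, and that the map is genuinely effective; both are immediate from the explicit form of $\tau$. I would emphasize in passing that the modality $\oc$ is what makes this reduction possible at all, since it is precisely what allows a finite set of assumptions to be collapsed into a single antecedent formula.
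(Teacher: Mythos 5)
Your proposal is correct and follows exactly the paper's own route: the paper derives the theorem immediately from Theorem~\ref{FCNL} together with Corollary~\ref{translation2}, which is precisely the reduction you describe. The additional remarks on computability of the translation and on the role of $\oc$ are accurate but left implicit in the paper.
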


\appendix
\def\thesection{Appendix \Alph{section}}

\section{Proof of Lemma~\ref{appa}}
\label{appendixa}
\let\temp\theLEM
\renewcommand{\theLEM}{\ref{appa}}
\begin{LEM}
	\begin{enumerate}
		\item $(A^{\star},\leq^{\star},\cdot^{\star},1^{\star})$ is a lattice-ordered unital groupoid.
		\item $x \cdot^{\star} y \leq^{\star} z$ iff $\negr z \cdot^{\star} x \leq^{\star} \negr y$ iff $y \cdott \negr z \leq^{\star} \negr x$, for any $x,y,z \in A^{\star}$.
	\end{enumerate}
\end{LEM}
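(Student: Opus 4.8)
The plan is to handle the two clauses by case analysis over the four ``layers'' $A$, $A^{\sim}$, $\{\top\}$, $\{\bot\}$ of $A^{\star}$, exploiting the residuation laws of $\mathbf{A}$ together with a rotation symmetry that collapses most of the work in clause (2). First I would dispose of the skeletal parts of clause (1). That $\leq^{\star}$ is a partial order is immediate from the layered description, since $\leq$ is a partial order on $A$, its reversal is one on $A^{\sim}$, and the layers are linearly separated by $\bot <^{\star} A <^{\star} A^{\sim} <^{\star} \top$. For the lattice structure, two elements of $A$ (resp.\ of $A^{\sim}$) have their meet and join given by $\land,\lor$ (resp.\ dually, e.g.\ $x^{\sim}\land^{\star} y^{\sim}=(x\lor y)^{\sim}$), a mixed pair from $A$ and $A^{\sim}$ is comparable, and $\top,\bot$ serve as top and bottom; so $(A^{\star},\leq^{\star})$ is a lattice. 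It is worth recording at this point that $\negr$ is an order-reversing bijection with $\negrr x=x$, i.e.\ a dual automorphism of the poset; this follows directly from the definitions of $\leq^{\star}$ and $\negr$ and makes the lattice self-dual. The unit laws $1^{\star}\cdot^{\star} x=x=x\cdot^{\star} 1^{\star}$ are then checked layer by layer, using $y/1=y=1\backslash y$ in $\mathbf{A}$ for the $A^{\sim}$-case.

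The only remaining content of clause (1) is the compatibility of $\cdot^{\star}$ with $\leq^{\star}$, i.e.\ monotonicity in each argument. Rather than verify this directly, I would prove clause (2) first --- its proof, below, uses only the order and the definition of $\cdot^{\star}$, never monotonicity --- and then deduce monotonicity cheaply. Given $y\leq^{\star} y'$, put $z:=x\cdot^{\star} y'$; applying the equivalence $x\cdot^{\star} y'\leq^{\star} z\iff \negr z\cdot^{\star} x\leq^{\star}\negr{y'}$ to the reflexive inequality $x\cdot^{\star} y'\leq^{\star} z$, using antitonicity of $\negr$ to pass from $\negr{y'}$ to $\negr y$, and reading the equivalence backwards, yields $x\cdot^{\star} y\leq^{\star} x\cdot^{\star} y'$; left monotonicity is symmetric via the equivalence involving $y\cdot^{\star}\negr z$. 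This keeps the logical order non-circular: poset and lattice structure and unit laws, then clause (2), then compatibility.

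For clause (2), the key observation is that the three inequalities are permuted by the substitution $(x,y,z)\mapsto(\negr z,x,\negr y)$, which --- since $\negrr x=x$ --- has period three: it carries the first inequality to the second, the second to the third, and the third back to the first. Because this substitution is a bijection of $(A^{\star})^3$, it suffices to prove the single implication $x\cdot^{\star} y\leq^{\star} z\Rightarrow \negr z\cdot^{\star} x\leq^{\star}\negr y$ for all $x,y,z$, and the full chain of equivalences then closes automatically. I would establish this implication by cases on the layers. Every case with some argument equal to $\top$ or $\bot$ is either vacuous (the hypothesis fails) or trivial (the conclusion reduces to $\bot\leq^{\star}(\,\cdot\,)$ or $(\,\cdot\,)\leq^{\star}\top$). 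The substantive cases are those with $x,y,z\in A\cup A^{\sim}$, where, after unwinding the definitions, each side collapses to a single residuation inequality in $\mathbf{A}$: for instance when $x,y,z\in A$ both sides assert $x\cdot y\leq z$, while when $x\in A$ and $y,z\in A^{\sim}$ both sides amount to the same residual inequality.

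The main obstacle is bookkeeping rather than ideas. One must track the order reversal on $A^{\sim}$ carefully (so that $x^{\sim}\leq^{\star} y^{\sim}$ reads as $y\leq x$) and translate each product $x^{\sim}\cdot^{\star} y=(y\backslash x)^{\sim}$ and $x\cdot^{\star} y^{\sim}=(y/x)^{\sim}$ correctly into the matching residual, taking care which residuation law ($b\leq a\backslash c$ versus $a\leq c/b$) applies. The rotation symmetry is exactly what makes this tractable, reducing what would otherwise be a threefold layer analysis to one pass, and the antitone involution $\negr$ is what lets clause (2) feed back to finish clause (1).
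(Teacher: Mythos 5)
Your proposal is correct, and it reorganizes the argument in a way that genuinely differs from the paper's proof. The paper proves compatibility of $\cdot^{\star}$ with $\leq^{\star}$ head-on, via a lengthy case analysis over the layers $A$, $A^{\sim}$, $\{\top\}$, $\{\bot\}$, and only afterwards establishes clause (2), again by cases, proving the full biconditional $x\cdot^{\star}y\leq^{\star}z\iff{}\sim\!\!z\cdot^{\star}x\leq^{\star}\,\sim\!\!y$ in each case (it does already use the rotation $(x,y,z)\mapsto(\sim\!\!z,x,\sim\!\!y)$ to reduce the three-way equivalence to this single biconditional). You push the same rotation one step further: since the substitution has period three, a single implication proved for all triples closes the cycle, which roughly halves the case work in clause (2); and you then recover monotonicity of $\cdot^{\star}$ as a two-line corollary of clause (2) together with the antitonicity of $\sim$ (which you correctly obtain directly from the definitions of $\leq^{\star}$ and $\sim$, rather than from the lemma itself as the paper does, so no circularity arises). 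The one point to be careful about, which you address explicitly, is that the case analysis for clause (2) must not invoke monotonicity of $\cdot^{\star}$; inspection of the cases confirms that only the definitions of $\cdot^{\star}$, $\leq^{\star}$, $\sim$ and the residuation laws of $\mathbf{A}$ are needed, so the logical order you propose is sound. The underlying computations in the substantive cases (unwinding $x^{\sim}\cdot^{\star}y=(y\backslash x)^{\sim}$ and $x\cdot^{\star}y^{\sim}=(y/x)^{\sim}$ into residuation inequalities of $\mathbf{A}$) coincide with the paper's; what your route buys is the elimination of the entire standalone monotonicity verification and a halving of the remaining case analysis, at the price of a slightly more delicate dependency structure among the claims. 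A very minor imprecision: in some mixed-layer cases (e.g.\ $x,y\in A$, $z\in A^{\sim}$) both sides of the implication are unconditionally true rather than collapsing to a common residuation inequality, but this only makes those cases easier.
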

\begin{proof}
Firstly, we show that Statement (1) holds. 
Let $x \in A^{\sim}$, i.e., $x=y^{\sim}$ for some $y \in A$. 
Then, $y^{\sim} \cdot^{\star} 1^{\star}=y^{\sim} \cdot^{\star} 1=(1\backslash y)^{\sim}=y^{\sim}$, and $1^{\star} \cdot^{\star }y^{\sim}=1 \cdot^{\star }y^{\sim}=(y/1)^{\sim}=y^{\sim}$. 
Obviously, $1^{\star} \cdot^{\star} \top=\top \cdot^{\star} 1^{\star}=\top$, $1^{\star} \cdot^{\star} \bot=\bot \cdot^{\star} 1^{\star}=\bot$, and $1^{\star} \cdot^{\star} x=x \cdot^{\star} 1^{\star}=x$, for all $x \in A$. 
Thus $1^{\star}$ is the unit element of $(A^{\star},\cdot^{\star})$. 
Next, we show that the multiplication $\cdot^{\star}$ is compatible with the order $\leq^{\star}$, i.e., $x \leq^{\star} y$ implies $x \cdot^{\star}z \leq^{\star} y \cdot^{\star} z$ and $z \cdot^{\star} x \leq^{\star} z \cdot^{\star} y$, for any $x,y,z \in A^{\star}$. 
\begin{itemize}
	\item If $x=\bot$, then we have $\bot \cdot^{\star} z=\bot\leq^{\star}y \cdot^{\star} z$ and $z \cdot^{\star} \bot=\bot \leq^{\star} z \cdot^{\star} y$. 
	\item If $x \in A$, then obviously $x \not\leq^{\star}\bot$. 
	Hence we consider the remaining possibilities:
	\begin{enumerate}
		\item If $y \in A$, there are the following additional possibilities:
		\begin{enumerate}
			\item If $z \in A$, obviously $x \cdot^{\star} z \leq^{\star} y \cdot^{\star} z$ and $z \cdot^{\star} x \leq^{\star} z \cdot^{\star} y$.
			\item If $z \in A^{\sim}$, i.e., $z=w^{\sim}$ for some $w \in A$, then $x \cdot^{\star} w^{\sim}=(w/x)^{\sim} \leq^{\star} (w/y)^{\sim}=y \cdot^{\star} w^{\sim}$, and $w^{\sim} \cdot^{\star}x = (x \backslash w)^{\sim} \leq^{\star} (y \backslash w)^{\sim}=w^{\sim} \cdot^{\star} y$. 
			\item If $z \in \{\top,\bot\}$, then $x \cdot^{\star} z=z=y \cdot^{\star} z$, and $z \cdot^{\star} x=z=z\cdot^{\star} y$.
		\end{enumerate}
		\item If $y \in A^{\sim}$, i.e., $y=a^{\sim}$ for some $a \in A$, there are the following additional possibilities: 
		\begin{enumerate}
			\item If $z \in A$, then $x \cdot^{\star} z =x \cdot z<^{\star} (z\backslash a)^{\sim}=a^{\sim} \cdot^{\star} z$ and $z \cdot^{\star} x=z \cdot x <^{\star} (a/z)^{\sim}=z \cdot^{\star} a^{\sim}$. 
			\item If $z \in A^{\sim}$, i.e., $z=b^{\sim}$ for some $b \in A$, then $x \cdot^{\star} b^{\sim}=(b/x)^{\sim}<^{\star}\top=a^{\sim} \cdot^{\star} b^{\sim}$ and $b^{\sim} \cdot^{\star} x=(x \backslash b)^{\sim}<^{\star}\top=b^{\sim} \cdot^{\star} a^{\sim}$.
			\item If $z \in \{\top,\bot\}$, then $x \cdot^{\star} z=z=a^{\sim} \cdot^{\star} z$ and $z \cdot^{\star} x=z=z \cdot ^{\star} a^{\sim}$. 	
		\end{enumerate}
		\item If $y=\top$, there are the following two possibilities:
		\begin{enumerate}
			\item If $z=\bot$, then $x \cdot^{\star} \bot=\bot=\top \cdot^{\star} \bot$ and $\bot \cdot^{\star} x=\bot=\bot \cdot^{\star} \top$. 
			\item If $z\not=\bot$, then $x \cdot^{\star} z \leq^{\star} \top=\top \cdot^{\star} z$ and $z \cdot^{\star} x \leq^{\star} \top=z \cdot^{\star} \top$.
		\end{enumerate} 
	\end{enumerate}
	\item If $x \in A^{\sim}$, i.e., $x=a^{\sim}$ for some $a \in A$, clearly $x \not\leq^{\star} y$ for $y \in A \cup \{\bot\}$. Thus it suffices to consider the cases where $y \in A^{\sim} \cup \{\top\}$. 
	\begin{enumerate}
		\item If $y \in A^{\sim}$, i.e., $y=b^{\sim}$ for some $b \in A$, there are the following additional possibilities:
		\begin{enumerate}
			\item If $z \in A$, then $a^{\sim} \cdot^{\star} z=(z\backslash a)^{\sim} \leq^{\star} (z\backslash b)^{\sim}=b^{\sim} \cdot^{\star} z$ and $z \cdot^{\star} a^{\sim}=(a/z)^{\sim} \leq^{\star} (b/z)^{\sim}=z \cdot^{\star} b^{\sim}$, since $b \leq a$. 
			\item If $z \in A^{\sim} \cup \{\top\}$, then $a^{\sim} \cdot^{\star} z=\top=b^{\sim} \cdot^{\star} z$ and $z \cdot^{\star} a^{\sim}=\top=z \cdot^{\star} b^{\sim}$.
			\item If $z=\bot$, then $a^{\sim} \cdot^{\star} \bot=\bot=b^{\sim} \cdot^{\star} \bot$ and $\bot \cdot^{\star} a^{\sim}=\bot=\bot \cdot^{\star} b^{\sim}$.
		\end{enumerate}
		\item If $y=\top$, there are two additional possibilities:
		\begin{enumerate}
			\item If $z=\bot$, then $a^{\sim} \cdot^{\star} \bot=\bot=\top \cdot^{\star} \bot$ and $\bot \cdot^{\star} a^{\sim}=\bot=\bot \cdot^{\star} \top$. 
			\item If $z\not=\bot$, then $a^{\sim} \cdot^{\star} z \leq^{\star}\top=\top \cdot^{\star} z$ and $z \cdot^{\star} a^{\sim} \leq^{\star} \top=z \cdot^{\star} \top$.
		\end{enumerate}
	\end{enumerate}
	\item If $x=\top$, then it suffices to consider the case where $y=\top$. This case is trivial.
\end{itemize}
Thus $(A^{\star},\leq^{\star},\cdot^{\star},1^{\star})$ is a partially-ordered unital groupoid. 
To show that $(A^{\star},\leq^{\star},\cdot^{\star},1^{\star})$ is lattice-ordered, it suffices to show that $\inf\{a^{\sim},b^{\sim}\}$ and $\sup\{a^{\sim},b^{\sim}\}$ exist in $A^{\star}$ for any $a,b \in A$. 
We show that $\inf\{a^{\sim},b^{\sim}\}=(a\vee b)^{\sim}$. 
The element $(a \vee b)^{\sim}$ is a lower bound of $\{a^{\sim},b^{\sim}\}$, because $a \leq a \vee b$ iff $(a \lor b)^{\sim} \leq^{\star} a^{\sim}$ and $b \leq a \vee b$ iff $(a \lor b)^{\sim} \leq^{\star} b^{\sim}$. 
Let $x$ be a lower bound of $\{a^{\sim},b^{\sim}\}$; i.e., $x \leq^{\star} a^{\sim}$ and $x \leq^{\star} b^{\sim}$. 
We show that $x \leq^{\star} (a \vee b)^{\sim}$ by case analysis.
If $x \in A \cup \{\bot\}$, it is clear that $x <^{\star} (a \vee b)^{\sim}$. 
If $x \in A^{\sim}$, i.e., $x=y^{\sim}$ for some $y \in A$, then we have $a \vee b \leq y$, since $y^{\sim} \leq^{\star} a^{\sim}$ iff $a \leq y$ and $y^{\sim} \leq^{\star} b^{\sim}$ iff $b \leq y$.
Hence $y^{\sim} \leq^{\star} (a \vee b)^{\sim}$. 
Obviously, it is impossible that $x=\top$. 
Thus we have $\inf\{a^{\sim},b^{\sim}\}=(a\vee b)^{\sim}$. 
One checks $\sup\{a^{\sim},b^{\sim}\}=(a \wedge b)^{\sim}$ in a similar way.

To confirm that Statement (2) holds, clearly it suffices to show that $x \cdot^{\star} y \leq^{\star} z$ iff $\negr z \cdot^{\star} x \leq^{\star} \negr y$ holds for any $x,y,z \in A^{\star}$. 
Again, we perform a case-by-case analysis.
\begin{itemize}
	\item If $x=\bot$, then we have $\bot \cdot^{\star} y = \bot \leq^{\star} z$ and $\negr z \cdot^{\star} \bot =\bot \leq^{\star} \negr y$.
	\item If $x=\top$, we consider the following two possibilities:
	\begin{enumerate}
		\item If $y=\bot$, then we have $\top \cdot^{\star} \bot=\bot \leq^{\star} z$ and $\negr z \cdot^{\star} \top \leq^{\star} \top=\negr \bot$.
		\item If $y\not=\bot$, we consider the following two cases:
		\begin{enumerate}
			\item If $z\not=\top$, then $\top \cdot^{\star} y=\top \not \leq^{\star} z$ and $\negr z \cdot^{\star} \top=\top \not\leq^{\star} \negr y$. 
			\item If $z=\top$, then $\top \cdot^{\star} y \leq^{\star} \top$ and $\negr \top \cdot^{\star} \top=\bot \leq^{\star} \negr y$.
		\end{enumerate} 
	\end{enumerate} 
	\item If $x \in A$, then we consider the following possibilities:
	\begin{enumerate}
	    \item If $y \in A$, there are the following additional possibilities:
		\begin{enumerate}
			\item If $z \in A$, then $x \cdot^{\star} y \leq^{\star} z$ iff $x \cdot y \leq z$ iff $y \leq x \backslash z$ iff $(x \backslash z)^{\sim} \leq^{\star} y^{\sim}$ iff $z^{\sim} \cdot^{\star} x \leq^{\star} y^{\sim}$ iff $\negr z \cdot^{\star} x \leq^{\star} \negr y$. 
			\item If $z \in A^{\sim}$, i.e., $z=a^{\sim}$ for some $a \in A$, then $x \cdot^{\star} y =x \cdot y<^{\star} a^{\sim}$ and $\negr a^{\sim} \cdot^{\star} x=a \cdot^{\star} x <^{\star} y^{\sim}=\negr y$. 
			\item If $z=\top$, then $x \cdot^{\star}y=x \cdot y <^{\star} \top$ and $\negr \top \cdot^{\star} x=\bot<^{\star} y^{\sim}=\negr y$.
			\item If $z=\bot$, then $x \cdot^{\star} y=x \cdot y \not\leq^{\star} \bot$ and $\negr \bot \cdot^{\star} x=\top \not\leq^{\star} y^{\sim}=\negr y$.
		\end{enumerate}
	    \item If $y \in A^{\sim}$, i.e., $y=a^{\sim}$ for some $a \in A$, there are the following possibilities:
	    \begin{enumerate}
	    	\item If $z \in A \cup \{\bot\}$, then $x \cdot^{\star} a^{\sim}=(a/x)^{\sim} \not\leq^{\star} z$ and $\negr z \cdot^{\star} x \not\leq^{\star} a=\negr a^{\sim}$.
	    	\item If $z \in A^{\sim}$, i.e., $z=b^{\sim}$ for some $b \in A$, then $x \cdot^{\star} a^{\sim} \leq^{\star} b^{\sim}$ iff $(a/x)^{\sim} \leq^{\star} b^{\sim}$ iff $b \leq a/x $ iff $b \cdot x \leq a$ iff $b \cdot^{\star} x \leq^{\star} a$ iff $\negr b^{\sim} \cdot^{\star} x \leq^{\star} \negr a^{\sim}$. 
	    	\item If $z=\top$, we have $x \cdot^{\star} a^{\sim}=(a/x)^{\sim} <^{\star} \top$ and $\negr \top \cdot^{\star} x=\bot <^{\star} a=\negr a^{\sim}$. 
	    \end{enumerate}
        \item If $y=\bot$, then we have $x \cdot^{\star} \bot=\bot \leq^{\star} z$ and $\negr z \cdot^{\star} x \leq^{\star} \top=\negr \bot$. 
        \item If $y=\top$, then we consider the following two cases: 
        \begin{enumerate}
        	\item If $z\not=\top$, then $x \cdot^{\star} \top=\top \not\leq^{\star} z$ and $\negr z \cdot^{\star} x \not\leq^{\star} \bot=\negr \top$. 
        	\item If $z=\top$, then $x \cdot^{\star} \top \leq^{\star} \top$ and $\negr \top \cdot^{\star} x=\bot \leq^{\star} \negr \top$.  
        \end{enumerate}
	\end{enumerate}
\item If $x \in A^{\sim}$, i.e., $x=a^{\sim}$ for some $a \in A$, then we consider the following cases:
\begin{enumerate}
	\item If $y \in A$, then there are the following possibilities:
	\begin{enumerate}
		\item If $z \in A \cup \{\bot\}$, then $a^{\sim} \cdot^{\star} y=(y\backslash a)^{\sim} \not\leq^{\star} z$ and $\negr z \cdot^{\star} a^{\sim}=\top \not\leq^{\star} y^{\sim}=\negr y$.
		\item If $z \in A^{\sim}$, i.e., $z=b^{\sim}$ for some $b \in A$, then $a^{\sim} \cdot^{\star} y \leq^{\star} b^{\sim}$ iff $(y \backslash a)^{\sim} \leq^{\star} b^{\sim}$ iff $b \leq y \backslash a$ iff $y \cdot b \leq a$ iff $y \leq a/b$ iff $(a/b)^{\sim} \leq^{\star} y^{\sim}$ iff $b \cdot^{\star} a^{\sim} \leq^{\star} y^{\sim}$ iff $\negr b^{\sim} \cdot^{\star} a^{\sim} \leq^{\star} \negr y$. 
		\item If $z=\top$, then we have $a^{\sim} \cdot^{\star} y=(y\backslash a)^{\sim}<^{\star} \top$ and $\negr \top \cdot^{\star} a^{\sim}=\bot <^{\star} y^{\sim}=\negr y$.
	\end{enumerate}
    \item If $y \in A^{\sim}$, i.e., $y=b^{\sim}$ for some $b \in A$, then we consider the following two possibilities: 
    \begin{enumerate}
    	\item If $z \not=\top$, then we have $a^{\sim} \cdot^{\star} b^{\sim}=\top \not\leq^{\star} z$, and $\negr z \cdot^{\star} a^{\sim} \not\leq^{\star} \negr b^{\sim}$, since if $z \in A^{\sim}$, i.e., $z=c^{\sim}$ for some $c \in A$, then $\negr z \cdot^{\star} a^{\sim}=c \cdot^{\star} a^{\sim}=(a/c)^{\sim}\not\leq^{\star} b=\negr b^{\sim}$, and if $z \in A \cup \{\bot\}$, then $\negr z \cdot^{\star} a^{\sim}=\top \not\leq^{\star} b=\negr b^{\sim}$.
    	\item If $z=\top$, then we have $a^{\sim} \cdot^{\star} b^{\sim}=\top \leq^{\star} \top$ and $\negr \top \cdot^{\star} a^{\sim}=\bot<^{\star} b=\negr b^{\sim}$. 
    \end{enumerate}
    \item If $y=\top$, there are the following two possibilities:
    \begin{enumerate}
    	\item If $z\not=\top$, then $a^{\sim} \cdot^{\star} \top=\top \not\leq^{\star} z$ and clearly $\negr z \cdot^{\star} a^{\sim} \not\leq^{\star} \bot=\negr \top$, since if $z \in A \cup \{\bot\}$ then $\negr z \cdot^{\star} a^{\sim}=\top \not\leq^{\star} \bot=\negr \top$ and, if $z \in A^{\sim}$, i.e., $z=b^{\sim}$ for some $b \in A$, then $\negr z \cdot^{\star} a^{\sim}=b \cdot^{\star} a^{\sim}=(a/b)^{\sim} \not\leq^{\star} \bot=\negr \top$. 
    	\item If $z = \top$, then obviously $a^{\sim} \cdot^{\star} \top \leq^{\star} \top$ and $\negr \top \cdot^{\star} a^{\sim}=\bot \leq^{\star} \negr \top$. 
    \end{enumerate} 
    \item If $y=\bot$, then we have $a^{\sim} \cdot^{\star} \bot=\bot \leq^{\star} z$ and $\negr z \cdot^{\star} a^{\sim} \leq^{\star} \top=\negr \bot$.
\end{enumerate}
\end{itemize}
\end{proof}

\end{document}